\documentclass[10pt]{amsart}

\usepackage{amssymb}
\usepackage{bbm}
\usepackage{enumerate, xspace}

\let\mathds\mathbf
\usepackage{hyperref}


\newcommand{\one}{{\mathds{1}}}

\let\epsilon\varepsilon

\newcommand\phistar{\phi_{x_0}}  

\newcommand\ds{\displaystyle}

\newcommand\var{{\rm Var}}

\newcommand\dto{\, {\stackrel{d}{\to} \,}}



\setlength{\hoffset}{-.75in}
\setlength{\textwidth}{6.5in}

\hfuzz=15pt

\uchyph=0


\numberwithin{equation}{section}

\newtheorem{thm}{Theorem}[section]
\newtheorem{lem}[thm]{Lemma}
\newtheorem{cor}[thm]{Corollary}

\newtheorem{prop}[thm]{Proposition}

\newtheorem{defin}[thm]{Definition}

\newtheorem{exmp}[thm]{Example}
\newtheorem{rem}[thm]{Remark}

\newcommand\cA{{\mathcal A}}
\newcommand\cB{{\mathcal B}}

\newcommand\cD{{\mathcal D}}
\newcommand\cE{{\mathcal E}}
\newcommand\cF{{\mathcal F}}

\newcommand\cJ{{\mathcal J}}

\newcommand\cO{{\mathcal O}}

\newcommand\cR{{\mathcal R}}
\newcommand\cS{{\mathcal S}}
\newcommand\cT{{\mathcal T}}


\newcommand\bD{{\mathbb D}}
\newcommand\bE{{\mathbb E}}

\newcommand\bP{{\mathbb P}}

\newcommand\bR{{\mathbb R}}

\newcommand\bZ{{\mathbb Z}}
\newcommand{\Z}{\mathbb{Z}}
\newcommand{\R}{\mathbb{R}}

\begin{document}

\date{\today.}

\title{Stable laws for random dynamical systems}

\author[R. Aimino]{Romain Aimino}
\address{Romain Aimino\\
Departamento de Matem\'atica\\
Faculdade de Ci\^encias da Universidade do Porto\\
Rua do Campo Alegre, 687, 4169-007 Porto, Portugal.}
\email{{\tt romain.aimino@fc.up.pt}}
\urladdr{https://www.fc.up.pt/pessoas/romain.aimino/}

\author[M. Nicol]{Matthew Nicol}
\address{Matthew Nicol\\ Department of Mathematics\\
University of Houston\\
Houston\\
TX 77204\\
USA} \email{nicol@math.uh.edu}
\urladdr{http://www.math.uh.edu/~nicol/}

\author[A. T\"or\"ok]{Andrew T\"or\"ok}
\address{Andrew T\"or\"ok\\ Department of Mathematics\\
  University of Houston\\
  Houston\\
  TX 77204\\
  USA and 
{Institute of Mathematics of the Romanian Academy, Bucharest, Romania.}}
\email{torok@math.uh.edu}
\urladdr{http://www.math.uh.edu/~torok/}

\thanks{RA was partially supported by FCT project PTDC/MAT-PUR/4048/2021,
  with national funds, and by CMUP, which is financed by national funds
  through FCT -- Funda\c{c}\~ao para a Ci\^encia e a Tecnologia, I.P.,
  under the project with reference UIDB/00144/2020. MN was supported in
  part by NSF Grants DMS 1600780 and DMS 2009923. AT was supported in part
  by NSF Grant DMS 1816315. RA would like to thank Jorge Freitas for
  several very insightful discussions about return time statistics and
  point processes. We wish to thank an anonymous referee for helpful comments.
  Data sharing not applicable to this article as no datasets were generated or analysed during the current study.}

\keywords{Stable Limit Laws, Random Dynamical Systems, Poisson Limit Laws.} 

\subjclass[2010]{ 37A50, 37H99,  60F05, 60G51,60G55.}

\begin{abstract}

  In this paper we consider random dynamical systems formed by
  concatenating maps acting on the unit interval $[0,1]$ in an iid fashion.
  Considered as a stationary Markov process, the random dynamical system
  possesses a unique stationary measure $\nu$. We consider a class of
  non square-integrable observables $\phi$, mostly of form
  $\phi(x)=d(x,x_0)^{-\frac{1}{\alpha}}$ where $x_0$ is a non-recurrent point (in particular a non-periodic point)
  satisfying some other genericity conditions, and more generally regularly
  varying observables with index $\alpha \in (0,2)$. The two types of maps we concatenate are a
  class of piecewise $C^2$ expanding maps, and a class of intermittent maps
  possessing an indifferent fixed point at the origin. Under conditions on
  the dynamics and $\alpha$ we establish Poisson limit laws, convergence of
  scaled Birkhoff sums to a stable limit law and functional stable limit
  laws, in both the annealed and quenched case. The scaling constants for
  the limit laws for almost every quenched realization are the same as
  those of the annealed case and determined by $\nu$. This is in contrast
  to the scalings in quenched central limit theorems where the centering
  constants depend in a critical way upon the realization and are not the
  same for almost every realization.

\end{abstract}

\maketitle

\tableofcontents


%

\maketitle

\section{Introduction}\label{sec:intro}

In this paper we consider non square-integrable observables $\phi: [0,1]\to \bR$
on two simple classes of random dynamical system. One consists of randomly
choosing in an iid manner from a finite set of maps which are strictly
polynomially mixing with an indifferent fixed point at the origin, the
other consisting of randomly choosing from a finite set of maps which are
uniformly expanding and exponentially mixing. The main type of observable
we consider is of the form $\phi (x) = |x-x_0|^{-\frac{1}{\alpha}}$,
$\alpha\in (0,2)$ which in the IID case lies in the domain of attraction of
a stable law of index $\alpha$. For certain results the point $x_0$ has to
satisfy some nongenericity conditions and in particular not be a
periodic point for almost every realization of the random system (see
Definition~\ref{def:preperiodic}). Some of our results, particularly those involving convergence to
exponential and Poisson laws hold for general observables
that are regularly varying with index $\alpha$.

The settings for investigations on   stable limit laws for observables on dynamical systems tend to be of two broad types:  (1) ``good observables'' (typically H\"older) on 
slowly mixing non-uniformly hyperbolic systems;  and  (2) ``bad'' observables (unbounded with fat tails) on fast mixing dynamical systems. 
As illustrative examples of both settings we give two results.

\noindent {\bf Example of (1):} 
The LSV intermittent map $T_{\gamma}: [0,1]\to [0,1]$, $\gamma \in (0,1)$,
is defined by
\[
  T_{\gamma} (x)  = \left\{ \begin{array}{ll}
         x(1+2^{\gamma}x^{\gamma}) & \mbox{if $0 \leq x\le \frac{1}{2}$};\\
        2x-1 & \mbox{if $\frac{1}{2}  <x < 1$}.\end{array} \right. 
\] 
        
The map $T_{\gamma}$ has a unique absolutely continuous invariant measure
$\mu_{\gamma}$.
        
Gou\"ezel~\cite[Theorem 1.3]{Gouezel_Intermittent} showed that if $\gamma >\frac{1}{2}$ and
$\phi: [0,1]\to \bR$ is H\"older continuous with $\phi (0)\not =0$,
$E_{\mu_{\gamma}}(\phi)=0$ then for $\alpha=\frac{1}{\gamma}$
\[
  \frac{1}{bn^{\frac{1}{\alpha}} }\sum_{j=0}^{n-1} \phi \circ T^j
  \to^{d} X_{\alpha,\beta}
\] 
($\beta$ has a complicated expression).

 \noindent {\bf Example of (2):} Gou\"ezel~\cite{G1}[Theorem 2.1] showed
that if $T:[0,1] \to [0,1]$ is the doubling map $T(x)=2x$ (mod $1$) with
invariant measure $m$, Lebesgue, and $\phi (x) =x^{-{\frac{1}{\alpha}}}$,
$\alpha \in (0,2)$ then there exists a sequence $c_n$ such that
\[
  \frac{2^{\frac{1}{\alpha}}-1}{n^{\frac{1}{\alpha}}} \sum_{j=0}^{n-1} \phi
  \circ T^j -c_n \to^{d} X_{\alpha,1}
\]

For further results on the 
first type we refer to the influential papers~\cite{Gouezel_Intermittent,Gouezel_Skew} and~\cite{Melbourne_Zweimuller}. In the setting of ``good observables'' (typically H\"older) on  slowly mixing non-uniformly hyperbolic systems the technique of inducing on a subset of phase space and 
constructing a Young Tower has been used with some  success. ``Good'' observables lift to well-behaved observables lying in a suitable 
Banach space on the Young Tower. This is not the case with unbounded observables with fat tails, though in~\cite{Gouezel_Intermittent} the induction technique
allows an observable to be unbounded at the fixed point in a family of intermittent maps.

For further results on the second type we refer to the papers by Marta
Tyran-Kaminska~\cite{TK,TK-dynamical}. In the setting of Gibbs-Markov maps
she shows, among other results, that functions which are measurable with
respect to the Gibbs-Markov partition and in the domain of attraction of a
stable law with index $\alpha$ converge (under the appropriate scaling) in
the $J_1$ topology to a L\'evy process of index $\alpha$~\cite[Theorem 3.3,
Corollaries 4.1 and 4.2]{TK-dynamical}.

 For recent results on limit laws, though not stable laws,
in the setting of skew-products with an ergodic base map and uniformly hyperbolic fiber maps see also~\cite{Froyland1, Froyland2}. For a still very useful survey of techniques and ideas in random dynamical systems we refer to~\cite{Kifer_1998}.

Our main results are given in Section 2.  An introduction to stable laws and a discussion of modes
of convergence is given in Sections 3 and 4. The Poisson point approach and its application to 
our random setting is detailed in Section 5. Results on convergence of return times to an exponential law
and our point processes to a Poisson process are given in Section 6 (though the proofs of these
results are delayed until sections 8.1, 8.2, 9.1 and 9.2).  The proofs of the main results are given in Section 10. We conclude in Section 11 with 
results on stable laws for the corresponding  annealed systems.

\section{Main Results}\label{Main_Results}

For the sake of concreteness, we restrict  ourselves to observables of
  the form 
\begin{equation}\label{eq:phi-star}
  \phistar(x) = |x - x_0|^{ - \frac{1}{\alpha}}, \: x \in [0,1].
\end{equation}
where $x_0$ is a non-recurrent point (see Definition~\ref{def:preperiodic}) and $\alpha \in (0,2)$
 but it is possible
  to consider more general regularly varying observables $\phi$ which are piecewise monotonic
  with finitely many branches, see for instance \cite[Section
  4.2]{TK-dynamical} in the deterministic case. 
 Note that $\phistar$ is regularity varying with index $\alpha$.

 We will be considering the following set-up, with $(\Omega, \sigma)$ the
full two-sided shift on finitely many symbols. In most of our settings we take $Y=[0,1]$.

Let $\sigma : \Omega \to \Omega$ be an invertible ergodic
measure-preserving transformation on a probability space
$(\Omega, \cF, \bP)$. For a measurable space $(Y, \cB)$, let $\sigma:\Omega \to \Omega$
be the usual full shift and define 
\[
F : \Omega \times Y  \to  \Omega \times Y 
 \]
by
\[
F(\omega, x) =(\sigma \omega, T_\omega (x)) 
\]
We assume $F$ preserves a probability measure $\nu$ on $\Omega \times Y$. We assume
that $\nu$ admits a disintegration given by
$\nu (d \omega, dx) = \bP(d \omega) \nu^\omega(dx)$. For all $n \ge 1$,
we have
\[
F^n(\omega, x) = (\sigma^n \omega, T_\omega^n x),
\]
where
\begin{equation*} 
  T_\omega^n = T_{\sigma^{n-1} \omega} \circ \ldots \circ T_\omega,
\end{equation*}
which satisfies the equivariance relations
$(T_\omega^n)_{*} \nu^\omega = \nu^{\sigma^n \omega}$ for $\bP$-a.e.
$\omega \in \Omega$.

For each $\omega \in \Omega$, we denote by $P_\omega$ the transfer operator of
$T_\omega$ with respect to the Lebesgue measure $m$: for all
$\phi \in L^\infty(m)$ and $\psi \in L^1(m)$,
\[
  \int_{[0,1]} (\phi \circ T_\omega) \cdot \psi \, dm = \int_{[0,1]} \phi
  \cdot P_\omega \psi \, dm.
\]
We can then form, for $\omega \in \Omega$ and $n \ge 1$, the cocycle
\[
P_\omega^n = P_{\sigma^{n-1} \omega} \circ \ldots \circ P_\omega.
\]

\begin{defin}[scaling constants]\label{def:scaling-constants}
  We consider a sequence $(b_n)_{n \ge 1}$ of positive real numbers such
  that
  \begin{equation}\label{eqn:tail1}
    \lim_{n \to \infty} n \nu( \phistar  > b_n ) = 1.
  \end{equation}
\end{defin}

\begin{defin}[centering constants]\label{def:centering-constants}
  
  We define the centering sequence $(c_n)_{n \ge 1}$ by
    \[
    c_n =
    \begin{cases}
      0 &\text{if $\alpha \in (0,1)$}\cr n \mathbb{E}_\nu(\phistar
      \one_{\left\{ \phistar  \le b_n\right\}}) & \text{if
        $\alpha = 1$} \cr n \mathbb{E }_\nu(\phistar) & \text{if
        $\alpha \in(1,2)$}
    \end{cases}.
  \]
\end{defin}

 We now introduce two classes of random dynamical system (RDS) for which
 we are able to establish stable limit laws.  
 
 \subsection{Random uniformly expanding maps}\label{sec:expanding_iid}
 
 We consider random i.i.d. compositions 
with additional assumptions of uniform expansion. Let $\cS$ be a finite
collection of $m$ piecewise $C^2$ uniformly expanding maps of the unit
interval $[0,1]$. More precisely, we assume that for each $T \in \cS$,
there exist a finite partition $\cA_T$ of $[0,1]$ into intervals, such that
for each $I \in \cA_T$, $T$ can be continuously extended as a strictly
monotonic $C^2$ function on $\bar{I}$ and
\[
\lambda := \inf_{I \in \cA_T} \inf_{x \in \bar{I}} |T'(x)| > 1.
\]

The maps $T_\omega$ (determined by the $0$-th
coordinate of $\omega$) are chosen from $\cS$ in an i.i.d. fashion
according to a Bernoulli probability measure $\bP$ on
$\Omega := \left\{1, \ldots, m\right\}^{\bZ}$. We will denote by
$\cA_\omega$ the partition of monotonicity of $T_\omega$, and by
$\cA_\omega^n = \vee_{k=0}^{n-1} (T_\omega^k)^{-1}(\cA_{\sigma^k \omega})$
the partition associated to $T_\omega^n$. We introduce
\[
  \cD = \cup_{n \ge 0} \cup_{\omega \in \Omega} \partial \cA_\omega^n
\]
the set of discontinuities of all the maps $T_\omega^n$. Note that $\cD$ is at most a countable set.

 In the uniformly expanding case we also assume the conditions 
 (LY),(Dec) and (Min). (LY) is the usual Lasota-Yorke inequality while (Dec) and (Min) were introduced by Conze and Raugi \cite{CR07}.

\begin{description}
\item[(LY)]
  there exist $r \ge 1$, $M>0$ and $D > 0$ and $\rho \in (0,1)$
  such that for all $\omega \in \Omega$ and all $f \in {\rm BV}$,
  \[
    \|P_\omega f\|_{\rm BV} \le M \|f\|_{\rm BV},
  \]
  and
  \[
    \var(P_\omega^r f) \le \rho \var(f) + D \|f\|_{L^1(m)}.
  \]
\end{description}



\begin{description}
\item[(Dec)] there exists $C >0$ and $\theta \in (0,1)$ such that for all $n \ge 1$, all $\omega \in  \Omega$ and all $f \in {\rm BV}$ with $\bE_m(f) = 0$:
\[
\| P_\omega^n f \|_{\rm BV} \le C \theta^n \|f\|_{\rm BV}
\]
\end{description}
\begin{description}
\item[(Min)] there exists $c>0$ such that for all $n\ge 1$ and all $\omega \in \Omega$,
\[
  \inf_{x \in [0,1]} (P_\omega^n \one)(x) \ge c>0.
\] 
\end{description}

\begin{defin}\label{def:preperiodic}
  We say that $x_0$ is non-recurrent if $x_0$  satisfies the condition $T^n_{\omega} (x_0)\not =x_0$ for all $n\ge1$ for $\bP$-a.e.  $\omega \in \Omega$. 
  \end{defin}

\begin{thm}\label{thm:expanding}
    In the setting of exapnding maps assume  (LY),  (Min) and (Dec).
    Suppose that $x_0 \notin \cD$ is non-recurrent and consider the observable $\phistar$.

    If $\alpha \in (0,1)$ then for $\bP$-a.e.~$\omega \in \Omega$, the
    Functional Stable Limit holds:
    \[
      X_n^\omega(t) := \frac{1}{b_n} [\sum_{j=0}^{\lfloor nt \rfloor - 1 } \phistar \circ
      T_\omega^j - t c_n] \: \dto X_{(\alpha)}(t) \text{ \quad in \quad }
      \bD[0, \infty)
    \]
      in the $J_1$ topology 
    under the probability measure $\nu^\omega$, where $X_{(\alpha)}(t)$ is
    the $\alpha$-stable process with L\'evy measure
    $d\Pi_\alpha(dx) = \alpha |x|^{-(\alpha + 1)}$ on $[0,\infty)$.

    If $\alpha \in [1,2)$ then the same result holds for 
    $m$-a.e.~$x_0$.
\end{thm}

\begin{exmp}[$\beta$-transformations]\label{beta}
  A simple example of a class of maps satisfying (LY), (Dec) and (Min)~\cite{CR07}  is to take $m$ $\beta$-maps of the unit interval,
  $T_{\beta_i } (x)=\beta_i x$ (mod $1$). We suppose $\beta_i >1+a$, $a>0$,
  for all $\beta_i$, $i=1,\ldots,m$.
  \end{exmp}

\subsection{Random intermittent maps}\label{intermittent}

Now we consider a simple class of intermittent type maps.

  Liverani, Saussol and Vaienti \cite{LivSauVai} introduced the map $T_{\gamma}$ as a
  simple model for intermittent dynamics:
  \[
    \text{$T_{\gamma}:[0,1] \to [0,1]$, \qquad } T_{\gamma} (x) := \left\{
      \begin{array}{ll}
         (2^{\gamma}x^{\gamma} +1)x& \mbox{if $0 \leq x<\frac{1}{2}$};\\
        2x-1 & \mbox{if $\frac{1}{2} \le x \le 1$}.
      \end{array} \right.
  \]
  If $0\le \gamma <1$ then $T_{\gamma}$ has an absolutely continuous
  invariant measure $\mu_{\gamma}$ with density $h_{\gamma}$ bounded away
  from zero and satisfying $h_{\gamma} (x)\sim C x^{-\gamma}$ for $x$ near
  zero.
 
  We form a random dynamical system by selecting $\gamma_i\in (0,1)$,
  $i=1,\ldots,m$ in an iid fashion and setting $T_i:=T_{\gamma_i}$.
  The associated Markov process on $[0,1]$ has a stationary invariant
  measure $\nu$ which is absolutely continuous, with density $h$ bounded
  away from zero.

  We denote $\gamma_{max}:=\max_{1\le i \le m} \{\gamma_i\}$ and
  $\gamma_{min}:=\min_{1\le i \le m} \{\gamma_i\}$.


\begin{thm}\label{thm:intermittent}
  In the setting of iid random composition of intermittent maps suppose $\alpha \in (0,1)$
  and $\gamma_{max}<\frac{1}{3}$. Then, for $m$-a.e.~$x_0$
  $\frac{1}{b_n}\sum_{j=0}^{n-1}\phistar\circ T_{\omega}^j \dto
  X_{(\alpha)}(1)$ under the probability measure $\nu^{\omega}$ for
  $\bP$-a.e.~$\omega$ (recall that $c_n=0$ for $\alpha \in (0,1)$).
\end{thm}

\begin{rem}[Convergence with respect to Lebesgue measure] We state our
  limiting theorems with respect to the fiberwise measures $\nu^{\omega}$
  but by general results of Eagleson~\cite{Eagleson}(see
  also~\cite{Zweimuller}) the convergence holds with respect to any measure
  $\mu$ for which $\mu \ll \nu^{\omega}$, in particular our convergence
  results hold with respect to Lebesgue measure $m$. Further details are
  given in the Appendix.
\end{rem}

 Our proofs are based on a Poisson process approach developed for dynamical systems by Marta Tyran-Kaminska~\cite{TK, TK-dynamical}.

\section{Probabilistic tools}\label{sec:tools}

In this section, we review some topics from Probability Theory.

\subsection{Regularly varying functions and domains of
  attraction}\label{ssec:regu}

We refer to
Feller~\cite{Fel71} or Bingham, Goldie and
Teugels~\cite{Bingham-Goldie-Teugels-1987}  for the relations between domains of attraction of
stable laws and regularly varying functions.  For $\phi$ regularly varying  we define the constants $b_n$ and 
$c_n$ as in the case of $\phi_{x_0}$.

\begin{rem}\label{rem:centering-constants}
  When $\alpha \in (0,1)$ then $\phi$ is not integrable and one can choose
  the centering sequence $(c_n)$ to be identically $0$. When $\alpha = 1$,
  it might happen that $\phi$ is not integrable, and it is then necessary
  to define $c_n$ with suitably truncated moments as above. If
  $\phi$ is integrable then center by  $c_n = n\bE_\nu(\phi)$.
\end{rem}

We will use the following asymptotics for truncated moments, which can be
deduced from Karamata's results concerning the tail behavior of regularly
varying functions.  Define $p$ by $ \lim_{x \to \infty}
    \frac{\nu(\phi>x)}{\nu(|\phi|>x)} = p$.

\begin{prop}[Karamata]\label{prop:karamata}

  Let $\phi$ be regularly varying with index $\alpha \in (0,2)$. Then,
  setting $\beta := 2p -1$  and, for $\epsilon > 0$,
  \begin{equation}\label{eq:c_alpha}
    c_\alpha(\epsilon) :=
    \begin{cases}
      0 &\text{if $\alpha \in (0,1)$}\cr
      - \beta \log \epsilon & \text{if $\alpha = 1$} \cr
      \epsilon^{1 - \alpha} \beta \alpha / (\alpha - 1) & \text{if
        $\alpha \in(1,2)$}
    \end{cases}
  \end{equation}
  the following hold for all $\epsilon>0$:
\begin{enumerate}[(a)]
\item $\displaystyle \bE_\nu(| \phi|^2 \one_{\left\{ | \phi | \le \epsilon b_n \right\}}) \sim \frac{\alpha}{2 - \alpha} (\epsilon b_n)^2 \nu(| \phi | > \epsilon b_n),$
\item if $\alpha \in (0,1)$, $$\bE_\nu(| \phi| \one_{\left\{ | \phi | \le \epsilon b_n \right\}}) \sim \frac{\alpha}{1 - \alpha} \epsilon b_n \nu(| \phi | > \epsilon b_n),$$
\item if $\alpha \in (1,2)$, $$\lim_{n \to \infty} \frac{n}{b_n} \bE_\nu(\phi \one_{\{ | \phi | > \epsilon b_n \}}) = c_\alpha(\epsilon),$$
\item if $\alpha = 1$, $$\lim_{n \to \infty} \frac{n}{b_n} \bE_\nu(\phi \one_{\{  \epsilon b_n < |\phi | \le  b_n \}}) = c_\alpha(\epsilon),$$
\item if $\alpha = 1$, $$\frac{n}{b_n} \bE_\nu(|\phi| \one_{ \{  | \phi | \le \epsilon b_n \}}) \sim \widetilde{L}(n),$$ for a slowly varying function $\widetilde{L}$,
\end{enumerate}
\end{prop}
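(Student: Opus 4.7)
The plan is to reduce every statement to Karamata's theorem on asymptotic integration of regularly varying functions, applied to the survival functions $\nu(|\phi|>t)\sim t^{-\alpha}L(t)$, $\nu(\phi>t)\sim p\,t^{-\alpha}L(t)$, and $\nu(-\phi>t)\sim(1-p)t^{-\alpha}L(t)$. The unifying tool is the layer-cake identity
\[
\bE_\nu(|\phi|^k\one_{\{|\phi|\le M\}}) = \int_0^M k t^{k-1}\nu(|\phi|>t)\,dt - M^k\nu(|\phi|>M),
\]
together with its signed analogue $\bE_\nu(\phi\one_{\{a<\phi\le b\}}) = a\nu(\phi>a) - b\nu(\phi>b) + \int_a^b \nu(\phi>t)\,dt$; the hypotheses on $\phi$ enter only through \eqref{eqn:regular}, \eqref{eqn:tail1} and \eqref{eqn:tail2}.

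For (a) and (b) I would take $k=2$ or $k=1$ and $M=\epsilon b_n$: the integrand $kt^{k-1}\nu(|\phi|>t)$ is regularly varying of index $k-1-\alpha>-1$ under the relevant hypothesis, so the direct half of Karamata gives $\int_0^M kt^{k-1}\nu(|\phi|>t)\,dt \sim \frac{k}{k-\alpha}M^k\nu(|\phi|>M)$, and subtracting the boundary term yields the factors $\frac{\alpha}{2-\alpha}$ and $\frac{\alpha}{1-\alpha}$ respectively. For (c) I split $\phi = \phi_+ - \phi_-$ and apply the signed identity on $\{\pm\phi>\epsilon b_n\}$ with $b=\infty$ (the boundary term vanishes since $\alpha>1$); the integrand now has index $-\alpha<-1$, so the complementary half of Karamata yields $\int_{\epsilon b_n}^\infty\nu(\pm\phi>t)\,dt \sim \frac{\epsilon b_n\,\nu(\pm\phi>\epsilon b_n)}{\alpha-1}$, and multiplying by $n/b_n$ and using $n\nu(\phi>\epsilon b_n)\to p\epsilon^{-\alpha}$, $n\nu(-\phi>\epsilon b_n)\to(1-p)\epsilon^{-\alpha}$ produces $\frac{\alpha(2p-1)}{\alpha-1}\epsilon^{1-\alpha}=c_\alpha(\epsilon)$.

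The main obstacle is the borderline case $\alpha=1$ addressed in (d) and (e), where Karamata degenerates and a delicate cancellation must be tracked. For (d), the signed identity applied on $\{\epsilon b_n<\phi\le b_n\}$ gives
\[
\tfrac{n}{b_n}\bE_\nu(\phi\one_{\{\epsilon b_n<\phi\le b_n\}}) = \epsilon n\nu(\phi>\epsilon b_n) - n\nu(\phi>b_n) + \tfrac{n}{b_n}\int_{\epsilon b_n}^{b_n}\nu(\phi>t)\,dt.
\]
The first two terms both converge to $p$ and cancel; the uniform-convergence theorem for slow variation gives $L(t)\sim L(b_n)$ uniformly on $[\epsilon b_n,b_n]$, so the integral is asymptotic to $-pL(b_n)\log\epsilon$, and combined with $nL(b_n)/b_n\to 1$ this contributes $-p\log\epsilon$. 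The symmetric computation for $-\phi$ contributes $(1-p)\log\epsilon$, summing to $-\beta\log\epsilon = c_1(\epsilon)$. Finally, (e) follows from the $\rho=-1$ version of Karamata, which guarantees that $M\mapsto \int_0^M\nu(|\phi|>t)\,dt$ is slowly varying; since $b_n = n\widetilde L(n)$ when $\alpha=1$, the prefactor $n/b_n$ is itself slowly varying in $n$, and composing a slowly varying function with $b_n$ (regularly varying of index $1$) preserves slow variation, so $\frac{n}{b_n}\bE_\nu(|\phi|\one_{\{|\phi|\le\epsilon b_n\}})$ is slowly varying in $n$ as required.
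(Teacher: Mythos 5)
The paper states Proposition~\ref{prop:karamata} without proof, deferring to Feller and to Bingham--Goldie--Teugels; your derivation via the layer-cake identities $\bE_\nu(|\phi|^k\one_{\{|\phi|\le M\}}) = \int_0^M kt^{k-1}\nu(|\phi|>t)\,dt - M^k\nu(|\phi|>M)$ and the direct and complementary halves of Karamata's integration theorem is precisely the standard argument those sources contain. Parts (a)--(d) are correct as written: the index bookkeeping in (a)--(b) giving $\frac{k}{k-\alpha}-1 = \frac{\alpha}{k-\alpha}$ is right; in (c) the surviving boundary term $\epsilon b_n\nu(\pm\phi>\epsilon b_n)$ and the tail integral combine to give $1+\frac{1}{\alpha-1}=\frac{\alpha}{\alpha-1}$, and taking the signed difference of the two tails produces the factor $2p-1=\beta$; in (d) the cancellation $\epsilon n\nu(\phi>\epsilon b_n) - n\nu(\phi>b_n)\to p-p=0$, together with the uniform convergence theorem for slow variation applied to the compactly scaled integral $\int_{\epsilon b_n}^{b_n}\nu(\phi>t)\,dt$ and $nL(b_n)/b_n\to 1$, correctly produces $-\beta\log\epsilon$.

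The only place the argument is slightly short is (e). The layer-cake identity reads $\bE_\nu(|\phi|\one_{\{|\phi|\le\epsilon b_n\}}) = \int_0^{\epsilon b_n}\nu(|\phi|>t)\,dt - \epsilon b_n\nu(|\phi|>\epsilon b_n)$, and you explain only why $\frac{n}{b_n}\int_0^{\epsilon b_n}\nu(|\phi|>t)\,dt$ is slowly varying in $n$ (composing the slowly varying primitive with $b_n\in{\rm RV}(1)$ and dividing by the slowly varying $b_n/n$). But after multiplying by $n/b_n$, the subtracted boundary term converges to $1$, not $0$, and the difference of two slowly varying functions is not automatically slowly varying; you must also observe that $\frac{n}{b_n}\int_0^{\epsilon b_n}\nu(|\phi|>t)\,dt\to\infty$, so that subtracting a bounded quantity is asymptotically negligible. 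This is the companion Karamata fact that $\int_1^M t^{-1}L(t)\,dt\big/ L(M)\to\infty$ for $L$ slowly varying (see e.g.\ \cite{Bingham-Goldie-Teugels-1987}), combined with $L(\epsilon b_n)/L(b_n)\to 1$ and $nL(b_n)/b_n\to 1$; once this is added the proof of (e) is complete.
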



\subsection{L\'evy $\alpha$-stable processes}\label{ssec:stable}

A helpful and more detailed discussion can be found, e.g., in~\cite{TK, TK-dynamical}.

$X(t)$ is a L\'evy stable process if $X(0)=0$, $X$ has stationary independent increments and $X(1)$ has an 
$\alpha$-stable distribution.  

The L\'evy-Khintchine representation for the characteristic function of an
$\alpha$-stable random variable $X_{\alpha,\beta}$ with index
$\alpha \in (0,2)$ and parameter $\beta\in[-1,1]$ has the form:
\[
\bE[e^{itX}]=\mbox{exp}\left[ita_{\alpha}+\int (e^{itx}-1-itx1_{[-1,1]} (x))\Pi_{\alpha}(dx)\right]
\]
where 
\begin{itemize}
\item 

  $\ds
  a_{\alpha}= \left\{ \begin{array}{ll}   \beta\frac{\alpha}{1-\alpha} & \mbox{ $\alpha \not = 1$}\\
                        0 & \mbox{ $\alpha =1$}\end{array} \right.,$

\item  $\Pi_{\alpha}$ is a L\'evy measure given by
  \[
    d \Pi_{\alpha} =\alpha (p1_{(0,\infty)} (x)+(1-p)1_{(-\infty,0)} (x) )
    |x|^{-\alpha -1} dx
 \]

\item $ \ds p=\frac{\beta+1}{2}.$
\end{itemize}

Note that $p$ and $\beta$ may equally serve as parameters for $X_{\alpha,\beta}$. We will drop the $\beta$ from $X_{\alpha,\beta}$, as is common in the literature, for simplicity of notation and when it plays no essential role.

\subsection{Poisson point processes}\label{ssec:poisson_process}

Let $(T_n)_{n \ge 1}$ be a sequence of measurable transformations on a
probability space $(Y, \cB, \mu)$. For $n \ge 1$ we denote
\begin{equation}\label{eq.sequential-composition}
  T_1^n := T_n \circ \ldots \circ T_1.
\end{equation}
Given $\phi : Y \to \bR$ measurable, recall that we define the scaled
Birkhoff sum by
\begin{equation}\label{eq.sequential}
  S_n:= \frac{1}{b_n} [\sum_{j=0}^{ n - 1} \phi \circ
  T_1^j -  c_n],
\end{equation}
for some real constants $b_n > 0$, $c_n$
and the scaled random process $X_n(t)$, $n\ge 1$, by
\begin{equation}\label{eq.sequential}
  X_n(t) := \frac{1}{b_n} [\sum_{j=0}^{\lfloor nt \rfloor - 1} \phi \circ
  T_1^j - t c_n], \; t \ge 0,
\end{equation}

For $X_{\alpha}(t)$ a L\'evy $\alpha$-stable process and
$B\in \mathcal{B} ((0,\infty) \times (\bR \setminus \{0\}))$ define
\[
  N_{(\alpha)}(B) := \#\{s>0: (s, \Delta X_{\alpha}(s))\in B\}
\]
where $\Delta X_{\alpha}(t):=X_{\alpha}(t)-X_{\alpha}(t^-)$.

The random variable $N_{(\alpha)}(B)$, which counts the jumps (and their
time) of the L\'evy process that lie in $B$, is finite a.s. if and only if
$(m \times \Pi_\alpha) (B)< \infty$. In that case $N_{(\alpha)}(B)$ has a
Poisson distribution with mean $(m \times \Pi_\alpha) (B)$.

Similarly define
\[
  N_n(B) := \# \left\{ j \ge 1 \, : \, \left(\frac{j}{n} , \frac{\phi
\circ T_1^{j-1}}{b_n} \right) \in B \right\}, \; n \ge 1,
\]
$N_n(B)$ counts the jumps of the process~\eqref{eq.sequential} that lie in
$B$. When a realization $\omega \in \Omega $ is fixed we define
\[
N_n^\omega(B) := \# \left\{ j \ge 1 \, : \, \left(\frac{j}{n} , \frac{\phi
      \circ T_\omega^{j-1}}{b_n} \right) \in B \right\}, \; n \ge 1.
\]

\begin{defin}
  We say $N_n$ converges in distribution to  $N_{(\alpha)}$ and write
  \[
    N_n \dto N_{(\alpha)}
  \]
  if and only if $N_n(B) \dto N_{(\alpha)}(B)$ for all
  $B \in B((0, \infty) \times (\bR \setminus \{ 0 \}))$ with
  $(m \times \Pi_\alpha) (B) < \infty$ and
  $(m \times \Pi_\alpha ) (\partial B) = 0$.
\end{defin}

\section{Modes of Convergence}

Consider the process $X_{\alpha}$ determined by the observable
$\phi$ (that is, an iid version of $\phi$ which regularly varying with the same index $\alpha$ and
parameter $p$ ).
%
%
We are interested the following limits:

\begin{itemize}
\item [(A)] \textbf{Poisson point process convergence.}
  \[
    ~N^{\omega}_n \dto N_{(\alpha)}
  \]
  with respect to $\nu^{\omega}$ for $\bP$ a.e.~$\omega$ where $N_{(\alpha)}$
  is the Poisson point process of an $\alpha$-stable process with parameter
  determined by $\nu$, the annealed measure.


\item [(B)] \textbf{Stable law convergence.}
  \[
    ~S_n^{\omega}:=\frac{1}{b_n} [\sum_{j=0}^{n-1} \phi\circ T_{\omega}^j
    -c_n] \dto X_{\alpha}(1)
  \]
  for $\bP$-a.e.~$\omega$, with respect to $\nu^{\omega}$, for $\phi$
  regularly varying with index $\alpha$ and $X_{\alpha}(t)$ the
  corresponding $\alpha$-stable process,
  for suitable scaling and centering constants $b_n$ and $c_n$.
  
   \item [(C)] \textbf{Functional stable law convergence.}
  \[
    ~X_n^{\omega} (t) := \frac{1}{b_n}[ \sum_{j=0}^{\lfloor nt \rfloor-1} \phi \circ
    T_{\omega}^j - tc_n] \dto X_{\alpha} (t)
  \]
  in $\bD[0,\infty)$ in the $J_1$ topology $\bP$-a.e.~$\omega$, with respect to $\nu^{\omega}$ for $\phi$
  regularly varying with index $\alpha$ and $X_{\alpha}(t)$ the
  corresponding $\alpha$-stable process.

\end{itemize}

For the cases we are considering, the scaling constants $b_n$ are given
by~\eqref{eqn:tail1} in Definition~\ref{def:scaling-constants}, and the
centering constants $c_n$ are given in
Definition~\ref{def:centering-constants} (see also
Remark~\ref{rem:centering-constants}).

\begin{rem}
  In the limit laws for quenched systems  that we obtain of type  (B) and (C), the centering sequence
  $c_n$ \emph{does not depend on the realization $\omega$}. This is in
  contrast to the case of the CLT, where a random centering is 
  necessary; see \cite[Theorem 9]{Abdelkader-Aimino2016} and
  \cite[Theorem 5.3]{NPT}.
\end{rem}

\section{A Poisson Point Process Approach to random and sequential
  dynamical systems}\label{sec:levy}

Our results are based on the Poisson point process approach developed by
Marta Tyran-Kami\'{n}ska \cite{TK,TK-dynamical} adapted to our random
setting (see Theorems~\ref{thm:seq_levy_FLT}
and~\ref{thm:seq_levy_was-TK-thm1.3}). Namely, convergence to a stable law
or a L\'evy process follows from the convergence of the corresponding
(Poisson) jump processes, and control of the small jumps.


A key role is played by Kallenberg's Theorem~\cite[Theorem 4.7]{Kal75}
to check convergence of the Poisson point processes,
$N_n\dto  N_{(\alpha)} $. Kallenberg's theorem does not assume stationarity
and hence we may use it in our setting.

In this section, we provide general conditions ensuring weak convergence to
L\'evy stable processes for non-stationary dynamical systems, following
closely the approach of Tyran-Kami\'nska \cite{TK-dynamical}. We start from
the very general setting of non-autonomous sequential dynamics and then
specialize to the case of quenched random dynamical systems, which will be
useful to treat iid random compositions in the later sections.

\subsection{Sequential transformations}\label{ssec:seq_levy}

Recall the notations introduced in Section~\ref{ssec:poisson_process}.
$(T_n)_{n \ge 1}$ is a sequence of measurable transformations on a probability space $(Y, \cB, \mu)$. For $n \ge 1$, recall we define
\[
T_1^n = T_n \circ \ldots \circ T_1.
\]


The proof of the following statement is essentially the same as the proof
of \cite[Theorem~1.1]{TK-dynamical}.

Note that the measure $\mu$ does not have to be invariant. Moreover
(see~\cite[Remark~2.1]{TK-dynamical}), the convergence
$X_n \dto X_{(\alpha)}$ holds even without the condition
$\mu(\phi \circ T_1^j \neq 0) = 1$, which is used only for the converse
implication of the ``if and only if''.

\begin{thm}[Functional stable limit law,
  {\cite[Theorem~1.1]{TK-dynamical}}]\label{thm:seq_levy_FLT}
  Let $\alpha\in (0,2)$ and suppose that $\mu(\phi \circ T_1^j \neq 0) = 1$
  for all $j\ge 0$. Then $X_n \dto X_{(\alpha)}$ in $\bD[0, \infty)$ under
  the probability measure $\mu$ for some constants $b_n>0$ and $c_n$ if and
  only if
  \begin{itemize}
  \item $N_n \dto N_{(\alpha)}$ and
  \item for all $\delta > 0$, $\ell \ge 1$, with $c_\alpha(\epsilon)$ given
    by \eqref{eq:c_alpha},
    \begin{equation}\label{eqn:seq_levy}
      \lim_{\epsilon \to 0} \limsup_{n
        \to \infty} \mu \left( \sup_{0 \le t \le \ell} \left|
          \frac{1}{b_n} \left[
          \sum_{j=0}^{\lfloor nt \rfloor - 1} \phi \circ T_1^j
          \one_{\left\{| \phi \circ T_1^j | \le \epsilon b_n \right\}}
          - t(c_n - b_n c_\alpha(\epsilon))\right] \right| \ge \delta \right) = 0
    \end{equation}
  \end{itemize}
\end{thm}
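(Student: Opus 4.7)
Since the statement is essentially the Tyran-Kamińska criterion in the non-stationary setting (with $\mu$ not required to be invariant), my plan is to follow the classical Lévy-Itô decomposition strategy: split each partial sum into a ``big jump'' part driven by the point process $N_n$ and a ``small jump'' remainder controlled by condition~\eqref{eqn:seq_levy}, and then let the truncation level $\epsilon\to 0$.

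First I would fix $\epsilon>0$ and write
\[
X_n(t) \;=\; X_n^{>\epsilon}(t) \;+\; X_n^{\le \epsilon}(t) \;-\; tc_n,
\]
where
\[
X_n^{>\epsilon}(t) := \frac{1}{b_n}\sum_{j=0}^{\lfloor nt\rfloor-1}\phi\circ T_1^j\,\one_{\{|\phi\circ T_1^j|>\epsilon b_n\}},
\qquad
X_n^{\le\epsilon}(t) := \frac{1}{b_n}\sum_{j=0}^{\lfloor nt\rfloor-1}\phi\circ T_1^j\,\one_{\{|\phi\circ T_1^j|\le\epsilon b_n\}}.
\]
The big-jump process $X_n^{>\epsilon}$ is a measurable functional of the restriction of the point process $N_n$ to $(0,\infty)\times\{|x|>\epsilon\}$: indeed $X_n^{>\epsilon}(t)=\int_{(0,t]\times\{|x|>\epsilon\}} x\,N_n(ds,dx)$. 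By Kallenberg's theorem the hypothesis $N_n\dto N_{(\alpha)}$ transfers to the restricted point processes on $(0,\infty)\times\{|x|>\epsilon\}$ for any $\epsilon$ with $(\Lebesgue\times\Pi_\alpha)(\partial\{|x|>\epsilon\})=0$ (which is automatic since $\Pi_\alpha$ has no atoms). Applying the continuous mapping theorem to the summation functional from the space of locally finite point measures into $\bD[0,\infty)$ with the $J_1$ topology (continuous at point configurations whose time coordinates are pairwise distinct, which holds almost surely for $N_{(\alpha)}$) yields
\[
X_n^{>\epsilon} \dto X_{(\alpha)}^{>\epsilon}
\qquad\text{in } \bD[0,\infty),
\]
where $X_{(\alpha)}^{>\epsilon}$ is the compound Poisson process of jumps of $X_{(\alpha)}$ exceeding $\epsilon$.

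Next, hypothesis~\eqref{eqn:seq_levy} asserts precisely that
\[
\sup_{0\le t\le \ell}\bigl|\,X_n^{\le\epsilon}(t) - t(c_n - c_\alpha(\epsilon))\,\bigr| \;\xrightarrow[n\to\infty,\ \epsilon\to 0]{\mu\text{-prob.}}\; 0
\]
in the iterated-limit sense. Combining this with the decomposition gives
\[
X_n(t) \;=\; X_n^{>\epsilon}(t) \;-\; tc_\alpha(\epsilon) \;+\; R_n^\epsilon(t),
\]
with $R_n^\epsilon\to 0$ in probability, uniformly on compacts, as $n\to\infty$ then $\epsilon\to 0$. Therefore $X_n^{>\epsilon}(\cdot)-\cdot\,c_\alpha(\epsilon)\dto X_{(\alpha)}^{>\epsilon}(\cdot)-\cdot\,c_\alpha(\epsilon)$, and the Lévy-Itô decomposition tells us that the right-hand side converges (in $J_1$, almost surely) to $X_{(\alpha)}$ as $\epsilon\to 0$; the shift by $c_\alpha(\epsilon)$ is exactly the drift compensator needed to make the truncated compound Poisson processes converge when $\alpha\ge 1$ (and is $0$ otherwise). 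A standard $3\epsilon$-style argument (Theorem~3.2 of Billingsley applied to the double sequence in $n$ and $\epsilon$) then promotes this to $X_n\dto X_{(\alpha)}$ in $\bD[0,\infty)$.

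For the converse direction, assuming $X_n\dto X_{(\alpha)}$, one recovers $N_n\dto N_{(\alpha)}$ by reading off jumps: the map $\psi\mapsto\{(s,\Delta\psi(s))\}$ is continuous in $J_1$ at paths whose jump-size distribution has no atoms, which holds for $X_{(\alpha)}$ since $\Pi_\alpha$ is atomless; the hypothesis $\mu(\phi\circ T_1^j\neq 0)=1$ is used here to ensure that the purported jump at time $j/n$ is not masked. Condition~\eqref{eqn:seq_levy} then follows by subtracting the big-jump contribution from both sides and applying the continuous mapping theorem again.

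The main obstacle, as in~\cite{TK-dynamical}, is the joint passage $n\to\infty$, $\epsilon\to 0$: one must check tightness of $X_n$ in the $J_1$ topology without relying on stationarity or mixing of $(T_n)$, and verify that the $J_1$ continuity points of both the jump-reading functional and the summation functional are hit with full limiting probability. This is where the careful choice of $c_\alpha(\epsilon)$ is essential, and where the argument from~\cite{TK-dynamical} transfers verbatim because no invariance of $\mu$ under the $T_n$ is invoked in that part of the proof.
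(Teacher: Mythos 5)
Your outline is correct and is essentially the argument the paper relies on: the paper gives no independent proof of this theorem, deferring entirely to \cite[Theorem~1.1]{TK-dynamical}, whose proof is exactly the big-jump/small-jump decomposition you describe (continuous mapping applied to the summation functional of the restricted point process, condition~\eqref{eqn:seq_levy} to kill the truncated part up to the drift $c_\alpha(\epsilon)$, L\'evy--It\^o to identify the $\epsilon\to 0$ limit, and the convergence-together theorem to interchange the limits). Your remarks on where $\mu(\phi\circ T_1^j\neq 0)=1$ enters and on the non-invariance of $\mu$ being harmless match the paper's own remarks preceding the statement.
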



\begin{rem}
  In some cases the convergence $N_n \dto N_{(\alpha)}$ does not hold, but
  one has convergence of the marginals,
  $N_n((0,1] \times \cdot) \dto N_{(\alpha)}((0,1] \times \cdot)$. In this
  case, although unable to obtain a functional stable law convergence of
  type (C), we can in some settings prove the convergence to a stable law
  for the Birkhoff sums (convergence of type (B)).

  In particular, we are unable to prove $N_n^\omega \dto N_{(\alpha)}$ for
  the case of random intermittent maps. On the
  other hand, in the setting of random uniformly expanding maps  we use the spectral gap to show that
  $N_n^\omega \dto N_{(\alpha)}$, and then obtain the functional stable
  limit law.
\end{rem}

The next statement is \cite[Lemma 2.2, part (2)]{TK-dynamical}, which
follows from \cite[Theorem~3.2]{TK}. Again, the measure does not have to be
invariant.

\begin{thm}[Stable limit law, {\cite[Lemma
    2.2]{TK-dynamical}}]\label{thm:seq_levy_was-TK-thm1.3}

  For $\alpha\in(0,2)$, consider an observable $\phi$ on the probability
  measure $\mu$, and $c_\alpha(\epsilon)$ given by \eqref{eq:c_alpha}.

  If 
  \begin{equation}\nonumber 
    N_n ((0,1] \times \cdot )\dto N_{(\alpha)} ((0,1] \times\cdot)
  \end{equation}
  and, for all $\delta> 0$, 
  \begin{equation}\label{eqn:seq_levy_SLT}
    \lim_{\epsilon \to 0} \limsup_{n
      \to \infty} \mu \left( \left| \frac{1}{b_n} \left[ 
        \sum_{j=0}^{n - 1} \phi \circ T_1^j
        \one_{\left\{| \phi \circ T_1^j | \le \epsilon b_n \right\}}
        - (c_n - b_n c_\alpha(\epsilon)) \right] \right| \ge \delta \right) = 0 
  \end{equation}
  then
  \[
    \frac{1}{b_n}[\sum_{j=0}^{n-1}\phi\circ T_{1}^j - c_n] \dto
    X_{(\alpha)}(1)
  \]
  under the probability measure $\mu$.
\end{thm}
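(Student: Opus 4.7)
The plan is to prove the statement by truncation at level $\epsilon b_n$, treating the ``large jumps'' of $\phi \circ T_1^j$ via the hypothesis on Poisson convergence of the marginal $N_n((0,1]\times\cdot)$ and the ``small jumps'' via the hypothesis \eqref{eqn:seq_levy_SLT}, then letting $\epsilon\to 0$. Concretely, for each fixed $\epsilon>0$ I would write
\[
\frac{1}{b_n}\sum_{j=0}^{n-1} \phi\circ T_1^j - c_n
\;=\; L_n(\epsilon) \;+\; \bigl(M_n(\epsilon) - (c_n - c_\alpha(\epsilon))\bigr) \;-\; c_\alpha(\epsilon),
\]
where
\[
L_n(\epsilon) = \frac{1}{b_n}\sum_{j=0}^{n-1}\phi\circ T_1^j\,\one_{\{|\phi\circ T_1^j|>\epsilon b_n\}},\quad
M_n(\epsilon) = \frac{1}{b_n}\sum_{j=0}^{n-1}\phi\circ T_1^j\,\one_{\{|\phi\circ T_1^j|\le\epsilon b_n\}}.
\]

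For the large jumps, observe that $L_n(\epsilon)$ is exactly the integral of the coordinate $x$ against the point measure $N_n(ds,dx)$ restricted to $(0,1]\times\{|x|>\epsilon\}$. Since $\Pi_\alpha(\{|x|=\epsilon\})=0$, the functional $\eta \mapsto \int_{(0,1]\times\{|x|>\epsilon\}} x\,\eta(ds,dx)$ is continuous at $\eta = N_{(\alpha)}$ restricted to that slab (a.s.\ it assigns only finitely many atoms, none on the boundary). By the continuous mapping theorem applied to the marginal convergence $N_n((0,1]\times\cdot)\dto N_{(\alpha)}((0,1]\times\cdot)$, I get
\[
L_n(\epsilon)\;\dto\; L(\epsilon):=\int_{(0,1]\times\{|x|>\epsilon\}} x\,N_{(\alpha)}(ds,dx).
\]

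For the small jumps, the hypothesis \eqref{eqn:seq_levy_SLT} is precisely the statement that
$M_n(\epsilon) - (c_n - c_\alpha(\epsilon)) \to 0$ in $\mu$-probability. Slutsky's lemma then yields, for each fixed $\epsilon>0$,
\[
\frac{1}{b_n}\sum_{j=0}^{n-1}\phi\circ T_1^j - c_n \;\dto\; L(\epsilon) - c_\alpha(\epsilon).
\]
It remains to identify the limit when $\epsilon\to 0$. The Lévy--Itô decomposition for $X_{(\alpha)}(1)$ gives
\[
X_{(\alpha)}(1) \;=\; \lim_{\epsilon\to 0}\Bigl(\int_{(0,1]\times\{|x|>\epsilon\}} x\,N_{(\alpha)}(ds,dx) \;-\; c_\alpha(\epsilon)\Bigr),
\]
with the compensator $c_\alpha(\epsilon)$ matching the one in \eqref{eq:c_alpha} exactly because of how $a_\alpha$ and $\Pi_\alpha$ were chosen in Section~\ref{ssec:stable}; the limit exists a.s.\ in the cases $\alpha\in(1,2)$ and $\alpha=1$ by the usual martingale/absolute-convergence arguments for Lévy processes, and trivially for $\alpha\in(0,1)$ where $c_\alpha\equiv 0$ and the jump series is absolutely summable.

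The main obstacle is to pass from ``$\dto$ as $n\to\infty$ with $\epsilon$ fixed, then $\epsilon\to 0$'' to convergence of the original sequence. This is handled by the standard Billingsley-type device (\cite[Theorem~3.2]{Bil99}): given that $L(\epsilon)-c_\alpha(\epsilon)\dto X_{(\alpha)}(1)$ as $\epsilon\to 0$, it suffices to check that for every $\delta>0$,
\[
\lim_{\epsilon\to 0}\limsup_{n\to\infty} \mu\Bigl(\Bigl|\frac{1}{b_n}\sum_{j=0}^{n-1}\phi\circ T_1^j - c_n - \bigl(L_n(\epsilon)-c_\alpha(\epsilon)\bigr)\Bigr|\ge\delta\Bigr)=0,
\]
which is exactly the hypothesis \eqref{eqn:seq_levy_SLT}. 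Putting the three ingredients together gives the stable limit law. Note that stationarity or invariance of $\mu$ is never used: only the two hypotheses on the distribution of $\phi\circ T_1^j$ under $\mu$ enter, which is why the conclusion will apply pathwise to each fiber measure $\nu^\omega$.
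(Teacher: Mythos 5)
The paper does not actually prove Theorem~\ref{thm:seq_levy_was-TK-thm1.3}; it is taken verbatim from Tyran-Kami\'{n}ska \cite[Lemma 2.2]{TK-dynamical}, which in turn rests on \cite[Theorem 3.2]{TK}. Your proof reproduces the standard point-process argument that underlies those references, and the overall architecture is correct: split the sum at level $\epsilon b_n$, identify $L_n(\epsilon)$ with the sum functional applied to $N_n$ on the slab $(0,1]\times\{|x|>\epsilon\}$, use continuous mapping (justified because $\Pi_\alpha(\{|x|=\epsilon\})=0$ and the slab carries a.s.\ finitely many atoms of $N_{(\alpha)}$), match $c_\alpha(\epsilon)$ against the L\'evy--It\^o compensator, and then close the argument with the ``converging together'' lemma (Billingsley's Theorem~3.2). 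The verification that the iterated-limit discrepancy between $X_n$ and $L_n(\epsilon)-c_\alpha(\epsilon)$ is precisely \eqref{eqn:seq_levy_SLT} is also carried out correctly.

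One intermediate assertion is wrong and should be deleted: you write that \eqref{eqn:seq_levy_SLT} ``is precisely the statement that $M_n(\epsilon)-(c_n-c_\alpha(\epsilon))\to 0$ in $\mu$-probability'' and then invoke Slutsky to get convergence of $X_n$ to $L(\epsilon)-c_\alpha(\epsilon)$ for each fixed $\epsilon$. The hypothesis is only a $\lim_{\epsilon\to0}\limsup_{n\to\infty}$ statement, not a fixed-$\epsilon$ convergence, so Slutsky does not apply; and if it did, you would have $X_n$ converging to a limit that depends on $\epsilon$, which is absurd. Fortunately this step is a detour, not a load-bearing part of the argument --- the Billingsley device that you then invoke needs exactly the iterated-limit control and nothing more, so your final chain ``$L_n(\epsilon)-c_\alpha(\epsilon)\dto L(\epsilon)-c_\alpha(\epsilon)$ as $n\to\infty$, then $L(\epsilon)-c_\alpha(\epsilon)\dto X_{(\alpha)}(1)$ as $\epsilon\to0$, plus \eqref{eqn:seq_levy_SLT}'' is the correct argument. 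Simply drop the Slutsky paragraph and keep the converging-together lemma.

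Two smaller points worth stating explicitly if this were to be written up carefully: (i) the continuous mapping step requires noting that the map $\eta\mapsto\int_{(0,1]\times\{|x|>\epsilon\}} x\,\eta(ds,dx)$ is a.s.\ continuous on the support of $N_{(\alpha)}$ in the vague topology because $N_{(\alpha)}$ has finitely many points there, none on the boundary, and mass cannot accumulate at $|x|=\infty$ since $\Pi_\alpha(\{|x|>\epsilon\})<\infty$; (ii) the algebraic check that $c_\alpha(\epsilon)$ in \eqref{eq:c_alpha} equals $\int_{\epsilon<|x|\le 1}x\,\Pi_\alpha(dx)-a_\alpha$ (respectively the truncated analogue for $\alpha=1$) is a short computation from the form of $\Pi_\alpha$ and $a_\alpha$ given in Section~\ref{ssec:stable}, and is worth doing once so the reader sees the compensators really agree. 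Your closing remark that stationarity of $\mu$ is never used, and that this is what allows the fiberwise application, is exactly the observation the authors emphasize.
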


\subsection{Random dynamical systems}\label{ssec:iid}

Let $\phi : Y \to \bR$ be a measurable function such that
$\nu^\omega( \phi \neq 0) = 1$.
\begin{prop}[{\cite[proof of Theorem
    1.2]{TK-dynamical}}]\label{prop:alpha<1}\

  Let $\alpha \in (0,1)$. With $b_n$ as in
  Definition~\ref{def:scaling-constants} and $c_n=0$, suppose that for
  $\bP$-a.e.~$\omega \in \Omega$
  \begin{equation}\label{eqn:alpha<1}
    \lim_{\epsilon \to 0} \limsup_{n \to
      \infty} \frac{1}{b_n} \sum_{j=0}^{n \ell - 1} \bE_{\nu^{\sigma^j
        \omega}}( | \phi| \one_{\left\{ | \phi | \le \epsilon
        b_n\right\}}) = 0 \: \text{ for all } \ell \ge 1,
  \end{equation}
  and
  \[
    N_n^\omega \dto N_{(\alpha)}.
  \]
  Then $X_n^\omega \dto X_{(\alpha)}$ in $\bD[0, \infty)$ under the
  probability measure $\nu^\omega$ for $\bP$-a.e.~$\omega \in \Omega$.
\end{prop}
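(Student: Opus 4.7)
The plan is to reduce the quenched statement to the sequential criterion Theorem~\ref{thm:seq_levy_FLT} by fixing $\omega$ and regarding $T_\omega^j$ as a sequential composition on $(Y,\nu^\omega)$, with $T_j := T_{\sigma^{j-1}\omega}$, so that $T_1^j = T_\omega^j$ and $\mu=\nu^\omega$. First I would choose a full-measure set $\Omega_0 \subset \Omega$ on which simultaneously: the hypothesis \eqref{eqn:alpha<1} holds, $N_n^\omega \dto N_{(\alpha)}$ holds, and $\nu^{\sigma^j \omega}(\phi \neq 0) = 1$ for every $j \ge 0$ (a countable intersection of full-measure sets, using the equivariance $(T_\omega^j)_* \nu^\omega = \nu^{\sigma^j \omega}$, which guarantees $\nu^\omega(\phi \circ T_\omega^j \neq 0) = 1$ and thereby supplies the nonvanishing assumption of Theorem~\ref{thm:seq_levy_FLT}).

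Second, for $\omega \in \Omega_0$ I need only verify the small-jump condition \eqref{eqn:seq_levy}. Since $\alpha \in (0,1)$, Definition~\ref{def:centering-constants} gives $c_n=0$ and \eqref{eq:c_alpha} gives $c_\alpha(\epsilon)=0$, so \eqref{eqn:seq_levy} collapses to showing, for each fixed $\delta>0,\ell\ge 1$,
\[
\lim_{\epsilon\to 0}\limsup_{n\to\infty}\nu^\omega\!\left(\sup_{0\le t\le \ell}\left|\frac{1}{b_n}\sum_{j=0}^{\lfloor nt\rfloor-1}\phi\circ T_\omega^j\,\one_{\{|\phi\circ T_\omega^j|\le \epsilon b_n\}}\right|\ge \delta\right)=0.
\]
The supremum is crudely dominated by the $t=\ell$ version of the sum with absolute values: the running partial sums have absolute value at most $\frac{1}{b_n}\sum_{j=0}^{n\ell-1}|\phi|\circ T_\omega^j\,\one_{\{|\phi\circ T_\omega^j|\le \epsilon b_n\}}$.

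Third, Markov's inequality turns this into an expectation bound, and the equivariance $(T_\omega^j)_*\nu^\omega = \nu^{\sigma^j\omega}$ rewrites each term $\bE_{\nu^\omega}(|\phi|\circ T_\omega^j\,\one_{\{|\phi\circ T_\omega^j|\le \epsilon b_n\}})$ as $\bE_{\nu^{\sigma^j\omega}}(|\phi|\,\one_{\{|\phi|\le \epsilon b_n\}})$. This yields
\[
\nu^\omega(\,\cdots\ge \delta\,)\le \frac{1}{\delta\, b_n}\sum_{j=0}^{n\ell-1}\bE_{\nu^{\sigma^j\omega}}\!\left(|\phi|\,\one_{\{|\phi|\le \epsilon b_n\}}\right),
\]
which goes to $0$ in the iterated limit by exactly the hypothesis \eqref{eqn:alpha<1}. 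Combined with the Poisson input $N_n^\omega\dto N_{(\alpha)}$, Theorem~\ref{thm:seq_levy_FLT} then delivers $X_n^\omega\dto X_{(\alpha)}$ in the $J_1$ topology under $\nu^\omega$ for every $\omega\in\Omega_0$.

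The argument is essentially a bookkeeping exercise: there is no serious obstacle beyond ensuring the measurability of the exceptional set $\Omega\setminus\Omega_0$ and being careful that the small-jump control \eqref{eqn:alpha<1} is stated with summation up to $n\ell-1$ rather than $n-1$, which is precisely what is needed so that the supremum over $t\in[0,\ell]$ (and not just $t=1$) can be controlled; this is why \eqref{eqn:alpha<1} is quantified over all $\ell\ge 1$, and this is the one place where the functional (as opposed to one-dimensional) statement requires more than the analogue of Theorem~\ref{thm:seq_levy_was-TK-thm1.3}.
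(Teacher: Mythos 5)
Your proposal is correct and follows essentially the same route as the paper: reduce to Theorem~\ref{thm:seq_levy_FLT} with $T_n=T_{\sigma^{n-1}\omega}$ and $\mu=\nu^\omega$, note $c_n=c_\alpha(\epsilon)=0$, and bound the supremum of the truncated partial sums via the sum of absolute values, Markov's inequality, and equivariance (the paper packages this last step as the quoted special case of the Kounias--Weng maximal inequality, Theorem~\ref{thm.KW69}, but the content is identical). Your extra care about $\nu^\omega(\phi\circ T_\omega^j\neq 0)=1$ is harmless but not needed, since that hypothesis is only used for the converse implication of Theorem~\ref{thm:seq_levy_FLT}.
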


\begin{proof}
  We will check that the hypothesis of Theorem \ref{thm:seq_levy_FLT} are met
  for $\bP$-a.e.~$\omega$ with $T_n = T_{\sigma^{n-1} \omega}$,
  $\mu = \nu^\omega$. Recall that $c_n=c_\alpha(\epsilon) = 0$ when
  $\alpha \in (0,1)$. Using \cite[Theorem 1]{KW69} (see
  Theorem~\ref{thm.KW69}) and the equivariance of the family of measures
  $\left\{ \nu^\omega \right\}_{\omega \in \Omega}$, we have
\[
\nu^\omega \left( \sup_{0 \le t \le \ell} \left| \frac{1}{b_n}
    \sum_{j=0}^{\lfloor nt \rfloor - 1} \phi \circ T_\omega^j
    \one_{\left\{ | \phi \circ T_\omega^j  | \le \epsilon b_n
      \right\}} \right| \ge \delta \right)  \le \frac{1}{\delta b_n}
\sum_{j=0}^{n \ell - 1} \bE_{\nu^{\sigma^j \omega}} ( | \phi |
\one_{\left\{  |\phi | \le \epsilon b_n \right\}})
\]
which shows that condition \eqref{eqn:alpha<1} implies condition
\eqref{eqn:seq_levy} for all $\delta > 0$ and $\ell \ge 1$.
\end{proof}

\begin{rem}
  One could replace condition \eqref{eqn:alpha<1} by one similar to
  \eqref{eqn:ergodicity}, and use the argument in the proof of
  Proposition~\ref{prop:alpha_ge_1}.
\end{rem}

\begin{thm}[Kounias and Weng~{\cite[special
    case of Theorem 1 therein]{KW69}}]\label{thm.KW69}\

  Assume the random variables $X_k$ are in $L^1(\mu)$. Then
  \[
    \mu\left(\max_{1\le k \le n} \left|\sum_{\ell=1}^k X_\ell\right| \ge
      \delta\right) \le \frac{1}{\delta} \sum_{k=1}^n \bE_\mu(|X_k|).
  \]
\end{thm}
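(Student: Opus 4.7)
The plan is to derive this maximal inequality from a pathwise bound combined with Markov's inequality; no first-passage-time decomposition is needed for the special case stated. First I would write $S_k := \sum_{\ell=1}^k X_\ell$ and use the triangle inequality twice: for each $1 \le k \le n$,
$$|S_k| \le \sum_{\ell=1}^k |X_\ell| \le \sum_{\ell=1}^n |X_\ell|.$$
Since the right-hand side is independent of $k$, taking the maximum over $1 \le k \le n$ gives the pathwise domination $\max_{1\le k \le n} |S_k| \le \sum_{\ell=1}^n |X_\ell|$.

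Next, the assumption $X_k \in L^1(\mu)$ ensures the majorant is non-negative and integrable, so Markov's inequality applied to it yields
$$\mu\!\left(\max_{1\le k \le n} \left|\sum_{\ell=1}^k X_\ell\right| \ge \delta\right) \le \mu\!\left(\sum_{\ell=1}^n |X_\ell| \ge \delta\right) \le \frac{1}{\delta}\,\bE_\mu\!\left(\sum_{\ell=1}^n |X_\ell|\right) = \frac{1}{\delta} \sum_{k=1}^n \bE_\mu(|X_k|),$$
which is precisely the stated bound.

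There is essentially no serious obstacle. The main thing to notice is that the version of the Kounias--Weng inequality stated here does not require the finer first-passage-time argument of the original paper (which would sharpen the right-hand side); the triangle inequality combined with Markov suffices. Importantly, the proof uses neither independence, stationarity, nor any dynamical structure of the $X_k$, which is exactly what is needed for its application in Proposition~\ref{prop:alpha<1}, where the $X_k$ are the non-stationary truncated variables $\phi \circ T_\omega^{k-1} \one_{\{|\phi \circ T_\omega^{k-1}| \le \epsilon b_n\}}$.
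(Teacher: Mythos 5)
Your proof is correct. The paper itself gives no proof of this statement --- it is quoted directly from Kounias and Weng --- so there is no internal argument to compare against; but your derivation is a valid, self-contained justification of the special case as stated. The key observation, which you make explicitly, is that for the $L^1$ bound with constant $1/\delta$ the ``maximal'' aspect is vacuous: the pathwise domination $\max_{1\le k\le n}|S_k|\le\sum_{\ell=1}^n|X_\ell|$ reduces everything to Markov's inequality, with no need for the first-passage (stopping-time) decomposition used in the original Kounias--Weng argument (whose content lies in handling moments $\bE|X_k|^r$ for general $r$ and in obtaining sharper constants). You are also right that the argument uses no independence or stationarity, which is exactly why the paper can invoke it for the non-stationary truncated sums in Proposition~\ref{prop:alpha<1}.
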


\begin{prop}\label{prop:alpha_ge_1}

  Let $\alpha \in [1, 2)$.

  With $b_n$ and $c_n$ as in Definitions~\ref{def:scaling-constants} and
  ~\ref{def:centering-constants}, and $c_\alpha(\epsilon)$ as in
  \eqref{eq:c_alpha}, suppose that for
  all $\epsilon > 0$ and all $\ell \ge 1$,
  \begin{equation}\label{eqn:alpha_ge_1}
    \lim_{n \to \infty} \sup_{0 \le t
      \le \ell} \left|  \frac{1}{b_n} \left[ \sum_{j=0}^{\lfloor nt \rfloor - 1}
      \bE_{\nu^{\sigma^j \omega}}(\phi \one_{\left\{| \phi | \le
          \epsilon b_n \right\}}) - t (c_n - b_n c_\alpha(\epsilon)) \right]\right| =
    0 \text{ \quad for $\bP$-a.e.~$\omega \in \Omega$,
    }
  \end{equation}
  and that for all $\delta > 0$
  %
\begin{multline}\label{eqn:ergodicity}
  \lim_{\epsilon \to 0} \limsup_{n \to
    \infty} 
  \underset{\omega \in \Omega}{\rm{esssup}}
  \, \nu^{\omega} \Big( \max_{1 \le k \le n} \Big| \frac{1}{b_n}
  \sum_{j=0}^{k-1} \big[ \phi \circ T_\omega^{j} \one_{\left\{ | \phi
      \circ T_\omega^j| \le \epsilon b_n \right\}} 
  - \bE_{\nu^{\sigma^j \omega}}( \phi \one_{\left\{ | \phi | \le
      \epsilon b_n\right\}})\big] \Big| \ge \delta \Big) =0.
\end{multline}

If $N_n^\omega \dto N_{(\alpha)}$ for $\bP$-a.e.~$\omega \in \Omega$, then
$X_n^\omega \dto X_{(\alpha)}$ in $\bD[0, \infty)$ under the probability
measure $\nu^\omega$ for $\bP$-a.e.~$\omega \in \Omega$.
\end{prop}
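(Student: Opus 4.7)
The plan is to verify the hypotheses of Theorem~\ref{thm:seq_levy_FLT} applied to $T_k = T_{\sigma^{k-1}\omega}$ and $\mu = \nu^\omega$, for $\omega$ in a single $\bP$-full-measure set. The Poisson point process convergence is given, so the work lies entirely in condition~\eqref{eqn:seq_levy}. My first step is to add and subtract the conditional means: by the equivariance $(T_\omega^j)_*\nu^\omega = \nu^{\sigma^j\omega}$, the quantity inside the probability in~\eqref{eqn:seq_levy} splits as $A_n(\omega,t) + B_n(\omega,t)$ with
\[
A_n(\omega,t) := \frac{1}{b_n}\sum_{j=0}^{\lfloor nt \rfloor - 1}\left[\phi\circ T_\omega^j\,\one_{\{|\phi \circ T_\omega^j|\le\epsilon b_n\}} - \bE_{\nu^{\sigma^j\omega}}\!\left(\phi\,\one_{\{|\phi|\le\epsilon b_n\}}\right)\right]
\]
a centered fluctuation whose summands have zero $\nu^\omega$-mean, and
\[
B_n(\omega,t) := \frac{1}{b_n}\sum_{j=0}^{\lfloor nt\rfloor - 1}\bE_{\nu^{\sigma^j\omega}}\!\left(\phi\,\one_{\{|\phi|\le\epsilon b_n\}}\right) - t(c_n - c_\alpha(\epsilon))
\]
the deterministic drift. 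By the triangle inequality it suffices to control each separately.

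The drift is handled at once by~\eqref{eqn:alpha_ge_1}: for each fixed $\epsilon>0$ and $\ell\ge 1$, $\sup_{0\le t\le \ell}|B_n(\omega,t)|\to 0$ for $\bP$-a.e.~$\omega$, so $B_n$ contributes nothing to~\eqref{eqn:seq_levy} even before sending $\epsilon\to 0$. For the fluctuation, $\sup_{0\le t\le \ell}|A_n(\omega,t)|$ equals $b_n^{-1}\max_{0\le k\le\lfloor n\ell\rfloor}|\cdots|$; for $k\le n$ the inner partial sums coincide exactly with those in~\eqref{eqn:ergodicity}, but the range of the maximum is longer. To reconcile this I would set $n' := \lfloor n\ell\rfloor$ and $\epsilon' := \epsilon b_n/b_{n'}$, then note that the quantity normalized by $b_n$ at truncation $\epsilon b_n$ differs from the same quantity normalized by $b_{n'}$ at truncation $\epsilon' b_{n'}$ only by the factor $b_{n'}/b_n$. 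Since $b_n$ is regularly varying of index $1/\alpha$, this factor tends to $\ell^{1/\alpha}$, so $\epsilon'\to \epsilon\ell^{-1/\alpha}$ (still tending to $0$ with $\epsilon$), while the threshold $\delta$ on the right becomes at least $\tfrac12\delta\ell^{-1/\alpha}>0$ for large $n$. Applying~\eqref{eqn:ergodicity} at parameters $(n',\epsilon')$ yields
\[
\lim_{\epsilon\to 0}\limsup_{n\to\infty}\nu^\omega\!\left(\sup_{0\le t\le\ell}|A_n(\omega,t)|\ge\delta\right)=0.
\]

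The essup-in-$\omega$ formulation of~\eqref{eqn:ergodicity} is promoted to a single $\bP$-full-measure set of $\omega$ by intersecting, over a countable collection of rational triples $(\epsilon,\delta,\ell)$, the corresponding full-measure sets on which the essup bound becomes pointwise; monotonicity in $\delta$ and $\epsilon$ extends the conclusion to arbitrary real parameters, and the same intersection absorbs the $\bP$-full-measure set coming from~\eqref{eqn:alpha_ge_1} and from the Poisson hypothesis. This verifies~\eqref{eqn:seq_levy} and completes the proof. The main obstacle is the rescaling bookkeeping in the fluctuation step — matching normalization, truncation, and threshold in~\eqref{eqn:ergodicity} to those in~\eqref{eqn:seq_levy} — which is nevertheless clean thanks to the regular variation of $b_n$. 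Conceptually the argument mirrors Proposition~\ref{prop:alpha<1}, but now the non-integrability present when $\alpha\ge 1$ forces the appearance of truncated moments and the centering $c_\alpha(\epsilon)$.
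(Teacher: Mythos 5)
Your overall skeleton agrees with the paper's proof: verify the hypotheses of Theorem~\ref{thm:seq_levy_FLT} fiberwise, split the quantity in \eqref{eqn:seq_levy} into a deterministic drift plus a centered fluctuation using the equivariance $(T_\omega^j)_*\nu^\omega=\nu^{\sigma^j\omega}$, and dispose of the drift by \eqref{eqn:alpha_ge_1}. The divergence — and the gap — is in how you reduce the fluctuation maximum over $1\le k\le n\ell$ to hypothesis \eqref{eqn:ergodicity}, which only covers $1\le k\le n$. Your rescaling $n'=\lfloor n\ell\rfloor$, $\epsilon'=\epsilon b_n/b_{n'}$ asks you to apply \eqref{eqn:ergodicity} with a truncation parameter $\epsilon'=\epsilon'(n)$ that varies with $n$. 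But \eqref{eqn:ergodicity} is an iterated limit, $\lim_{\epsilon\to 0}\limsup_{n\to\infty}$, in which $\epsilon$ is held fixed while $n\to\infty$; the probability is not monotone in the truncation level (enlarging the truncation adds centered terms whose effect on the absolute value of the partial sums can go either way), so the vanishing of the iterated limit for each fixed $\epsilon$ does not let you substitute an $n$-dependent $\epsilon'(n)$ that merely \emph{converges} to $\epsilon\ell^{-1/\alpha}$. Regular variation of $b_n$ gives you $\epsilon'(n)\to\epsilon\ell^{-1/\alpha}$, not equality, and closing the gap would require a separate estimate on the terms lying in the annulus between the two truncation levels — an argument you do not supply and which is not "clean bookkeeping."

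The paper avoids this entirely: it splits $\{1,\dots,n\ell\}$ into $\ell$ consecutive blocks of length $n$, uses the elementary inclusion of the event $\{\sup_{1\le k\le n\ell}|\cdot|\ge\delta\}$ into the union over blocks of the events $\{\sup_{in<k\le(i+1)n}|\sum_{j=in}^{k-1}\cdots|\ge\delta/\ell\}$, and then applies the equivariance $(T_\omega^{in})_*\nu^\omega=\nu^{\sigma^{in}\omega}$ to turn the $i$-th block under $\nu^\omega$ into the first block under the shifted realization $\sigma^{in}\omega$. The normalization $b_n$, the truncation $\epsilon b_n$, and the range $1\le k\le n$ then match \eqref{eqn:ergodicity} exactly, at threshold $\delta/\ell$. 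This is also the real reason the hypothesis carries an $\operatorname{esssup}$ over $\omega$: it is needed to bound the $\ell$ probabilities at the shifted realizations $\sigma^{in}\omega$ (which vary with $n$) uniformly, not merely to be "promoted to a pointwise statement" on a full-measure set as in your last paragraph. I would redo the fluctuation step with this block decomposition; the rest of your argument then stands.
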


\begin{proof}
  As in the proof of Proposition \ref{prop:alpha<1}, we check the
  hypothesis of Theorem~\ref{thm:seq_levy_FLT} with
  $T_n = T_{\sigma^{n-1} \omega}$, $\mu = \nu^\omega$ for $\bP$-a.e.
  $\omega \in \Omega$. We will see that~\eqref{eqn:seq_levy} follows from
  \eqref{eqn:alpha_ge_1} and \eqref{eqn:ergodicity}.
  
  Using the equivariance of
  $\left\{ \nu^\omega \right\}_{\omega \in \Omega}$, we see that condition
  \eqref{eqn:seq_levy} is implied by~\eqref{eqn:alpha_ge_1}
  and~\eqref{eqn:condition} below:
  \begin{equation}\label{eqn:condition}
    \lim_{\epsilon \to 0} \limsup_{n \to \infty} \nu^\omega \left( \sup_{1
        \le k \le n \ell} \left| \frac{1}{b_n } \sum_{j=0}^{k - 1} \left[
          \phi \circ T_\omega^j \one_{\left\{| \phi \circ T_\omega^j
              | \le \epsilon b_n \right\}} - \bE_{\nu^{\sigma^j
              \omega}}(\phi \one_{\left\{ |\phi | \le \epsilon b_n
            \right\}})\right]\right| \ge \delta \right) = 0. 
  \end{equation}
  We next show that condition \eqref{eqn:ergodicity} implies
  \eqref{eqn:condition}.

  Since
\begin{multline*}
\left\{ \sup_{1 \le k \le n \ell} \left| \frac{1}{b_{n}} \sum_{j=0}^{k-1} \left[  \phi \circ T_\omega^j \one_{\left\{| \phi \circ T_\omega^j | \le \epsilon b_n \right\}} - \bE_{\nu^{\sigma^j \omega}}(\phi \one_{\left\{ |\phi | \le \epsilon b_n \right\}}) \right] \right| \ge \delta \right\}  \\
\subset \bigcup_{i=0}^{\ell -1} \left\{ \sup_{in < k \le (i+1)n} \left|\frac{1}{b_{n}} \sum_{j=in}^{k-1} \left[ \phi \circ T_\omega^j \one_{\left\{| \phi \circ T_\omega^j | \le \epsilon b_n \right\}} - \bE_{\nu^{\sigma^j \omega}}(\phi \one_{\left\{ |\phi | \le \epsilon b_n \right\}})\right] \right| \ge \frac{\delta}{\ell} \right\},
\end{multline*}
we obtain that, using again the equivariance, for $\bP$-a.e.~$\omega \in \Omega$, 

\begin{multline*}
\nu^\omega \left( \sup_{1 \le k \le n \ell} \left| \frac{1}{b_{n}} \sum_{j=0}^{k-1} \left[  \phi \circ T_\omega^j \one_{\left\{| \phi \circ T_\omega^j | \le \epsilon b_n \right\}} - \bE_{\nu^{\sigma^j \omega}}(\phi \one_{\left\{ |\phi | \le \epsilon b_n \right\}}) \right] \right| \ge \delta \right) \\
\le \sum_{i=0}^{\ell-1} \nu^{\sigma^{in} \omega} \left( \sup_{1 \le k \le n} \left|\frac{1}{b_{n}} \sum_{j=0}^{k-1} \left[  \phi \circ T_{\sigma^{in}\omega}^j \one_{\left\{| \phi \circ T_{\sigma^{in}\omega}^j | \le \epsilon b_n \right\}} - \bE_{\nu^{\sigma^j (\sigma^{in}\omega)}}(\phi \one_{\left\{ |\phi | \le \epsilon b_n \right\}}) \right] \right| \ge \frac{\delta}{\ell}\right) \\
\le \ell \, \cdot  \,\underset{\omega' \in \Omega}{\rm{esssup}} \, \nu^{\omega'} \left( \max_{1 \le k \le n} \left| \frac{1}{b_n} \sum_{j=0}^{k-1} \left[ \phi \circ T_{\omega'}^{j} \one_{\left\{ | \phi \circ T_{\omega'}^j| \le \epsilon b_n \right\}} - \bE_{\nu^{\sigma^j \omega'}}( \phi \one_{\left\{ | \phi | \le \epsilon b_n\right\}})\right] \right| \ge \frac{\delta}{\ell} \right).
\end{multline*}
Thus, condition \eqref{eqn:ergodicity} implies \eqref{eqn:condition}, which concludes the proof.
\end{proof}

The analogue for the convergence to a stable law is the following.

\begin{prop}\label{prop:quenched_stable_usual}
  Suppose that for $\bP$-a.e.~$\omega \in \Omega$, we have 
  \[
    N_n^\omega((0,1] \times \cdot) \dto N_{(\alpha)}((0,1] \times \cdot).
  \]

  If $\alpha \in (0,1)$ (so $c_n = 0$), we require in addition that
  \begin{equation}\label{eqn:seq_levy_01}
    \lim_{\epsilon \to 0} \limsup_{n \to \infty} \frac{1}{b_n}
    \sum_{j=0}^{n - 1} \bE_{\nu^{\sigma^j \omega}}( | \phi|
    \mathds{1}_{\left\{ | \phi | \le \epsilon b_n\right\}}) = 0
  \end{equation}

  If $\alpha \in [1,2)$, we require instead of~\eqref{eqn:seq_levy_01}
  that for all $\epsilon >0$,
  \[
    \lim_{n \to \infty} \left| \frac{1}{b_n} \left[\sum_{j=0}^{n - 1}
      \bE_{\nu^{\sigma^j \omega}}( \phi \mathds{1}_{\left\{ | \phi | \le
          \epsilon b_n\right\}}) - (c_n - b_n c_\alpha(\epsilon)) \right]\right|= 0
  \]
  and
  \[
    \lim_{\epsilon \to 0} \limsup_{n \to \infty} \nu^{\omega} \left( \left|
        \frac{1}{b_n} \sum_{j=0}^{n-1} \left[ \phi \circ T_\omega^{j}
          \mathds{1}_{\left\{ | \phi \circ T_\omega^j| \le \epsilon b_n
            \right\}} - \bE_{\nu^{\sigma^j \omega}}( \phi
          \mathds{1}_{\left\{ | \phi | \le \epsilon b_n\right\}})\right]
      \right| \ge \delta \right) =0.
  \]

  Then
  \[
    \frac{1}{b_n}\left[\sum_{j=0}^{n-1}\phi\circ T_{\omega}^j - c_n \right]\dto
    X_{(\alpha)}(1)
  \]
  under the probability measure $\nu^\omega$ for $\bP$-a.e.
  $\omega \in \Omega$.
\end{prop}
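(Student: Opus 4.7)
The plan is to deduce the statement directly from Theorem~\ref{thm:seq_levy_was-TK-thm1.3}, applied for $\bP$-a.e.~$\omega\in\Omega$ with the choices $T_n=T_{\sigma^{n-1}\omega}$ and $\mu=\nu^\omega$. The Poisson point process hypothesis in that theorem, $N_n((0,1]\times\cdot)\dto N_{(\alpha)}((0,1]\times\cdot)$, is exactly the first hypothesis of the proposition (with $N_n$ there replaced by $N_n^\omega$), so the only thing to check is the small-jump control~\eqref{eqn:seq_levy_SLT}, i.e.
\[
\lim_{\epsilon\to 0}\limsup_{n\to\infty}\nu^\omega\Bigl(\Bigl|\tfrac{1}{b_n}\sum_{j=0}^{n-1}\phi\circ T_\omega^j\,\one_{\{|\phi\circ T_\omega^j|\le\epsilon b_n\}}-(c_n-c_\alpha(\epsilon))\Bigr|\ge\delta\Bigr)=0,
\]
for $\bP$-a.e.~$\omega$. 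This is the analogue of~\eqref{eqn:seq_levy} from Theorem~\ref{thm:seq_levy_FLT}, but without the sup over $t\in[0,\ell]$, so the argument is simpler than those of Propositions~\ref{prop:alpha<1} and~\ref{prop:alpha_ge_1}.

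For $\alpha\in(0,1)$, recall $c_n=0$ and $c_\alpha(\epsilon)=0$. By Markov's inequality together with the equivariance $(T_\omega^j)_*\nu^\omega=\nu^{\sigma^j\omega}$, the probability above is bounded by
\[
\frac{1}{\delta b_n}\sum_{j=0}^{n-1}\bE_{\nu^{\sigma^j\omega}}\bigl(|\phi|\,\one_{\{|\phi|\le\epsilon b_n\}}\bigr),
\]
which tends to $0$ as $n\to\infty$ and then $\epsilon\to 0$ by~\eqref{eqn:seq_levy_01}. This delivers the hypothesis of Theorem~\ref{thm:seq_levy_was-TK-thm1.3} and hence the stable limit.

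For $\alpha\in[1,2)$, I would write, using equivariance as above,
\[
\tfrac{1}{b_n}\sum_{j=0}^{n-1}\phi\circ T_\omega^j\,\one_{\{|\phi\circ T_\omega^j|\le\epsilon b_n\}}-(c_n-c_\alpha(\epsilon))=A_n^\omega(\epsilon)+B_n^\omega(\epsilon),
\]
where
\[
A_n^\omega(\epsilon):=\tfrac{1}{b_n}\sum_{j=0}^{n-1}\Bigl[\phi\circ T_\omega^j\,\one_{\{|\phi\circ T_\omega^j|\le\epsilon b_n\}}-\bE_{\nu^{\sigma^j\omega}}\bigl(\phi\,\one_{\{|\phi|\le\epsilon b_n\}}\bigr)\Bigr],
\]
\[
B_n^\omega(\epsilon):=\tfrac{1}{b_n}\sum_{j=0}^{n-1}\bE_{\nu^{\sigma^j\omega}}\bigl(\phi\,\one_{\{|\phi|\le\epsilon b_n\}}\bigr)-(c_n-c_\alpha(\epsilon)).
\]
The deterministic term $B_n^\omega(\epsilon)$ tends to $0$ for every $\epsilon>0$ and $\bP$-a.e.~$\omega$ by the first additional hypothesis for $\alpha\in[1,2)$, so in particular $\nu^\omega(|B_n^\omega(\epsilon)|\ge\delta/2)\to 0$. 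For $A_n^\omega(\epsilon)$, the second additional hypothesis directly gives $\lim_{\epsilon\to 0}\limsup_{n\to\infty}\nu^\omega(|A_n^\omega(\epsilon)|\ge\delta/2)=0$. Combining the two estimates via a union bound yields~\eqref{eqn:seq_levy_SLT}, and Theorem~\ref{thm:seq_levy_was-TK-thm1.3} then produces the desired convergence $\tfrac{1}{b_n}\sum_{j=0}^{n-1}\phi\circ T_\omega^j-c_n\dto X_{(\alpha)}(1)$ under $\nu^\omega$.

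There is no genuine obstacle; the only mild point is to make sure the bookkeeping for $B_n^\omega(\epsilon)$ is valid for $\bP$-a.e.~$\omega$ simultaneously for all rational $\epsilon,\delta>0$, which is handled by taking a countable intersection of full-measure sets before the final diagonal ``$\limsup_n$ then $\epsilon\to 0$'' limit.
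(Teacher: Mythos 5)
Your proposal is correct and follows essentially the same route as the paper, which simply invokes Theorem~\ref{thm:seq_levy_was-TK-thm1.3} and remarks that the verification of the small-jump condition is analogous to Propositions~\ref{prop:alpha<1} and~\ref{prop:alpha_ge_1}; your Markov-plus-equivariance bound for $\alpha\in(0,1)$ and your $A_n^\omega+B_n^\omega$ splitting for $\alpha\in[1,2)$ are exactly the (simpler, sup-free) versions of those arguments that the authors have in mind.
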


\begin{proof}
  We check the conditions of Theorem~\ref{thm:seq_levy_was-TK-thm1.3}.

  The proof for $\alpha\in(0,1)$ is similar to the proof of
  Proposition~\ref{prop:alpha<1}, the proof of the case $\alpha\in[1,2)$ is
  similar to the proof of Proposition~\ref{prop:alpha_ge_1}.
\end{proof}


\subsection{The annealed transfer operator}

We assume that the random dynamical system
$F:\Omega \times [0,1]\to \Omega \times [0,1]$, 
\[
F(\omega, x) =(\sigma \omega, T_\omega (x)) 
\]
which can also be viewed as
a Markov process on $[0,1]$, has a stationary measure $\nu$ with density
$h$. The map $F: \Omega \times [0,1] \to \Omega \times [0,1]$ will preserve
$\bP\times \nu$. Recall that $\bP:=\{(p_1, \dots, p_m)\}^\Z$.

We use the notation $P_{\mu,i}$ for the transfer operator of
$T_i:[0,1]\to[0,1]$ with respect to a measure $\mu$ on $[0,1]$, i.e.
\[
  \int f \cdot g\circ T_i d\mu =\int (P_{\mu,i} f) g d\mu, \text{ for all
    $f\in L^{1} (\mu)$, $g\in L^{\infty} (\mu)$}.
\]

The annealed transfer operator is defined by
\[
  P_{\mu} (f):=\sum_{i=1}^{m}p_i P_{\mu,i} (f)
\]
 with adjoint
\[
  U(f):=\sum_{i=1}^m p_i f\circ T_{i}
\]
which satisfies the duality relation
\[
  \int f (g\circ U) d\mu =\int (P_{\mu} f) g d\mu, \text{ for all
    $f\in L^{1} (\mu)$, $g\in L^{\infty}(\mu)$}.
\]

As above, we assume there are sample measures
$d \nu^\omega = h_{\omega} d x $ on each fiber $[0,1]$ of the skew product
such that
\[
  P_{\omega} h_{\omega}=h_{\sigma \omega}
\]
where $P_{\omega}$ is the transfer operator of $T_{\omega_0}$ with respect
to the Lebesgue measure.

Therefore
\[
\nu (A)=\int_{\Omega} [\int_A h_{\omega} dx] d\bP(\omega)
\]
for all Borel sets $A\subset [0,1]$.


\subsection{Decay of correlations}

We now consider the decay of correlations properties of the annealed
systems associated to maps satisfying (LY), (Dec) and (Min) and  intermittent maps.

By \cite[Proposition 3.1]{ANV15} in the
setting of maps satisfying  (LY), (Dec) and (Min) we have exponential decay in $BV$ against
$L^1$: there are $C>0$, $0<\lambda<1$ such that
\[
  \left| \int f g\circ U^n d\nu -\int f d\nu \int g d\nu \right|\le C \lambda^n
  \|f\|_{BV} \|g\|_{L^1 (\nu)}
\]

In the setting of intermittent maps, by \cite[Theorem
1.2]{Bahsoun:2016aa}, we have polynomial decay in H\"older against
$L^{\infty}$: there exists $C>0$ such that
\[
  \left| \int f g\circ U^n d\nu -\int f d\nu \int g d\nu \right|\le C
  n^{1-\frac{1}{\gamma_{min}}} \|f\|_{\text{H\"older}} \|g\|_{L^{\infty}
    (\nu)}.
\]

We now consider a useful property satisfied by  our class of random uniformly expanding maps. 

\begin{defin}[{\bf Condition U}]\label{def:conditionU}
  We assume that almost each $\nu^\omega$ is absolutely continuous with
  respect to the Lebesgue measure $m$, and
    \begin{align}\label{eqn:density_bounded}
      &\text{for some $C >0$,\qquad } \bP\text{-a.e.~} \omega \in \Omega
        \implies C^{-1 } \le
        h_\omega:=\frac{d\nu^\omega}{d
        \nu} \le C, \: m {\text{-a.e.}}\\
      \label{eqn:density_holder}
      & \text{the map } \omega \in \Omega \mapsto h_\omega \in L^\infty(m)
        \text{ is H\"older continuous.}
    \end{align}
    Consequently, the stationary measure $\nu$ is also absolutely
    continuous with respect to $m$, with density $h \in L^\infty(m)$ given
    by $h(x) = \int_\Omega h_\omega(x) \bP(d\omega)$ and satisfying
    \eqref{eqn:density_bounded}.
\end{defin}

\begin{lem}
  Properties {\bf (LY)}, {\bf (Min)} and {\bf (Dec)} imply Condition U. Namely,
  there exists a unique H\"older map
  $\omega \in \Omega \mapsto h_\omega \in {\rm BV}$ such that
  $P_\omega h_\omega = h_{\sigma \omega}$ and \eqref{eqn:density_bounded},
  \eqref{eqn:density_holder} are satisfied by~\cite{ANV15}.
\end{lem}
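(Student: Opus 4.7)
The plan is to build $h_\omega$ explicitly as the $L^1(m)$ limit of the backward pullbacks
\[
  h_n(\omega) := P_{\omega_{-1}} P_{\omega_{-2}} \cdots P_{\omega_{-n}} \one,
\]
and then to read off each clause of Condition U from the three hypotheses. I would first show that $\{h_n(\omega)\}_n$ is Cauchy in $\mathrm{BV}$, uniformly in $\omega$: writing
\[
  h_{n+1}(\omega) - h_n(\omega) = P_{\omega_{-1}}\cdots P_{\omega_{-n}}\bigl(P_{\omega_{-n-1}}\one - \one\bigr),
\]
the bracket has zero $m$-mean (transfer operators preserve Lebesgue integrals) and $\mathrm{BV}$-norm at most $M+1$ by (LY), so (Dec) applied to the outer cocycle $P_{\sigma^{-n}\omega}^n = P_{\omega_{-1}}\cdots P_{\omega_{-n}}$ yields $\|h_{n+1}(\omega)-h_n(\omega)\|_{\mathrm{BV}} \le C\vartheta^n$, where $\vartheta \in (0,1)$ is the rate in (Dec). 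Summing, the limit $h_\omega \in \mathrm{BV}$ exists with $\int h_\omega\, dm = 1$ and $\sup_\omega \|h_\omega\|_{\mathrm{BV}} < \infty$.

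Next I would verify equivariance and the two-sided bound. Applying $P_{\omega_0}$ to $h_n(\omega)$ gives $h_{n+1}(\sigma\omega)$, so passing to the limit produces $P_\omega h_\omega = h_{\sigma\omega}$. The lower bound $h_\omega \ge c$ is immediate from (Min) since each $h_n(\omega) = P_{\sigma^{-n}\omega}^n \one$ satisfies that estimate, and the upper bound follows from the $\mathrm{BV}$ bound via the continuous embedding $\|\cdot\|_\infty \le \|\cdot\|_{\mathrm{BV}}$. The stationary density $h := \int_\Omega h_\omega\, d\bP(\omega)$ inherits the same two-sided bound, so the ratio $h_\omega / h$ --- which is the $d\nu^\omega/d\nu$ appearing in the statement of Condition U --- also lies in $[C^{-1}, C]$, yielding \eqref{eqn:density_bounded}.

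For the H\"older estimate \eqref{eqn:density_holder}, suppose $d_\theta(\omega, \omega') \le \theta^n$, so $\omega_k = \omega'_k$ for all $|k|\le n$. Iterating equivariance backwards gives
\[
  h_\omega - h_{\omega'} \;=\; P_{\omega_{-1}} \cdots P_{\omega_{-n}}\bigl(h_{\sigma^{-n}\omega} - h_{\sigma^{-n}\omega'}\bigr),
\]
where the bracket integrates to zero against $m$ and has uniformly bounded $\mathrm{BV}$-norm by the first step. Thus (Dec) delivers $\|h_\omega - h_{\omega'}\|_{\mathrm{BV}} \le C\vartheta^n$, i.e.\ H\"older regularity into $\mathrm{BV}$ (hence into $L^\infty$) with exponent $\log_\theta \vartheta > 0$. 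The same zero-mean manipulation, applied to the difference of any two candidate equivariant families normalized to integral one, forces uniqueness by letting $n \to \infty$. I do not expect a genuine obstacle: this is the Keller--Liverani-type construction transported to the random context, essentially the argument already sketched for Example~\ref{beta}. The only point that requires discipline is that (Dec) is a spectral gap statement on the \emph{zero-mean} subspace, so every invocation of (Dec) must be preceded by rewriting the quantity of interest as the pullback of a zero-mean $\mathrm{BV}$ function, which is exactly where (LY) (uniform $\mathrm{BV}$ bounds) and (Min) (the normalization $\int h_\omega\, dm = 1$) are both used.
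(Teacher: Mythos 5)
Your proposal is correct and follows essentially the same route as the paper: construct $h_\omega$ as the BV-limit of the pullbacks $P_{\sigma^{-n}\omega}^n\one$, get the Cauchy/H\"older estimates by applying {\bf (Dec)} to zero-mean differences whose BV norms are controlled by {\bf (LY)}, and read off the lower and upper bounds from {\bf (Min)} and the uniform BV bound respectively. Your explicit treatment of uniqueness and of the passage from $d\nu^\omega/dm$ to $d\nu^\omega/d\nu$ is slightly more careful than the paper's, but the argument is the same.
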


\begin{proof}
  By {\bf (Dec)}, and as all the operators $P_\omega$ are Markov
  with respect to $m$, we have
  \begin{equation} \label{eqn:cauchy} \|P_{\sigma^{-(n+k)}\omega}^{n+k}
    \one - P_{\sigma^{-n} \omega}^n \one \|_{\rm BV} \le C
    \kappa^n \|\one - P_{\sigma^{-(n+k)}\omega}^k \one \|_{\rm
      BV} \le C \kappa^n,
  \end{equation}
  which proves that $(P_{\sigma^{-n} \omega}^n \one)_{n \ge 0}$ is a
  Cauchy sequence in BV converging to a unique limit $h_\omega \in BV$
  satisfying $P_\omega h_\omega = h_{\sigma \omega}$ for all $\omega$. The
  lower bound in \eqref{eqn:density_bounded} follows from the condition
  {\bf (Min)}, while the upper bound is a consequence of the uniform
  Lasota-Yorke inequality {\bf (LY)}, as actually the family
  $\left\{h_\omega\right\}_{\omega \in \Omega}$ is bounded in BV. To prove
  the H\"older continuity of $\omega \mapsto h_\omega$ with respect to the
  distance $d_\theta$, we remark that if $\omega$ and $\omega'$ agree in
  coordinates $|k| \le n$, then
  \[
    \| h_\omega - h_{\omega'}\|_{\rm BV} =
    \|P_{\sigma^{-k}\omega}^k(h_{\sigma^{-k}\omega} - h_{\sigma^{-k}
      \omega'})\|_{\rm BV} \le C \theta^n \le C d_\theta(\omega, \omega').
  \]
\end{proof}

 \begin{rem}
  Note that the density $h$ of the stationary measure $\nu$ also belongs to
  BV and is uniformly bounded from above and below, as the average of
  $h_\omega$ over $\Omega$.
\end{rem}


\subsubsection{The sample measures $h_\omega$}

The regularity properties of the sample measures $h_{\omega}$, both as functions of $\omega$ and as 
functions of $x$ on $[0,1]$ play a key role in our estimates. 
We will first recall how the sample measures are constructed. Suppose
$\omega:=(\ldots, \omega_{-1}, \omega_0,\omega_1,\ldots, \omega_n,\ldots,)$
and define $h_n (\omega)=P_{\omega_{-1}}\ldots P_{\omega_{-n} }1$ as a
sequence of functions on the fiber $I$ above $\omega$. In the setting both of random uniformly expanding
maps  and of  intermittent maps $\{h_n (\omega) \}$ is
a Cauchy sequence and has a limit $h_{\omega}$.

In  the  setting of  random expanding maps,   $h_\omega$  is uniformly  $BV$  in
$\omega$ as
\[
  \|h_n (\omega)-h_{n+1} (\omega)\|_{BV} \le \|P_{\omega_{-1}}
  P_{\omega_{-2}} \ldots P_{\omega_{-n}} (1-P_{\omega_{-n-1} }1 )\|_{BV}
  \le C \lambda^{n}.
\]

In the setting of intermittent maps with
$\gamma_{max}=\max_{1\le i \le m} \{\gamma_i\}$, the densities $h_\omega$
lie in the cone
\[
  \begin{array}{ll} L:=\big\{
                  f\in \mathcal{C}^0((0,1])\cap L^1(m), &
                f\geq 0,\ f \text{ non-increasing}, \\
                & X^{\gamma_{max}+1}f \text{ increasing},
                 f(x)\leq ax^{-\gamma_{max}}m(f)\big\}\end{array}
\] 
where $X(x) = x$ is the identity function and $m(f)$ is the integral of $f$
with respect to $m$. In \cite{AimHuNicTor_2015} it is proven that for a
fixed value of $\gamma_{max} \in(0,1)$, provided that the constant $a$ is
big enough, the cone $L$ is invariant under the action of all transfer
operators $P_{\gamma_i}$ with $0<\gamma_i \leq\gamma_{max}$ and so (see
e.g.~\cite[Proposition 3.3]{NPT}, which summarizes results
of~\cite{NTV18})
\[
  \|h_n (\omega)-h_{n+k} (\omega)\|_{L^1(m)} \le \|P_{\omega_{-1}}
  P_{\omega_{-2}} \ldots P_{\omega_{-n}} (1-P_{\omega_{-n-1} } \dots
  P_{\omega_{-n-k} }1 )\|_{L^1(m)}
  \]
  \[
   \le C_{\gamma_{max} }
  n^{1-\frac{1}{\gamma_{max}}} (\log n)^{\frac{1}{\gamma_{max}}}
\]
whence $h_\omega\in L^1(m)$. In later arguments we will use the
approximation
\begin{equation}\label{intermittent_approx}
  \|h_n(\omega)-h_\omega\|_{L^1(m)} \le C_{\gamma_{max} }
  n^{1-\frac{1}{\gamma_{max}}}
  (\log n)^{\frac{1}{\gamma_{max}}}.
\end{equation}
We mention also the recent paper~\cite{Korepanov_Leppanen} where the
logarithm term in Equation~\eqref{intermittent_approx} is shown to be
unnecessary and moment estimates are given.

We now show that $h_{\omega}$ is a H\"older function of $\omega$ on
$(\Omega, d_{\theta})$ in the setting of random expanding maps.

For $\theta \in (0,1)$, we introduce on $\Omega$ the symbolic metric 
\[
  d_\theta(\omega, \omega') = \theta^{s(\omega, \omega')}
\]
where
$s(\omega, \omega') = \inf \left\{k \ge 0 \, : \, \omega_\ell \neq
  \omega_\ell' \text{ for some } |\ell|\le k\right\}$.


Suppose $\omega$, $\omega'$ agree in coordinates $|k| \le n$ (i.e.
backwards and forwards in time) so that
$d_{\theta} (\omega, \omega^{'}) \le \theta^n$ in the symbolic metric on
$\Omega$. Then
\[
  \|h_{\omega}-h_{\omega'}\|_{\rm BV} \le \|P_{\omega_{-1}} P_{\omega_1} \ldots
  P_{\omega_{-n+1}}(h_{(\sigma^{-n+1} \omega)}-h_{(\sigma^{-n+1}
    \omega')})\|_{\rm BV} 
    \]
    \[
     \le C \lambda^{n-1} = C' d_{\theta}
  (\omega,\omega')^{\log_\theta \lambda}
\]
Recall that $\|f\|_{\infty} \le C \|f\|_{\rm BV}$, see e.g. \cite[Lemma
2.3.1]{BG97}.

That is, Condition U (see Definition~\ref{def:conditionU}) holds for
random expanding maps.

The map $\omega \mapsto h_{\omega}$ is not H\"older in the setting of
intermittent maps; in several arguments we will use the regularity
properties of the approximation $h_n(\omega)$ for $h_\omega$.

However, on intervals that stay away from zero, all functions in the cone
$L$ are comparable to their mean. Therefore, on sets that are uniformly
away from zero, all the above densities/measures ($d \nu = h dx$,
$h_\omega$, $h_n(\omega)$) are still comparable.

Namely,
\begin{equation}\label{eq:intermittent-densities-comparable}
  \begin{aligned} \text{ for any } \delta \in (0,1) \text{ there is }
C_\delta > 0 \text{ such that }\\ h \in L \implies 1/C_\delta < h(x)/m(h) <
C_\delta \text{ for } x\in [\delta, 1]
  \end{aligned}
\end{equation} Indeed, $h/m(h)$ is bounded below by~\cite[Lemma
2.4]{LivSauVai}, and the upper bound follows from the definition of the
cone.

\section{Ancilliary  Results}\label{sec:Ancilliary_Results}

Let $x_0 \in [0,1]$, and, for $\alpha \in (0,2)$, recall we define the function
$\phistar(x) = |x - x_0|^{ - \frac{1}{\alpha}}$.
It is easy to see that $\phistar$ is regularity varying with index $\alpha$ and that $p=1$.



\subsection{Exponential law and point process results} \label{ssec:exp_process}

We denote by $\cJ$ the family of all finite unions of intervals of the form
$(x, y]$, where $-\infty \le x < y \le \infty$ and $0 \notin [x,y]$.


For a measurable subset $U \subset [0,1]$, we define the hitting time of
$(\omega, x) \in \Omega \times [0,1]$ to $U$ by
\begin{equation} \label{eqn:hitting_times}
R_U(\omega)(x) := \inf \left\{ k \ge 1 \, : \, T_\omega^k(x) \in U \right\}.
\end{equation} 

Recall that $\phistar(x):=d(x,x_0)^{-\frac{1}{\alpha}}$ depends on the
choice of $x_0 \in [0,1]$. Recall also that
\[
  \cD = \cup_{n \ge 0} \cup_{\omega \in \Omega} \partial \cA_\omega^n
\]
the set of discontinuities of all the maps $T_\omega^n$.

\begin{thm}\label{thm:return}
  In the setting of Section~\ref{sec:expanding_iid}, 
  assume {\bf (LY)}, {\bf (Min)} and {\bf (Dec)}. If $x_0 \notin \cD$ is non-recurrent,
  then, for $\bP$-a.e.~$\omega \in \Omega$ and all $0 \le s < t$,
  \[
    \lim_{n \to \infty} \nu^{\sigma^{\lfloor ns \rfloor}\omega}
    \left(R_{A_n}(\sigma^{\lfloor ns \rfloor}\omega) > \lfloor
      n(t-s) \rfloor\right) = e^{-(t-s) \Pi_\alpha(J)}.
  \]
  where $A_n := \phistar^{-1}(b_n J)$, $J \in \cJ$.
\end{thm}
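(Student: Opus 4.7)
By $\sigma$-invariance of $\bP$ and a countable-intersection argument over rational $(s,t)$ and $J \in \cJ$ with rational endpoints, it suffices to show that there is a full-$\bP$-measure set $\Omega_0$ (depending on $J$) such that for every $\omega' \in \Omega_0$ and every rational $u>0$,
\[
  \nu^{\omega'}\bigl(R_{A_n}(\omega') > \lfloor nu\rfloor\bigr) \;\to\; e^{-u\,\Pi_\alpha(J)}.
\]
Specializing $\omega' = \sigma^{\lfloor ns\rfloor}\omega$, the fact that the forward orbit $\{\sigma^k\omega\}_{k\ge 0}$ lies in $\Omega_0$ for $\omega$ in a full-measure set, together with monotonicity in $u$ and interior approximation of $J$, yields the full statement. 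Write $Y_j := \one_{A_n}\circ T_{\omega'}^{j-1}$ and $N_k := \sum_{j=1}^k Y_j$, so that $\{R_{A_n}>k\}=\{N_k=0\}$; the aim is to show $N_{\lfloor nu\rfloor}\dto \mathrm{Poisson}(u\,\Pi_\alpha(J))$ under $\nu^{\omega'}$ by a Chen--Stein argument.

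\textbf{First moment and long-range decorrelation.} Because $\phistar \ge 0$ one has $p=1$, and the tail asymptotic \eqref{eqn:tail2} applied to the finitely many constituent intervals of $J$ gives $n\,\nu(A_n)\to \Pi_\alpha(J)$. Using the equivariance $(T_{\omega'}^j)_*\nu^{\omega'}=\nu^{\sigma^j\omega'}$,
\[
  \bE_{\nu^{\omega'}}[N_{\lfloor nu\rfloor}] = \sum_{j=0}^{\lfloor nu\rfloor-1} \nu^{\sigma^j\omega'}(A_n),
\]
and this converges to $u\,\Pi_\alpha(J)$ for $\bP$-a.e.~$\omega'$ by Birkhoff's theorem applied to $F(\omega) = h_\omega(x_0)$, which is H\"older in $\omega$ (Condition U) with $\bP$-mean $h(x_0)$. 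For the long-range cross terms at lag $\ell = k - j$, condition (Dec) applied to the mean-zero part of $\one_{A_n}h_{\sigma^{j-1}\omega'}\in {\rm BV}$ gives
\[
  \bigl|\bE_{\nu^{\omega'}}[Y_j Y_k] - \nu^{\sigma^{j-1}\omega'}(A_n)\,\nu^{\sigma^{k-1}\omega'}(A_n)\bigr| \le C\,\theta^\ell,
\]
with $C$ uniform thanks to the uniform BV-bound on $\one_{A_n}h_{\omega''}$. Choosing a cutoff $g_n = c\log n$ with $c\log(1/\theta)>1$, the long-range contribution to the Chen--Stein distance is $O(n\,\theta^{g_n}) = o(1)$.

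\textbf{Main obstacle: non-periodicity eliminates short-range clustering.} What remains is the short-range cluster sum $\sum_{\ell=1}^{g_n}\nu^{\omega'}\bigl(A_n \cap (T_{\omega'}^\ell)^{-1}A_n\bigr)$. Since $x_0\notin\cD$ each map $T_\omega^\ell$ is continuous at $x_0$; since $x_0$ is not periodic in the sense of Definition~\ref{def:preperiodic} and the alphabet is finite, $\delta_\ell := \min_{\omega}|T_\omega^\ell(x_0)-x_0| > 0$ for every $\ell\ge 1$. Because $A_n\subset B(x_0, C/n)$ and uniform expansion gives $T_{\omega'}^\ell(A_n)\subset B(T_{\omega'}^\ell(x_0),C\lambda^\ell/n)$, the intersection $A_n\cap (T_{\omega'}^\ell)^{-1}A_n$ is empty whenever $C(1+\lambda^\ell)/n < \delta_\ell$. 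Balancing the two cutoffs is the decisive quantitative step: $g_n$ must be large enough for (Dec) to yield decoupling yet small enough that this geometric disjointness holds uniformly in $\ell \le g_n$; this is feasible by choosing $g_n$ carefully in terms of $\lambda$ and $\theta$, if necessary passing to a suitable iterate of the system before invoking the decay estimate. With both regimes controlled, Chen--Stein gives $N_{\lfloor nu\rfloor}\dto \mathrm{Poisson}(u\,\Pi_\alpha(J))$, hence $\nu^{\omega'}(N_{\lfloor nu\rfloor}=0) \to e^{-u\,\Pi_\alpha(J)}$, as required.
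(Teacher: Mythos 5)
Your overall architecture (Chen--Stein Poisson approximation for the counting variable $N_{\lfloor nu\rfloor}$, split into a first-moment term, a short-range cluster term, and a long-range decorrelation term) is a legitimate alternative to the paper's route, which instead iterates the exact identity $\nu^{\omega}(R_U(\omega)>k)=(1-\nu^{\sigma\omega}(U))\nu^{\sigma\omega}(R_U(\sigma\omega)>k-1)-\nu^{\sigma\omega}(U)c_{\sigma\omega}(k-1,U)$ and compares directly with the product $\prod_j(1-\nu^{\sigma^j\omega}(A_n))$. However, two of your three steps have genuine gaps.

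First, the first-moment convergence $\sum_{j=0}^{\lfloor nu\rfloor-1}\nu^{\sigma^j\omega'}(A_n)\to u\,\Pi_\alpha(J)$ does not follow from ``Birkhoff's theorem applied to $F(\omega)=h_\omega(x_0)$.'' The densities $h_\omega$ are only BV, so $h_\omega(x_0)$ is not well defined and need not be continuous at $x_0$; more importantly, the summands $\nu^{\sigma^j\omega'}(A_n)=\int_{A_n}h_{\sigma^j\omega'}\,dm$ involve an observable $\one_{A_n}$ that \emph{changes with} $n$, each term is $O(1/n)$, and there are $O(n)$ of them, so the pointwise ergodic theorem for a fixed function gives nothing. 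This is exactly why the paper proves Lemma~\ref{lemma:tech}: a quantitative spatial ergodic theorem for $n$-dependent observables, obtained from the H\"older continuity of $\omega\mapsto h_\omega$ in $L^\infty(m)$, a Chebyshev bound using exponential decay of correlations on the base $(\Omega,\bP,\sigma)$, Borel--Cantelli along the subsequence $n=\lfloor p^{1+\eta}\rfloor$, and an interpolation step. You need some version of this argument; a one-line appeal to Birkhoff does not close it.

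Second, your treatment of the short-range cluster term asserts that $A_n\cap(T_{\omega'}^\ell)^{-1}A_n=\emptyset$ for all $\ell\le g_n$ with $g_n\asymp\log n$, and claims this is ``feasible by choosing $g_n$ carefully.'' It is not, in general. Non-periodicity only gives $\delta_\ell:=\inf_{\omega}|T_\omega^\ell(x_0)-x_0|>0$ for each fixed $\ell$, with no quantitative lower bound in $\ell$ (it may decay superexponentially), while the image $T_{\omega'}^\ell(A_n)$ has diameter of order $\Lambda^\ell/n$ with $\Lambda=\sup|T'|>1$ (expansion enlarges $A_n$; your $\lambda^\ell$ should be an upper derivative bound). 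Disjointness for all $\ell\le g_n$ would therefore require $\Lambda^{g_n}/n\ll\delta_{g_n}$, which cannot be guaranteed when $g_n\gtrsim\log n$ is forced by the requirement $n\theta^{g_n}\to0$. The paper resolves precisely this tension differently: geometric disjointness is used only for lags $\ell<r(V_n)$, where $r(V_n)$ is the shortest return time of $V_n=\{|\phistar|>\epsilon b_n\}$ and is shown to tend to infinity \emph{at an unspecified rate}; for the intermediate lags $r(V_n)\le\ell\le N$ one instead bounds $\nu^{\sigma^j\omega}(V_n\cap(T^\ell)^{-1}V_n)$ by $\nu^{\sigma^j\omega}(V_n)\nu^{\sigma^{j+\ell}\omega}(V_n)+C\theta^\ell m(V_n)$ using {\bf (Dec)} (together with Lemma~\ref{th.bound-Pn}-type uniform BV control), so that the total short-range contribution is $O(m(V_n)^2nN+\theta^{r(V_n)})\to0$. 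Your proof needs this (or an equivalent) two-regime decomposition of the cluster sum; as written, the ``decisive quantitative step'' you identify is exactly the step that fails.
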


\begin{thm}\label{thm:return_intermittent}
  In the setting of intermittent maps assume that
  $\gamma_{max} <\frac{1}{3}$. Then for $m$-a.e.~$x_0$ for $\bP$-a.e.
  $\omega \in \Omega$ and all $0 \le s < t$,
  \[
    \lim_{n \to \infty} \nu^{\sigma^{\lfloor ns \rfloor}\omega}
    \left(R_{A_n}(\sigma^{\lfloor ns \rfloor}\omega) > \lfloor
      n(t-s) \rfloor\right) = e^{-(t-s) \Pi_\alpha(J)}.
  \]
  where $A_n := \phistar^{-1}(b_n J)$, $J \in \cJ$.
\end{thm}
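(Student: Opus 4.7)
The plan is to deduce the theorem from convergence of the hitting point process $N_n^\omega$ restricted to times in $(0, \tau]$ towards a Poisson process of intensity $\Pi_\alpha(J)$, reading off the exponential gap statistic at the end. First I would reduce to $s = 0$: by the $\sigma$-invariance of $\bP$ and the equivariance $(T_\omega^k)_* \nu^\omega = \nu^{\sigma^k \omega}$, the quantity at $\sigma^{\lfloor ns \rfloor}\omega$ has the same distributional content as at $\omega$, and a countable intersection over rational $s$ together with ergodicity of $\sigma$ reduces the claim to proving, for $\bP$-a.e.~$\omega$ and every $\tau > 0$,
\[
\nu^\omega\bigl(R_{A_n}(\omega) > \lfloor n\tau \rfloor\bigr) \to e^{-\tau \Pi_\alpha(J)}.
\]
Next I would record the tail scaling $n \nu(A_n) \to \Pi_\alpha(J)$: the set $A_n = \phistar^{-1}(b_n J)$ is a finite disjoint union of intervals around $x_0$, and~\eqref{eqn:tail2} applied to the regularly varying $\phistar$ (with $p = 1$) delivers the claim.

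Then I would rewrite $\{R_{A_n}(\omega) > \lfloor n\tau \rfloor\} = \{N_n^\omega((0, \tau] \times J) = 0\}$ and aim for Poisson convergence of the point process via Kallenberg's theorem. This requires two checks: first, convergence of the mean count
\[
\bE_{\nu^\omega}[N_n^\omega((0,\tau]\times J)] = \sum_{k=1}^{\lfloor n\tau\rfloor} \nu^{\sigma^{k-1}\omega}(A_n) \to \tau\, \Pi_\alpha(J),
\]
which I would obtain from the tail scaling together with the density comparability~\eqref{eq:intermittent-densities-comparable} (valid for $m$-a.e.~$x_0$, in particular $x_0 \ne 0$) and the Birkhoff theorem for $(\Omega, \bP, \sigma)$ applied to $\omega \mapsto h_\omega(x_0)$; and second, an avoidance estimate $\nu^\omega(N_n^\omega(B) = 0) \to e^{-(m \times \Pi_\alpha)(B)}$ for boxes $B$, which I would handle by a Chen--Stein block decomposition. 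Here I would cut the time interval $(0, n\tau]$ into blocks of length $\ell_n$ separated by short gaps, estimate the per-block visit probability by the mean, and control long-range dependence using the polynomial annealed decay of correlations $k^{1-1/\gamma_{min}}$ of Bahsoun--Liverani, transferring between annealed and quenched via~\eqref{eq:intermittent-densities-comparable}. Non-periodicity of $x_0$, which holds for $m$-a.e.~$x_0$, eliminates spurious short returns that would otherwise inflate the cluster index.

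The hard part will be the passage from annealed estimates to a quenched $\bP$-a.e.~statement. Unlike in the uniformly expanding setting of Theorem~\ref{thm:return}, where Condition~U supplies a H\"older dependence $\omega \mapsto h_\omega$, here $h_\omega$ varies only at the $L^1$ rate~\eqref{intermittent_approx}. To work around this I would replace $h_\omega$ by the finite-$n$ approximation $h_n(\omega)$ in the key second-moment estimates, using that on $[\delta, 1]$ all cone densities are uniformly comparable by~\eqref{eq:intermittent-densities-comparable}, and that for $m$-a.e.~$x_0$ the target $A_n$ sits in such a compact set for all large $n$. The quantitative hypothesis $\gamma_{max} < 1/3$ enters precisely here: since $1/\gamma_{min} \ge 1/\gamma_{max} > 3$, the correlation decay is summable faster than $k^{-2}$, making the variance contributions in the Chen--Stein estimate summable in $n$, and a Borel--Cantelli argument upgrades the annealed bound to a $\bP$-a.s.~limit. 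Combining the mean and avoidance limits yields $N_n^\omega((0,\tau] \times J) \dto \mathrm{Poisson}(\tau \Pi_\alpha(J))$ for $\bP$-a.e.~$\omega$, and evaluating at the event of no arrival gives the exponential hitting time law of the theorem.
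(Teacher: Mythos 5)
Your overall architecture (reduce to $s=0$ by equivariance, identify the event with an avoidance probability, prove the first-moment limit $\sum_j \nu^{\sigma^j\omega}(A_n)\to(t-s)\Pi_\alpha(J)$, then a blocking/decorrelation argument) matches the paper's in spirit; the paper uses the Hirata--Saussol--Vaienti decomposition of Lemmas~\ref{lem:return} and~\ref{th.return-time-error-estimate} rather than Chen--Stein, but that difference is cosmetic. Note, however, that a bare Birkhoff theorem for $\omega\mapsto h_\omega(x_0)$ does not give the first-moment limit for the moving target $A_n$: the paper runs a second-moment/Borel--Cantelli argument on the approximants $h^i_\omega$ (Lemma~\ref{lemma:tech_intermittent}) precisely because $\omega\mapsto h_\omega$ is only $L^1$-regular here.

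There are two genuine gaps. First, your treatment of short returns is the step that fails: in the uniformly expanding case non-periodicity of $x_0$ gives $r(V_n)\to\infty$ and the \emph{exponential} factor $\theta^{r(V_n)}$ kills the clustering term, but with only polynomial loss of memory that qualitative statement is useless. What is actually needed is the quantitative estimate $m\bigl(V_n\cap\{R_{V_n}(\omega)\le n/\log n\}\bigr)=o(n^{-1})$ for $m$-a.e.~$x_0$ and $\bP$-a.e.~$\omega$, which the paper imports from the return-time exponential law of \cite[Theorem 7.2]{HRY20} (itself requiring $\gamma_{max}<\frac13$) and then averages over the fiber index $j$ by a Birkhoff/Borel--Cantelli argument (Lemma~\ref{lem:return_intermittent_generic}). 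This is the hardest input to the theorem and your proposal does not supply it. Second, the decorrelation term: the available decay is for H\"older observables against $L^\infty$, so to decorrelate $\one_{A_n}$ one must mollify it by a $C^1$ function at a cost $n^\tau$ in norm and $n^{-\tau}$ in $L^1$; the resulting bound $C\,[\,n^{1-\tau}+n^{\tau+2-1/\gamma_{max}}\,]$ closes only when $1<\tau<1/\gamma_{max}-2$, i.e.\ $\gamma_{max}<\frac13$ --- this trade-off, not mere summability of $k^{1-1/\gamma}$, is where the threshold enters on this term. Moreover you cannot transfer the \emph{annealed} Bahsoun--Liverani decay to a quenched statement via the density comparability \eqref{eq:intermittent-densities-comparable}: that comparability controls measures, not correlations. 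One needs the quenched, $\omega$-uniform loss-of-memory estimate \eqref{intermittent_approx} coming from the cone invariance, which is what the paper's estimate of the term ${\rm (III)}$ actually uses.
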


\begin{thm}\label{thm:poisson_expanding}

  In the setting of Section~\ref{sec:expanding_iid}, 
  assume {\bf (LY)}, {\bf (Min)} and {\bf (Dec)}. If $x_0 \notin \cD$ is non-recurrent,
  then for $\bP$-a.e.~$\omega \in \Omega$, then
  \[
    N_n^\omega \dto N_{(\alpha)},
  \]
  under the probability $\nu^\omega$.
\end{thm}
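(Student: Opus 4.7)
The approach is to apply Kallenberg's theorem~\cite[Theorem~4.7]{Kal75}, as indicated in Section~\ref{sec:levy}, to reduce the weak convergence $N_n^\omega \dto N_{(\alpha)}$ to two conditions on rectangles $B=(s,t]\times J$ with $0\le s<t$ and $J\in\cJ$: convergence of intensities $\bE_{\nu^\omega}[N_n^\omega(B)]\to(t-s)\Pi_\alpha(J)$, and convergence of void probabilities $\nu^\omega(N_n^\omega(B)=0)\to e^{-(t-s)\Pi_\alpha(J)}$, both for $\bP$-a.e.~$\omega$.

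\textbf{Void probabilities.} Writing $A_n:=\phistar^{-1}(b_nJ)$, the event $\{N_n^\omega(B)=0\}$ asserts that $T_\omega^i(x)\notin A_n$ for $\lfloor ns\rfloor\le i<\lfloor nt\rfloor$. Setting $y=T_\omega^{\lfloor ns\rfloor}(x)$ and using the cocycle identity $T_\omega^{\lfloor ns\rfloor+k}(x)=T_{\sigma^{\lfloor ns\rfloor}\omega}^k(y)$ together with the equivariance $(T_\omega^{\lfloor ns\rfloor})_{*}\nu^\omega=\nu^{\sigma^{\lfloor ns\rfloor}\omega}$, this event becomes, up to the harmless extra condition $y\notin A_n$ (whose probability under $\nu^{\sigma^{\lfloor ns\rfloor}\omega}$ tends to $1$), the hitting-time event $\{R_{A_n}(\sigma^{\lfloor ns\rfloor}\omega)(y)>\lfloor n(t-s)\rfloor\}$ under $\nu^{\sigma^{\lfloor ns\rfloor}\omega}$. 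Theorem~\ref{thm:return}, applied with these parameters, yields exactly the desired limit.

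\textbf{Intensity.} From the definitions,
\[
\bE_{\nu^\omega}[N_n^\omega(B)]=\sum_{\lfloor ns\rfloor<j\le\lfloor nt\rfloor}\nu^{\sigma^{j-1}\omega}(A_n).
\]
Decompose $\nu^{\omega'}(A_n)=\nu(A_n)+\Phi_n(\omega')$, where the centered function $\Phi_n(\omega'):=\nu^{\omega'}(A_n)-\nu(A_n)$ satisfies $|\Phi_n|\le C\,m(A_n)=O(1/n)$ by Condition~U. The leading piece $(\lfloor nt\rfloor-\lfloor ns\rfloor)\,\nu(A_n)$ converges to $(t-s)\Pi_\alpha(J)$, since $n\nu(A_n)\to\Pi_\alpha(J)$ by the regularly varying property of $\phistar$ (Section~\ref{ssec:observables}). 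It remains to show that $\sum_{j=1}^{n}\Phi_n(\sigma^{j-1}\omega)\to 0$ for $\bP$-a.e.~$\omega$. Since $\omega\mapsto h_\omega$ is H\"older on $(\Omega,d_\theta)$ with values in ${\rm BV}$, the function $\Phi_n$ is H\"older with seminorm $O(m(A_n))$, and so is well approximated, with error exponentially small in $N$, by a cylinder function depending only on $\omega_{-N},\dots,\omega_N$. Exploiting the independence of disjoint coordinate blocks under the Bernoulli measure $\bP$, I obtain a variance bound $\mathrm{Var}_{\bP}\bigl(\sum_{j\le n}\Phi_n(\sigma^{j-1}\cdot)\bigr)=O(1/n)$, and a Borel--Cantelli argument along a suitable subsequence $n_k$ combined with a monotonicity step to fill in intermediate values of $n$ yields the required almost sure vanishing.

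\textbf{Main obstacle.} The principal difficulty is the intensity condition and not the void probability: one cannot invoke Birkhoff directly because the observable $\Phi_n$ depends on $n$ and shrinks in $L^\infty$ at rate $1/n$. The argument must combine the H\"older regularity of $\omega\mapsto h_\omega$ granted by Condition~U with the exponential decorrelation of cylinder functions under the Bernoulli shift, and it is precisely at this point that the assumptions of Section~\ref{sec:expanding_iid} are essential; this is also why the corresponding statement is delicate or fails outright in the intermittent setting of Example~\ref{intermittent}, where $\omega\mapsto h_\omega$ is not H\"older.
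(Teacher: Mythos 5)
Your reduction via Kallenberg's theorem is the right starting point and matches the paper, and your treatment of the intensity condition is essentially the paper's Lemma~\ref{lemma:tech} (H\"older dependence of $\omega\mapsto h_\omega$, a variance bound of order $1/n$ from decorrelation of the Bernoulli shift, Borel--Cantelli along a subsequence, then interpolation). The void probability for a \emph{single} rectangle $(s,t]\times J$ is also handled correctly by Theorem~\ref{thm:return}.

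However, there is a genuine gap: Kallenberg's theorem requires the void-probability condition $\nu^\omega(N_n^\omega(R)=0)\to \bP(N_{(\alpha)}(R)=0)$ for all $R$ in the class $\cR$ of \emph{finite unions} of rectangles $(s_i,t_i]\times J_i$, not merely for single rectangles. (Avoidance probabilities of a simple point process on single rectangles do not determine its law; the class must be closed under finite unions.) While the intensity condition extends to unions by linearity, the void probability of $\bigcup_i (s_i,t_i]\times J_i$ over several disjoint time windows requires showing that the corresponding avoidance events asymptotically \emph{factorize}, i.e.
\[
\nu^{\eta}\Bigl(\{R_{A_n}(\eta)>\lfloor nt_1\rfloor\}\cap T_{\eta}^{-\lfloor ns_2\rfloor}\bigl(N_n^{\sigma^{\lfloor ns_2\rfloor}\eta}(R_2')=0\bigr)\Bigr)
-\nu^{\eta}\bigl(R_{A_n}(\eta)>\lfloor nt_1\rfloor\bigr)\,\nu^{\eta}\bigl(N_n^{\eta}(R_2)=0\bigr)\to 0
\]
uniformly in $\eta$. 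This is the heart of the paper's proof (an induction on the number of time intervals), and it is exactly where the spectral gap enters: one applies {\bf (Dec)} to $P_\eta^{\lfloor ns_2\rfloor}\bigl((\one_U-\nu^\eta(U))h_\eta\bigr)$ with $U=\{R_{A_n}(\eta)>\lfloor nt_1\rfloor\}$, and this requires the uniform BV bound of Lemma~\ref{th.bound-Pn} on $P_\eta^{\lfloor nt_1\rfloor}\bigl(h_\eta\prod_{j}\one_{A_n^c}\circ T_\eta^j\bigr)$, which is a nontrivial iterated Lasota--Yorke argument. This step is precisely what fails for the intermittent maps of Example~\ref{intermittent}, which is why the paper only obtains the marginal convergence $N_n^\omega((0,1]\times\cdot)$ there; your proposal as written would not distinguish the two cases, which is a sign the essential difficulty has been bypassed.
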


\begin{thm}\label{intermittent_poisson}

 In the setting of intermittent maps for $m$-a.e.~$x_0$
  for $\bP$-a.e.~$\omega$,
  \[
    N^{\omega}_n ((0,1]\times \cdot) \dto  N_{(\alpha)} ((0,1] \times
    \cdot)
  \]
  \end{thm}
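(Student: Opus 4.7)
\medskip
\noindent\emph{Proof sketch for Theorem~\ref{intermittent_poisson}.}
The plan is to invoke Kallenberg's theorem~\cite[Theorem 4.7]{Kal75} for the simple point process $N_n^\omega((0,1]\times\cdot)$ on $\bR\setminus\{0\}$, using $\cJ$ as the generating semi-ring. Thus it suffices to verify, for every $J\in\cJ$ and $\bP$-a.e.~$\omega$, the two conditions
\[
\lim_{n\to\infty}\nu^\omega\bigl(N_n^\omega((0,1]\times J)=0\bigr)=e^{-\Pi_\alpha(J)},\qquad \lim_{n\to\infty}\bE_{\nu^\omega}\bigl[N_n^\omega((0,1]\times J)\bigr]=\Pi_\alpha(J).
\]
Since $\cJ$ is countably generated up to boundary-zero approximation, picking a countable dense family and taking a common full-measure set in $\omega$ is routine.

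For the avoidance probability, set $A_n=\phistar^{-1}(b_nJ)$; by the definition of $N_n^\omega$, the event $\{N_n^\omega((0,1]\times J)=0\}$ equals $\{R_{A_n}(\omega)>n\}$. Applying Theorem~\ref{thm:return_intermittent} with $s=0$, $t=1$ (valid for $m$-a.e.~$x_0$) gives the required limit $e^{-\Pi_\alpha(J)}$ for $\bP$-a.e.~$\omega$.

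For the intensity, write
\[
\bE_{\nu^\omega}[N_n^\omega((0,1]\times J)]=\sum_{j=0}^{n-1}\nu^\omega\bigl(\phistar\circ T_\omega^{j}\in b_nJ\bigr)=\sum_{j=0}^{n-1}\nu^{\sigma^j\omega}(A_n),
\]
using the equivariance $(T_\omega^j)_*\nu^\omega=\nu^{\sigma^j\omega}$. Fix $x_0\in(0,1)$ and let $\delta>0$ with $x_0\in(2\delta,1-2\delta)$. For $n$ large, $A_n\subset[\delta,1]$; on this interval the cone $L$ controls $|h_\omega'|\le C(\gamma_{max}+1)h_\omega(x)/x\le C_\delta$ uniformly in $\omega$, so the densities $h_\omega$ are uniformly Lipschitz on $[\delta,1]$. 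Hence
\[
\nu^{\sigma^j\omega}(A_n)= h_{\sigma^j\omega}(x_0)\,m(A_n)+O\!\bigl(m(A_n)\cdot\text{diam}(A_n)\bigr),
\]
with an $O$-constant independent of $j$ and $\omega$. The error summed over $j=0,\dots,n-1$ is $O(nm(A_n)\cdot\text{diam}(A_n))=o(1)$. For the main term, $\omega\mapsto h_\omega(x_0)$ is measurable and bounded (again by the cone), so Birkhoff's ergodic theorem for the $\bP$-preserving shift $\sigma$ yields
\[
\frac{1}{n}\sum_{j=0}^{n-1}h_{\sigma^j\omega}(x_0)\longrightarrow \int_\Omega h_\omega(x_0)\,d\bP(\omega)=h(x_0)\qquad\text{for $\bP$-a.e.~}\omega.
\]
Combined with the regular-variation identity $nm(A_n)\to\Pi_\alpha(J)/h(x_0)$ (which follows from $b_n=b^{1/\alpha}n^{1/\alpha}$ with $b=2h(x_0)$ and the structure of $\phistar$), we obtain $\bE_{\nu^\omega}[N_n^\omega((0,1]\times J)]\to\Pi_\alpha(J)$, completing the verification.

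The main obstacle is not in this theorem itself but in Theorem~\ref{thm:return_intermittent}, which supplies the avoidance probabilities; once that hitting-time statement is in hand, the only real work here is ensuring the Lipschitz approximation of $h_\omega$ on sets bounded away from the indifferent fixed point is uniform in $\omega$, and marshalling Birkhoff's theorem along a countable dense subfamily of $\cJ$ to obtain a single $\bP$-null exceptional set.
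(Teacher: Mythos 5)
Your proof is correct, and it follows the paper's route for part (a) (Kallenberg's theorem plus the avoidance probability supplied by Theorem~\ref{thm:return_intermittent}, modulo the $O(m(A_n))$ rounding discrepancy between $\{N_n^\omega((0,1]\times J)=0\}$ and $\{R_{A_n}(\omega)>n\}$, which you glossed over but which is harmless). For the intensity condition (b), however, you take a genuinely different and more direct route than the paper. The paper applies Lemma~\ref{lemma:tech_intermittent} with $\chi_n=\one_{A_n}$, which is a second-moment/Borel--Cantelli argument built on the approximation $h_\omega^i = P^i_{\sigma^{-i}\omega}\one$ and the $L^1$ convergence rate~\eqref{intermittent_approx}; it needs $\gamma_{\max}<1/2$ and a polynomial speedup in $n$. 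You instead observe that the cone $L$ forces $h_\omega$ to be Lipschitz on $[\delta,1]$ with constant uniform in $\omega$, freeze the density at $x_0$ to write $\nu^{\sigma^j\omega}(A_n)=h_{\sigma^j\omega}(x_0)m(A_n)+O(m(A_n)\operatorname{diam}A_n)$, sum the $O(n^{-2})$ error to $o(1)$, and then apply Birkhoff's ergodic theorem to $\omega\mapsto h_\omega(x_0)$ combined with $n\,m(A_n)\to\Pi_\alpha(J)/h(x_0)$. This is cleaner and places no additional constraint on $\gamma_{\max}$ for step (b); the paper's Lemma~\ref{lemma:tech_intermittent} is, however, reused elsewhere (e.g.\ in Lemma~\ref{lem:tech} and the stable-law arguments), so for the paper's architecture it is not wasted effort. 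One small expositional caveat: the cone does not assert differentiability of $h_\omega$, so the bound ``$|h_\omega'|\le C(\gamma_{\max}+1)h_\omega(x)/x$'' should be replaced by the equivalent Lipschitz estimate derived directly from monotonicity of $h_\omega$ and of $x^{\gamma_{\max}+1}h_\omega$, which is what you actually use.
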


After some preliminary lemmas and results Theorem~\ref{thm:return} is proved in Section~8.1, Theorem~\ref{thm:return_intermittent} in
Section~8.2, Theorem~\ref{thm:poisson_expanding} in Section~9.1 and Theorem~\ref{intermittent_poisson} in Section~9.2.

\section{Scheme of proofs}

\subsection{Two useful lemmas}

We now proceed to the proofs of the main results. We will use the following
technical propositions which are a form of spatial ergodic theorem which
allows us to prove exponential and Poisson limit laws.

\begin{lem}\label{lemma:tech} 
  Assume Condition U and let $\chi_n : Y \to \bR$ be a sequence
  of functions in $L^1(m)$ such that
  $\bE_m(| \chi_n |) = \cO(n^{-1} \widetilde{L}(n))$ for some slowly
  varying function $\widetilde{L}$. Then, for $\bP$-a.e.
  $\omega \in \Omega$ and for all $\ell \ge 1$,
  \[
    \lim_{n \to \infty} \sup_{0 \le k \le \ell} \left| \sum_{j=0}^{kn - 1}
      \left(\bE_{\nu^{\sigma^j \omega}}( \chi_n) - \bE_\nu(\chi_n)
      \right)\right| = 0.
  \]

  Therefore, given $( s, t] \subset [0, \infty)$ and $\epsilon > 0$, for
  $\bP$-a.e.~$\omega$ there exists $N(\omega)$ such that
  \begin{equation}\nonumber
    \left|\sum_{r=\lfloor n s \rfloor +1}^{\lfloor n t \rfloor} \left(\bE_{\nu^{\sigma^j \omega}}( \chi_n) -
        \bE_\nu(\chi_n)\right)\right|\le \epsilon
  \end{equation}
  for all $n \ge N(\omega)$.
\end{lem}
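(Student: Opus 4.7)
The idea is to reduce to a zero-mean observable on the Bernoulli base and exploit exponential decay of correlations. Set
\[
f_n(\omega) := \bE_{\nu^\omega}(\chi_n) - \bE_\nu(\chi_n) = \int \chi_n (h_\omega - h) \, dm,
\]
so that $\bE_\bP(f_n) = 0$ and the inner sum in the lemma equals $S_N(\omega) := \sum_{j=0}^{N-1} f_n(\sigma^j \omega)$ at $N = kn$.

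\emph{Step 1 (bounds on $f_n$).} Under Condition U, $\|h_\omega\|_{L^\infty(m)}$ is uniformly bounded in $\omega$ and $\omega \mapsto h_\omega$ is H\"older continuous with values in $L^\infty(m)$. Consequently
\[
\|f_n\|_\infty \le 2C \|\chi_n\|_{L^1(m)} = O\bigl(n^{-1}\widetilde{L}(n)\bigr),
\]
and for any $\omega,\omega' \in \Omega$,
\[
|f_n(\omega) - f_n(\omega')| \le \|\chi_n\|_{L^1(m)} \cdot \|h_\omega - h_{\omega'}\|_{L^\infty(m)} \le C' \|\chi_n\|_{L^1(m)} \, d_\theta(\omega,\omega')^\gamma
\]
for some $\gamma>0$. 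Hence the H\"older norm $\|f_n\|_{\mathrm{H}\ddot{\mathrm{o}}\mathrm{l}}$ is also $O(n^{-1}\widetilde{L}(n))$.

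\emph{Step 2 (moment bound and Borel--Cantelli).} Since $(\Omega,\sigma,\bP)$ is a Bernoulli shift, H\"older observables satisfy exponential decay of correlations
\[
\bigl|\mathrm{Cov}_\bP(F\circ\sigma^i,G\circ\sigma^j)\bigr| \le C_0 \theta_0^{|i-j|} \|F\|_{\mathrm{H}\ddot{\mathrm{o}}\mathrm{l}} \|G\|_{\mathrm{H}\ddot{\mathrm{o}}\mathrm{l}},
\]
from which a standard Rosenthal-type fourth-moment estimate yields
\[
\bE_\bP(S_N^4) \le C\, N^2 \, \|f_n\|_{\mathrm{H}\ddot{\mathrm{o}}\mathrm{l}}^4
\]
with $C$ independent of $n,N$. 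Taking $N = kn$ with $0 \le k \le \ell$ and combining with Step~1 gives $\bE_\bP(S_{kn}^4) \le C_\ell \widetilde{L}(n)^4 / n^2$. By Markov and a union bound over the finite set $k\in\{0,1,\ldots,\ell\}$,
\[
\bP\Bigl( \sup_{0 \le k \le \ell} |S_{kn}| > \epsilon \Bigr) \le \frac{C_\ell \widetilde{L}(n)^4}{n^2 \epsilon^4}.
\]
Because $\widetilde{L}$ is slowly varying we have $\widetilde{L}(n)^4 = o(n^{1/2})$, so the right side is summable in $n$; Borel--Cantelli then gives the first claim.

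\emph{Step 3 (the ``therefore'' part and main obstacle).} The fourth-moment bound holds for every $N$, not only multiples of $n$, so combining it with a stationary maximal inequality produces $\sup_{0 \le N \le \ell n}|S_N(\omega)| \to 0$ $\bP$-a.s. Choosing $\ell = \lceil t \rceil + 1$ and writing the required partial sum as $S_{\lfloor nt \rfloor} - S_{\lfloor ns \rfloor}$ yields the stated bound for all $n \ge N(\omega)$. The main technical obstacle is verifying the Rosenthal-type bound with constants \emph{uniform in $n$}: one must carefully split the sum $\bE_\bP(S_N^4) = \sum_{i_1,i_2,i_3,i_4} \bE_\bP(f_n\circ\sigma^{i_1}\cdots f_n\circ\sigma^{i_4})$ according to the spacings of the indices and bound each block using the mean-zero property, the exponential decay above, and $\|f_n\|_{\mathrm{H}\ddot{\mathrm{o}}\mathrm{l}} = O(n^{-1}\widetilde{L}(n))$; alternatively one could invoke a Bernstein inequality for exponentially mixing sequences, using the $L^\infty$-bound on $f_n$.
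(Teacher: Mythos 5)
Your argument is correct in outline and reaches the same conclusion, but it diverges from the paper's proof at the crucial quantitative step. The paper bounds $\bP\bigl(\sup_{0\le k\le\ell}|S_{kn}|\ge\delta\bigr)$ by a \emph{second}-moment Chebyshev estimate using only pairwise decay of correlations for H\"older observables on the Bernoulli base; this yields a bound of order $\widetilde{L}(n)^2/n$, which is not summable, so the paper must run Borel--Cantelli along the subsequence $n=\lfloor p^{1+\eta}\rfloor$ and then interpolate between consecutive subsequence times using the bound $\|f_n\|_\infty=\cO(n^{-1}\widetilde{L}(n))$ together with Potter's bounds for the slowly varying function. Your fourth-moment route, giving $\cO(\widetilde{L}(n)^4/n^2)$, buys summability along the full sequence and so dispenses with the subsequence and interpolation machinery entirely --- a genuine simplification of the bookkeeping. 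The price is that the Rosenthal-type bound $\bE_\bP(S_N^4)\le CN^2\|f_n\|_{\theta}^4$ does \emph{not} follow from the pairwise covariance decay you quote: pairwise decorrelation is insufficient for fourth moments, and you need a multiple-decorrelation estimate for quadruple products, e.g. by approximating the H\"older functions $f_n\circ\sigma^{i_j}$ by cylinder functions depending on finitely many coordinates and exploiting independence of disjoint coordinate blocks, with the error controlled by $\theta^{g/2}\|f_n\|_\theta\|f_n\|_\infty^3$ where $g$ is the largest spacing. You correctly identify this as the main obstacle and sketch the right strategy (split by spacings, use the mean-zero property and the uniform-in-$n$ norm bounds), and for an i.i.d.\ base this is standard and does go through, so I consider the proof essentially complete modulo writing out that computation. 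Your Step 3 handling of the second claim via a maximal inequality over all $N\le\ell n$ is in fact more careful than the paper's one-line ``take differences of two values of $\ell$'', since $\lfloor ns\rfloor$ and $\lfloor nt\rfloor$ need not be multiples of $n$.
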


\begin{proof}

  We obtain the second claim by taking the difference between two values of
  $\ell$ in the first claim.

Fix $\ell \ge 1$. For $\delta >0$, let 
\[
U_k^n(\delta) = \left\{  \omega \in \Omega \, : \, \left| \sum_{j=0}^{kn  -1} \left(\bE_{\nu^{\sigma^j \omega}}(\chi_n) - \bE_\nu(\chi_n) \right) \right| \ge \delta \right\},
\]
and
\[
B^n(\delta) = \left\{  \omega \in \Omega \, : \, \sup_{0 \le k \le \ell} \left| \sum_{j=0}^{kn  -1} \left(\bE_{\nu^{\sigma^j \omega}}(\chi_n) - \bE_\nu(\chi_n) \right) \right| \ge \delta \right\}.
\]
Note that 
\[
B^n(\delta) = \bigcup_{k=0}^\ell U_k^n(\delta).
\]
We define $f_n(\omega) = \bE_{\nu^\omega}(\chi_n)$ and
$\overline{f}_n = \bE_{\bP}(f_n)$. We claim that $f_n:\Omega\to\R$ is
H\"older with norm $\|f_n \|_\theta = \cO(n^{-1}\widetilde{L}(n))$. Indeed,
for $\omega \in \Omega$, we have
\[
|f_n(\omega)| = \left| \int_Y \chi_n(x) d \nu^\omega(x) \right| \le \|h_\omega\|_{L^\infty_m} \|\chi_n\|_{L^1_m} \le \frac{C}{n}\widetilde{L}(n),
\]
and for $\omega, \omega' \in \Omega$, we have
\begin{align*}
|f_n(\omega)  - f_n(\omega')| &=  \left|\int_Y \chi_n(x) d \nu^\omega(x) - \int_Y \chi_n(x) d\nu^{\omega'}(x) \right| \\
& \le \int_Y | \chi_n(x) | \cdot  |h_\omega(x) - h_{\omega'}(x)| dm(x) \\
& \le \| h_\omega - h_{\omega'}\|_{L^\infty_m} \| \chi_n \|_{L^1_m} \\
& \le \frac C n \widetilde{L}(n) d_\theta(\omega, \omega'),
\end{align*}
since $\omega \in \Omega \mapsto h_\omega \in L^\infty(m)$ is H\"older continuous. In particular, we also have that $\overline{f}_n = \cO(n^{-1}\widetilde{L}(n))$.

We have, using Chebyshev's inequality,
\begin{align*}
\bP(U_k^n(\delta)) &= \bP\left( \left\{ \omega \in \Omega \, : \, \left|\sum_{j=0}^{kn -1} \left( f_n \circ \sigma^j - \overline{f}_n \right) \right| \ge \delta \right\}\right) \\
& \le \frac{1}{\delta^2} \bE_{\bP}\left(\left(\sum_{j=0}^{kn -1} \left( f_n \circ \sigma^j - \overline{f}_n \right)\right)^2 \right) \\
& \le \frac{1}{\delta^2} \left[ \sum_{j=0}^{kn - 1} (\bE_{\bP} |f_n \circ \sigma^j -  \overline{f}_n|^2 + 2 \sum_{0 \le i < j \le kn - 1} \bE_{\bP}((f_n \circ \sigma^i - \overline{f}_n)(f_n \circ \sigma^j - \overline{f}_n)) \right].
\end{align*}
By the $\sigma$-invariance of $\bP$, we have
\[
\bE_{\bP} |f_n \circ \sigma^j -  \overline{f}_n|^2 = \bE_{\bP} |f_n  -  \overline{f}_n|^2,
\]
and, since $(\Omega, \bP, \sigma)$ admits exponential decay of correlations for H\"older observables, there exist $\lambda \in (0,1)$ and $C >0$ such that
\begin{align*}
\bE_{\bP}((f_n \circ \sigma^i - \overline{f}_n)(f_n \circ \sigma^j - \overline{f}_n)) &= \bE_{\bP}((f_n - \overline{f}_n)(f_n \circ \sigma^{j-i}- \overline{f}_n)) \\
& \le C \lambda^{j-i} \| f_n - \overline{f}_n\|_\theta^2.
\end{align*}
We then obtain that 
\begin{align*}
\bP(U_k^n(\delta)) &\le \frac{C}{\delta^2} \left[kn \|f_n - \overline{f}_n\|_{L^2_m}^2 + 2 \sum_{0 \le i < j \le kn - 1} \lambda^{j-i}  \| f_n - \overline{f}_n\|_\theta^2 \right]\\
& \le C \frac{nk}{\delta^2} \|f_n\|_\theta^2 \\
&\le C \frac{k}{n \delta^2}(\widetilde{L}(n))^2,
\end{align*}
which implies that
\[
\bP(B^n(\delta)) \le C \frac{\ell^2}{n \delta^2} (\widetilde{L}(n))^2.
\]

Let $\eta >0$. By the Borel-Cantelli lemma, it follows that for $\bP$-a.e.
$\omega \in \Omega$, there exists $N(\omega, \delta) \ge 1$ such that
$\omega \notin B^{\lfloor p^{1+\eta}\rfloor} (\delta)$ for all
$p \ge N(\omega, \delta)$.

Let now
$P:= \lfloor p^{1+\eta} \rfloor < n \le P' = \lfloor (p+1)^{1+\eta}
\rfloor$ for $p$ large enough. Let $0 \le k \le \ell$. Then, since
$\|f_n\|_\infty = \cO(n^{-1}\widetilde{L}(n))$,
\begin{align*}
\left| \sum_{j=0}^{kP -1 } \left(f_n(\sigma^j \omega) - \overline{f}_n \right) - \sum_{j=0}^{kn - 1}\left(f_n(\sigma^j \omega) - \overline{f}_n \right)\right|  &\le \sum_{j=kP}^{kn - 1} \left|f_n(\sigma^j \omega) - \overline{f}_n \right| \\
& \le C \frac{P' - P}{P} \widetilde{L}(n) \le C  \frac{\widetilde{L}(p^{1 + \eta})}{p},
\end{align*}
because on the one hand
\[
\frac{P' - P}{P} = \frac{\lfloor (p+1)^{1+\eta} \rfloor- \lfloor p^{1+\eta}\rfloor  }{\lfloor p^{1+\eta} \rfloor} = \cO \left( \frac 1 p \right),
\]
and on the other hand, by Potter's bounds, for $\tau > 0$,
\[
\widetilde{L}(n) \le C \widetilde{L}(P) \left( \frac{n}{P} \right)^\tau \le C \widetilde{L}(P) \left( \frac{P'}{P} \right)^\tau \le C\widetilde{L}(P). 
\]

Since 
\[
\left| \sum_{j=0}^{kP  -1} \left(f_n(\sigma^j \omega) - \overline{f}_n \right) \right| < \delta
\]
for all $0 \le k \le \ell$, it follows that for $\bP$-a.e.~$\omega$, there exists $N(\omega, \delta)$ such that $\omega \notin B^n(2 \delta)$ for all $n \ge N(\omega, \delta)$, which concludes the proof.
\end{proof}

We now consider a corresponding result to Lemma~\ref{lemma:tech} in
the setting of intermittent maps.

\begin{lem}\label{lemma:tech_intermittent}
  Assume that
  $\gamma_{max} < 1/2$, and that $\chi_n \in L^1(m)$ is such that
  $\bE_m( |\chi_n|) = \cO(n^{-1})$, $\|\chi_n\|_\infty = \cO(1)$ and there
  is $\delta >0$ such that ${\rm supp}(\chi_n) \subset [\delta,1]$ for all
  $n$.

Then, for $\bP$-a.e.~$\omega \in \Omega$ and for all $\ell \ge 1$, 
\[
  \lim_{n \to \infty} \sup_{0 \le k \le \ell} \left| \sum_{j=0}^{kn - 1}
    \left(\bE_{\nu^{\sigma^j \omega}}( \chi_n) - \bE_\nu(\chi_n)
    \right)\right| = 0.
\]

\end{lem}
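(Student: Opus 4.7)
The plan is to follow the scheme of the proof of Lemma~\ref{lemma:tech} almost verbatim, with the key difficulty that the map $\omega \mapsto h_\omega$ is not Hölder in the intermittent setting, so the exponential decay of correlations for $(\Omega,\bP,\sigma)$ acting on Hölder test functions is not directly available. Instead, I will replace $h_\omega$ by the finite-memory density $h_N(\omega) := P_{\omega_{-1}}\cdots P_{\omega_{-N}}\one$, which depends only on the coordinates $\omega_{-1},\dots,\omega_{-N}$, and exploit the product structure of the Bernoulli measure $\bP$ to obtain \emph{exact} independence beyond lag $N$, rather than only exponential decay.

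Set $f_n(\omega) := \bE_{\nu^\omega}(\chi_n)$ and $g_{n,N}(\omega) := \int \chi_n(x)\,h_N(\omega)(x)\,dm(x)$. The approximation bound~\eqref{intermittent_approx}, combined with $\|\chi_n\|_\infty = O(1)$, yields
\[
\|f_n - g_{n,N}\|_\infty \le \|\chi_n\|_\infty\,\|h_\omega - h_N(\omega)\|_{L^1(m)} \le C\, N^{1-1/\gamma_{max}}(\log N)^{1/\gamma_{max}}.
\]
On the other hand, both $h_\omega$ and $h_N(\omega)$ belong to the cone $L$ and have $m$-integral equal to $1$, so the support condition $\mathrm{supp}(\chi_n)\subset[\delta,1]$ together with~\eqref{eq:intermittent-densities-comparable} gives the uniform bound
\[
\|f_n\|_\infty,\ \|g_{n,N}\|_\infty \le C_\delta \|\chi_n\|_{L^1(m)} = O(n^{-1}).
\]

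I then bound $\bP(B^n(\delta))$ via Chebyshev exactly as in the proof of Lemma~\ref{lemma:tech}, writing $f_n = g_{n,N} + r_{n,N}$ with $\|r_{n,N}\|_\infty \le C N^{1-1/\gamma_{max}}(\log N)^{1/\gamma_{max}}$. The $r_{n,N}$ contribution to the second moment is crudely at most $(kn)^2 \|r_{n,N}\|_\infty^2 = O\bigl((kn)^2 N^{2(1-1/\gamma_{max})}(\log N)^{2/\gamma_{max}}\bigr)$. For the $g_{n,N}$ contribution, note that $g_{n,N}\circ\sigma^j$ depends only on $\omega_{j-1},\dots,\omega_{j-N}$, so under the product measure $\bP$ the variables $g_{n,N}\circ\sigma^i$ and $g_{n,N}\circ\sigma^j$ are genuinely independent as soon as $|j-i|\ge N$. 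Only $O(knN)$ pairs therefore contribute, giving
\[
\bE_\bP\Bigl(\sum_{j=0}^{kn-1}(g_{n,N}\circ\sigma^j - \bar g_{n,N})\Bigr)^2 \le C\,knN\,\|g_{n,N}\|_\infty^2 = O\!\left(\frac{kN}{n}\right).
\]
Choosing $N = n^a$ with $\gamma_{max}/(1-\gamma_{max}) < a < 1$ — a window which is non-empty precisely because $\gamma_{max} < 1/2$ — both contributions become $O(n^{-\beta})$ for some $\beta>0$. A Borel-Cantelli argument along the polynomial sub-sequence $P_p = \lfloor p^{1+\eta}\rfloor$, followed by the interpolation between consecutive sub-sequence indices using the uniform bound $\|f_n\|_\infty = O(n^{-1})$, then concludes as in Lemma~\ref{lemma:tech}.

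The main obstacle is the absence of Hölder dependence of $h_\omega$ on $\omega$; the finite-memory truncation together with the i.i.d.\ structure of $\bP$ compensates for this loss of regularity, and the hypothesis $\gamma_{max} < 1/2$ emerges as the precise threshold at which the polynomial approximation error~\eqref{intermittent_approx} can be made negligible compared to the standard-deviation-scale fluctuations of the truncated partial sum.
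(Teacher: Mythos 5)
Your proposal is correct and follows essentially the same route as the paper's proof: the finite-memory density $h_N(\omega)=P_{\omega_{-1}}\cdots P_{\omega_{-N}}\one$ is exactly the paper's approximant $h^i_\omega=P^i_{\sigma^{-i}\omega}\one$ with $i=N=n^a$, the exact independence beyond lag $N$ under the Bernoulli measure, the window $\gamma_{max}/(1-\gamma_{max})<a<1$ (nonempty iff $\gamma_{max}<1/2$), and the Borel--Cantelli/interpolation step along $n=p^{1+\eta}$ all match. The only cosmetic difference is that you absorb the approximation error $r_{n,N}$ into the second moment inside Chebyshev, whereas the paper first replaces the event for $f_n$ by the corresponding event for $f_n^i$ at level $\epsilon/2$; both lead to the same quantitative requirements.
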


\begin{proof}
  In the setting of intermittent maps we must modify the argument
  of Lemma~\ref{lemma:tech} slightly as $h_\omega$ is not a H\"older
  function of $\omega$. Instead, we consider
  $h^i_{\omega}=P^i_{\sigma^{-i}\omega}\mathds{1}$.
and use that, by \eqref{intermittent_approx},
\begin{equation}\label{eq:intermittent-density-convergence}
  \|h^i_{\omega}-h_{\omega}\|_{L^{1}(m)} \le C
  i^{1-\frac{1}{\gamma_{max}}} 
  \text { \qquad (leaving out the log term).}
\end{equation}
Note that $h^i_{\omega}$ is the $i$-th approximate to
$h_{\omega}$ in the pullback construction of $h_{\omega}$. Let $\nu_\omega^i$ be the measure such that $\frac{d \nu_\omega^i}{dm} = h_\omega^i$.


Consider
\[
f_n^i(\omega) = \bE_{\nu_\omega^i}(\chi_n)\:,\qquad f_n(\omega) = \bE_{\nu^\omega}(\chi_n)
\]
\[
\overline{f}_n^i = \bE_\bP(f_n^i)\:,\qquad \overline{f}_n  = \bE_\bP(f_n).
\]
By~\eqref{eq:intermittent-densities-comparable}, on the set $[\delta, 1]$
the densities involved ($h_\omega^k, h_\omega, h=d \nu/d m$) are uniformly
bounded above and away from zero. Thus
$\|f_n^i \|_\infty = \cO(n^{-1})$.

Pick $0<a<1$ is such that $\beta:=(\frac{1}{\gamma_{max}}-1)a -1 >0$.

For a given $n$ take $i=i_n=n^a$.
By~\eqref{eq:intermittent-density-convergence}, for all $\omega$, $n$ and
$i=n^a$
\[
  |f_{n}^i (\omega) - f_{n} (\omega)| \le \|h_\omega^i -h_\omega\|_{L^1(m)} \| \chi_n \|_{L^\infty(m)} = \cO(n^{-(\beta+1)}).
\]

Then
\[
  |\overline{f}_n^i -\overline{f}_n| = \cO(n^{-(\beta+1)})
\]
and 
\[
  \left|\sum_{r=0}^{kn - 1} [f_{n}^i (\sigma^r \omega)-f_n (\sigma^r
    \omega)] \right|\le C \ell n^{-\beta}.
\]

Given $\epsilon$, choose $n$ large enough that for all $0 \le k \le \ell$,
\[
  \left\{ \omega \in \Omega \, : \, \left| \sum_{r=0}^{kn -1} (f_n
  (\sigma^r \omega)- \overline{f}_n) \right| >\epsilon\right\}
  \subset \left\{ \omega \in \Omega \, : \, \left| \sum_{r=0}^{kn-1}
  (f^i_n (\sigma^r \omega)-\overline{f}^i_n) \right| >\frac{\epsilon}{2}\right\}.
\]
By Chebyshev
\begin{align*}
  \bP \left(\ \left| \sum_{r=0}^{kn-1}
  (f^i_n \circ \sigma^r-\overline{f}^i_n)\right| >\frac{\epsilon}{2}\right) 
& \le 
 \frac{4}{\epsilon^2} \sum_{r=0}^{kn-1}
  \bE_{\bP} \left( \left[f_n^i \circ \sigma^r -\overline{f}^i_n\right]^2  \right)\\
 & +\frac{4}{\epsilon^2} \left[2 \sum_{r=0}^{kn-1}
  \sum_{u=r+1}^{kn-1} \left| \bE_{\bP}[(f^i_n \circ \sigma^r -\overline{f}^i_n)(f_n^i
    \circ\sigma^u-\overline{f}^i_n)]\right|\right]
\end{align*}
We bound 
\[
  \sum_{r=0}^{kn-1}
  \bE_{\bP} \left( \left[f_n^i -\overline{f}^i_n\right]^2  \right) \le C \sum_{r=0}^{kn-1} \|f_n^i\|_\infty^2 \le \frac{C \ell}{ n}
  \]
  and note that if $|r-u|>n^{a}$ then by independence
  \[
    \bE_{\bP} \left[(f^i_n \circ\sigma^r -\overline{f}^i_n)(f_n \circ\sigma^u -\overline{f}^i_n)\right]=
    \bE_{\bP}\left[f^i_n \circ\sigma^r - \overline{f}^i_n\right] \bE_{\bP}\left[f^i_n \circ\sigma^u
 - \overline{f}^i_n\right]=0
    \]
    and hence we may bound
\[
  \sum_{r=0}^{kn-1}
  \sum_{u=r+1}^{kn-1} \left| \bE_{\bP}[(f^i_n \circ \sigma^r -\overline{f}^i_n)(f_n^i
    \circ\sigma^u-\overline{f}^i_n)]\right| \le \frac{C \ell}{n^{1-a}}.
\]
Thus, for $n$ large enough,
\[
  \bP \left( \left\{ \omega \in \Omega \, : \, \left| \sum_{r=0}^{kn-1} [f_n (\sigma^r
  \omega)-\overline{f}_n]\right| >\epsilon \right\}\right) \le \frac{C\ell}{n^{1-a}\epsilon^2} .
\]
The rest of the argument proceeds as in the case of Lemma~\ref{lemma:tech}
using a speedup along a sequence $n=p^{1+\eta}$ where
$\eta>\frac{a}{1-a}$, since $\|f_n\|_\infty = \cO(n^{-1})$ still holds.
\end{proof}

\subsection{Criteria for stable laws and functional limit laws}

The next theorem shows that for regularly varying observables, Poisson
convergence and Condition $U$ imply convergence in the $J_1$ topology if
$\alpha \in (0,1)$ and gives an additional condition to be verified in the
case $\alpha \in [1,2)$.

Note that \eqref{eqn:ergodicity_iid} is essentially
condition~\eqref{eqn:ergodicity} of Proposition~\ref{prop:alpha_ge_1}.

\begin{thm}\label{thm:iid_main}
  Assume $\phi$ is regularly varying,  Condition $U$ holds and that
 \[
   N_n^\omega \dto N_{(\alpha)}
 \]
 for $\bP$-a.e.~$\omega \in \Omega$. 
  
  If $\alpha \in [1, 2)$, assume furthermore that for all $\delta > 0$, and $\bP$-a.e.~$\omega\in \Omega$
  \begin{equation}\label{eqn:ergodicity_iid}
    \lim_{\epsilon \to 0} \limsup_{n \to \infty} \nu \left( \max_{1 \le k
        \le n} \left| \frac{1}{b_n} \sum_{j=0}^{k-1} \left[ \phi \circ
          T_\omega^{j} \one_{\left\{ | \phi \circ T_\omega^j| \le
              \epsilon b_n \right\}} - \bE_{\nu^{\sigma^j \omega}}( \phi
          \one_{\left\{ | \phi | \le \epsilon b_n\right\}})\right]
      \right| \ge \delta \right) =0. 
  \end{equation}

  Then $X_n^\omega \dto X_{(\alpha)}$ in $\bD[0, \infty)$ under the
  probability measure $\nu^\omega$ for $\bP$-a.e.~$\omega \in \Omega$.
\end{thm}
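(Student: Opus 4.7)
The plan is to deduce Theorem~\ref{thm:iid_main} from Propositions~\ref{prop:alpha<1} and~\ref{prop:alpha_ge_1}, applied to the sequential dynamics $T_n=T_{\sigma^{n-1}\omega}$ and the probability $\mu=\nu^\omega$. The Poisson convergence $N_n^\omega\dto N_{(\alpha)}$ is assumed directly; when $\alpha\in[1,2)$ the fluctuation hypothesis~\eqref{eqn:ergodicity} is precisely the assumed~\eqref{eqn:ergodicity_iid}. What remains is to verify the centering/truncation hypothesis: \eqref{eqn:alpha<1} in the case $\alpha\in(0,1)$, and \eqref{eqn:alpha_ge_1} in the case $\alpha\in[1,2)$.

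For $\alpha\in(0,1)$ the verification is immediate. By Condition~U the sample densities $h_{\sigma^j\omega}$ are uniformly bounded, hence
\[
\frac{1}{b_n}\sum_{j=0}^{n\ell-1}\bE_{\nu^{\sigma^j\omega}}\bigl(|\phi|\one_{\{|\phi|\le\epsilon b_n\}}\bigr)\le C\ell\,\frac{n}{b_n}\bE_\nu\bigl(|\phi|\one_{\{|\phi|\le\epsilon b_n\}}\bigr).
\]
Karamata (Proposition~\ref{prop:karamata}(b)) together with $n\nu(|\phi|>\epsilon b_n)\to\epsilon^{-\alpha}$ shows that the right-hand side converges to $C'\ell\,\epsilon^{1-\alpha}$, which vanishes as $\epsilon\to0$ since $1-\alpha>0$.

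For $\alpha\in[1,2)$ the strategy is to apply Lemma~\ref{lemma:tech} to a suitably rescaled truncated observable. When $\alpha=1$, take $\chi_n=b_n^{-1}\phi\one_{\{|\phi|\le\epsilon b_n\}}$: Condition~U and Karamata~(e) give $\bE_m(|\chi_n|)=\cO(\widetilde L(n)/n)$ with $\widetilde L$ slowly varying, so the lemma supplies the averaging identity at checkpoints $K=kn$, $0\le k\le\ell$, while Karamata~(d) identifies $\tfrac{n}{b_n}\bE_\nu(\phi\one_{\{|\phi|\le\epsilon b_n\}})=c_n-c_\alpha(\epsilon)+o(1)$. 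When $\alpha\in(1,2)$ the truncated observable is not small enough in $L^1(m)$ for the lemma to apply directly, so I would decompose $\phi\one_{\{|\phi|\le\epsilon b_n\}}=\phi-\phi\one_{\{|\phi|>\epsilon b_n\}}$: the tail piece $b_n^{-1}\phi\one_{\{|\phi|>\epsilon b_n\}}$ satisfies $\bE_m(|\cdot|)=\cO(1/n)$ by Karamata~(c), so Lemma~\ref{lemma:tech} handles its averaged sums; the integrable main term $\sum_j\bE_{\nu^{\sigma^j\omega}}(\phi)$ is controlled by a quantitative ergodic theorem (law of the iterated logarithm) applied to the bounded H\"older function $\omega\mapsto\bE_{\nu^\omega}(\phi)$ on the exponentially mixing shift $(\Omega,\bP,\sigma)$, giving an error of order $\cO(\sqrt{n\log\log n})=o(b_n)$ since $\alpha<2$.

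The main obstacle I anticipate is upgrading the sup at the checkpoints $K=kn$ provided by Lemma~\ref{lemma:tech} to the sup over all $t\in[0,\ell]$ required by~\eqref{eqn:alpha_ge_1}: the crude bound on within-block fluctuations is $\cO(\widetilde L(n))$, which does not vanish when $\alpha=1$. This should be resolved by combining Lemma~\ref{lemma:tech} with a maximal inequality for the partial sums of $\bE_{\nu^{\sigma^j\omega}}(\chi_n)-\bE_\nu(\chi_n)$ (treated as an exponentially mixing sequence under $\bP$): as in a classical central limit theorem, the within-block supremum is then of square-root order and is absorbed into the vanishing error. Once~\eqref{eqn:alpha_ge_1} is established, Proposition~\ref{prop:alpha_ge_1} (respectively Proposition~\ref{prop:alpha<1}) delivers the quenched $J_1$ convergence $X_n^\omega\dto X_{(\alpha)}$ for $\bP$-a.e.~$\omega\in\Omega$.
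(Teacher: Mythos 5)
Your plan coincides with the paper's proof: reduce to Proposition~\ref{prop:alpha<1} (for $\alpha\in(0,1)$) via condition~U and Karamata, and to Proposition~\ref{prop:alpha_ge_1} (for $\alpha\in[1,2)$) via the decomposition you describe, namely for $\alpha\in(1,2)$ the untruncated main term $\sum_j\bE_{\nu^{\sigma^j\omega}}(\phi)$ handled by the law of the iterated logarithm applied to the H\"older observable $\omega\mapsto\bE_{\nu^\omega}(\phi)$, the tail piece $\chi_n=b_n^{-1}\phi\one_{\{|\phi|>\epsilon b_n\}}$ handled by Lemma~\ref{lemma:tech} with $\bE_m(|\chi_n|)=\cO(n^{-1})$, and the stationary error handled by Karamata~(c); and for $\alpha=1$ the truncated piece $\chi_n=b_n^{-1}\phi\one_{\{|\phi|\le\epsilon b_n\}}$ via Lemma~\ref{lemma:tech} and Karamata~(d),(e). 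These are precisely the three terms $A_n^\omega$, $B_{n,\epsilon}^\omega$, $C_{n,\epsilon}^\omega$ (resp.\ the two terms for $\alpha=1$) that the paper's proof introduces.

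The concern you raise at the end is a genuine subtlety that the paper's proof passes over silently. Lemma~\ref{lemma:tech} as stated controls $\sup_{0\le k\le\ell}\bigl|\sum_{j=0}^{kn-1}(\bE_{\nu^{\sigma^j\omega}}(\chi_n)-\bE_\nu(\chi_n))\bigr|$ only at the $\ell+1$ integer checkpoints $k$, whereas the term $\sup_{0\le t\le\ell}B_{n,\epsilon}^\omega(t)$ needed for~\eqref{eqn:alpha_ge_1} is a supremum over all $\lfloor nt\rfloor\in\{0,\dots,n\ell\}$; the paper nevertheless invokes the lemma to conclude this sup tends to $0$. The crude within-block bound is $n\cdot\cO(n^{-1}\widetilde L(n))=\cO(\widetilde L(n))$, which does not vanish --- even for $\alpha\in(1,2)$, where $\widetilde L$ is essentially constant, $\cO(1)$ is not small enough. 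Your remedy is correct: the exponential mixing of $(\Omega,\bP,\sigma)$ gives $\bigl\|\sum_{j=0}^{K-1}(f_n\circ\sigma^j-\overline f_n)\bigr\|_{L^2(\bP)}^2\le CK\|f_n\|_\theta^2=\cO(n^{-1}\widetilde L(n)^2)$ uniformly in $K\le n\ell$, and a maximal inequality for the partial sums of this mixing sequence (for instance Moricz/Serfling, or Doob after a martingale--coboundary decomposition) upgrades this to a bound on $\max_{K\le n\ell}$ with the same order, after which the Borel--Cantelli speedup argument in Lemma~\ref{lemma:tech}'s proof goes through verbatim. Equivalently one may keep Chebyshev but refine the checkpoints in the proof of Lemma~\ref{lemma:tech} to spacing $\sqrt n$, so both the per-checkpoint probability and the $\cO(\widetilde L(n)/\sqrt n)$ within-block crude error vanish. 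Either way, the needed strengthening is of Lemma~\ref{lemma:tech}, not of your argument for Theorem~\ref{thm:iid_main}; once you carry out the maximal-inequality step your proof is complete.
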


\begin{rem}
  From \eqref{eqn:density_bounded} and Theorem~\ref{thm:seq_levy_FLT}, it
  follows that the convergence of $X_n^\omega$ also holds under the
  probability measure $\nu$.
\end{rem}


\begin{proof}[Proof of Theorem \ref{thm:iid_main}]

  When $\alpha \in (0,1)$, we check the hypothesis of Proposition
  \ref{prop:alpha<1}. Using \eqref{eqn:density_bounded}, we have
\[
  \left| \frac{1}{b_n} \sum_{j=0}^{n \ell - 1} \bE_{\nu^{\sigma^j \omega}}(
    | \phi| \one_{\left\{ | \phi | \le \epsilon b_n\right\}}) \right|
  \le C \frac{n \ell}{b_n} \bE_\nu(| \phi| \one_{\left\{ |\phi| \le
      \epsilon b_n\right\}})
\]
Using Proposition \ref{prop:karamata}, we see that
condition \eqref{eqn:alpha<1} is satisfied since $\alpha <1$, thus proving
the theorem in this case.

When $\alpha \in [1,2)$, we consider instead
Proposition~\ref{prop:alpha_ge_1}. Firstly, we remark that condition
\eqref{eqn:ergodicity} is implied by \eqref{eqn:ergodicity_iid} and
\eqref{eqn:density_bounded}. It remains to check
condition~\eqref{eqn:alpha_ge_1}, which constitutes the rest of the proof.

\medskip

If $\alpha \in (1,2)$ we have

\begin{equation} \label{eqn:iid_main} \left| \frac{1}{b_n} \left[
    \sum_{j=0}^{\lfloor nt \rfloor - 1} \bE_{\nu^{\sigma^j \omega}}(\phi
    \one_{\left\{| \phi | \le \epsilon b_n \right\}}) - t (c_n -
    b_n c_\alpha(\epsilon)) \right] \right| \le A_n^\omega(t) + B_{n,
    \epsilon}^\omega(t) + C_{n, \epsilon}^\omega(t)
\end{equation}
with
\[
A_n^\omega(t) = \left| \frac{1}{b_n} \left[ \sum_{j=0}^{\lfloor nt \rfloor - 1} \bE_{\nu^{\sigma^j \omega}}(\phi) - t c_n  \right] \right|,
\]
\[
B_{n, \epsilon}^\omega(t) = \left| \frac{1}{b_n} \left[ \sum_{j=0}^{\lfloor nt \rfloor - 1} \mathbb{E}_{\nu^{\sigma^j \omega}}(\phi \one_{\left\{|\phi| > \epsilon b_n\right\}}) - nt \mathbb{E}_\nu(\phi \one_{\left\{| \phi | > \epsilon b_n\right\}})\right] \right|
\]
and
\[
C_{n, \epsilon}^\omega(t) = \left| \frac{nt}{b_n}\mathbb{E}_\nu(\phi \one_{\left\{| \phi | > \epsilon b_n\right\}}) - t c_\alpha(\epsilon) \right|.
 \]

 Since $\phi$ is regularity varying with index $\alpha >1$, it is
 integrable and the function $\omega \mapsto \bE_{\nu^\omega}(\phi)$ is
 H\"{o}lder. Hence, it satisfies the law of the iterated logarithm, and we
 have for $\bP$-a.e.~$\omega \in \Omega$
\[
\left|\frac{1}{k} \sum_{j=0}^{k-1} \bE_{\nu^{\sigma^j \omega}}(\phi) - \bE_\nu(\phi) \right| = \cO\left( \frac{\sqrt{\log \log k}}{\sqrt k}\right).
\]
Thus, we have
\[
\sup_{0 \le t \le \ell} A_n^\omega(t) = \mathcal{O} \left( \frac{\sqrt{n \ell} \sqrt{\log \log (n \ell)}}{b_n} \right).
\]
As a consequence, we can deduce that $\lim_{n \to \infty} \sup_{0 \le t \le \ell} A_n^\omega(t) = 0 $ since $b_n = n^{\frac{1}{\alpha}} \widetilde{L}(n)$ for a slowly varying function $\widetilde{L}$, with $\alpha <2$.

By Proposition \ref{prop:karamata}, we also have
\[
\lim_{n \to \infty} n b_n^{-1} \bE_\nu(\phi \one_{\left\{ |\phi| > \epsilon b_n \right\}}) = c_\alpha(\epsilon).
\]
In particular, we have \[
\lim_{n \to \infty} \sup_{0 \le t \le \ell} C_{n, \epsilon}^\omega(t) = 0. 
\]

This also implies that $\bE_m(| \chi_n| ) = \cO(n^{-1})$ if we define
$\chi_n = b_n^{-1}\phi \one_{\left\{ |\phi| > \epsilon b_n
  \right\}}$. From Lemma \ref{lemma:tech}, it follows that
$\lim_{n \to \infty} \sup_{0 \le t \le \ell} B_{n, \epsilon}^\omega(t) =
0$.

Putting all these estimates together concludes the proof when $\alpha \in (1,2)$.

\medskip

When $\alpha = 1$, we estimate the RHS of \eqref{eqn:iid_main} by
$A_{n, \epsilon}^\omega(t) + B_{n, \epsilon}^\omega(t)$ with
\[
A_{n, \epsilon}^\omega(t) = \left| \frac{1}{b_n} \left[ \sum_{j=0}^{\lfloor nt \rfloor - 1} \mathbb{E}_{\nu^{\sigma^j \omega}}(\phi \one_{\left\{|\phi| \le \epsilon b_n\right\}}) - nt\mathbb{E}_\nu(\phi \one_{\left\{| \phi | \le \epsilon b_n\right\}}) \right] \right|
\]
and
\[
B_{n, \epsilon}^\omega(t) = \left| \frac{nt}{b_n}\mathbb{E}_\nu(\phi \one_{\left\{\epsilon b_n <| \phi | \le b_n\right\}}) - t c_\alpha(\epsilon) \right|.
 \]

We define
$\chi_n = b_n^{-1} \phi \one_{\left\{| \phi | \le \epsilon
    b_n\right\}}$. By Proposition \ref{prop:karamata}, we have
$\bE_m(| \chi_n |) = \cO(n^{-1} \widetilde{L}(n))$ for some slowly varying
function $\widetilde{L}$, and so by Lemma~\ref{lemma:tech},
\[
\lim_{n \to \infty} \sup_{0 \le t \le \ell} A_{n, \epsilon}^\omega(t) = 0.
\]

On the other hand, by Proposition \ref{prop:karamata}, we have
\[
\lim_{n \to \infty} n b_n^{-1} \bE_\nu(\phi \one_{\left\{\epsilon b_n <|\phi| \le \epsilon b_n\right\}}) = c_\alpha(\epsilon)
\] 
and so $\lim_{n \to \infty} \sup_{0 \le t \le \ell} B_{n, \epsilon}^\omega(t) = 0$ which completes the proof.
\end{proof}


\section{An exponential law} \label{sec:exp_law}

We denote by $\cJ$ the family of all finite unions of intervals of the form $(x, y]$, where $- \infty \le x < y \le \infty$ and $0 \notin  [x,y]$. For $J \in \cJ$, we will establish a quenched exponential law for the sequence of sets $A_n = (\phi_{x_0})^{-1}(b_n J)$. Similar results were obtained in \cite{CS20, FFV17, HRY20, RSV14, RT15}.

 Since $\phi$ is regularly varying, it is easy to verify that 
\[
  \lim_{n \to \infty} n \nu(A_n) = \Pi_\alpha(J).
\]
In particular, $ m(A_n) = \cO(n^{-1})$.

\begin{lem}\label{lem:tech}
  Assume Condition $U$ and that $\phi$ is regularly varying with index
  $\alpha$. 

  If $A_n \subset [0,1]$ is a sequence of measurable subsets such that
  $m(A_n) = \cO(n^{-1})$, then for all $0 \le s < t$,
\[
\lim_{n \to \infty} \left(\left[ \sum_{j=\lfloor ns \rfloor +1}^{\lfloor nt \rfloor} \nu^{\sigma^j \omega}(A_n) \right] - n(t-s) \nu(A_n) \right) = 0.
\]
The same result holds in the setting of intermittent maps  if $A_n \subset [\delta, 1]$ for some $\delta >0$ with $m(A_n) = \cO(n^{-1})$.
In particular, if $A_n = \phi_{x_0}^{-1}(b_n J)$ for $J \in \cJ$, then for all $0 \le s < t$.
\[
\lim_{n \to \infty}  \sum_{j=\lfloor ns \rfloor+1}^{\lfloor nt \rfloor} \nu^{\sigma^j \omega}(A_n)  = (t-s) \Pi_\alpha(J).
\]
\end{lem}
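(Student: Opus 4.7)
The plan is to reduce the lemma to the already-proven technical lemmas (Lemma~\ref{lemma:tech} in the uniformly expanding setting and Lemma~\ref{lemma:tech_intermittent} in the intermittent setting) applied to the simple sequence of test functions $\chi_n = \one_{A_n}$. Concretely, since $m(A_n) = \cO(n^{-1})$ we have $\bE_m(|\chi_n|) = m(A_n) = \cO(n^{-1})$, which matches the hypothesis of Lemma~\ref{lemma:tech} with the constant slowly varying function $\widetilde{L} \equiv 1$. In the intermittent setting, the additional required hypotheses $\|\chi_n\|_\infty \le 1 = \cO(1)$ and $\mathrm{supp}(\chi_n) = A_n \subset [\delta,1]$ are part of the standing assumption on $A_n$, so Lemma~\ref{lemma:tech_intermittent} applies directly.

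Invoking the ``therefore'' part of Lemma~\ref{lemma:tech} (or its intermittent counterpart) with the interval $(s,t]$ and an arbitrary $\epsilon>0$ gives, for $\bP$-a.e.~$\omega$,
\[
  \Bigl|\sum_{r = \lfloor ns\rfloor + 1}^{\lfloor nt\rfloor} \bigl(\nu^{\sigma^r\omega}(A_n) - \nu(A_n)\bigr)\Bigr| \le \epsilon
\]
for all $n$ sufficiently large. It remains to show that the ``averaged'' sum $(\lfloor nt \rfloor - \lfloor ns \rfloor)\nu(A_n)$ can be replaced by $n(t-s)\nu(A_n)$ at no cost. Under Condition $U$, $h \in L^\infty(m)$ and hence $\nu(A_n) \le \|h\|_\infty m(A_n) = \cO(n^{-1})$; the same bound holds in the intermittent case on $[\delta,1]$ by~\eqref{eq:intermittent-densities-comparable}. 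Therefore
\[
  \bigl|(\lfloor nt \rfloor - \lfloor ns \rfloor)\nu(A_n) - n(t-s)\nu(A_n)\bigr| \le 2\nu(A_n) = \cO(n^{-1}),
\]
which tends to $0$. Combining the two displays and letting $\epsilon \to 0$ along a suitable sequence yields the first conclusion.

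For the final ``in particular'' statement, observe that regular variation of $\phi_{x_0}$ and Definition~\ref{def:scaling-constants} give $n\nu(A_n) = n\nu(\phi_{x_0} \in b_n J) \to \Pi_\alpha(J)$, so $n(t-s)\nu(A_n) \to (t-s)\Pi_\alpha(J)$, and the conclusion follows immediately by addition. (In the intermittent case one needs $x_0 > 0$ to guarantee $A_n \subset [\delta,1]$ for $n$ large, which is automatic for the $m$-a.e.~choice of $x_0$ used in the theorems that invoke this lemma.)

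There is no genuine obstacle here: the analytic heavy lifting (a Borel--Cantelli argument exploiting the H\"older dependence of $h_\omega$ on $\omega$ and exponential decay of correlations on the base, or its intermittent-map analogue built on~\eqref{eq:intermittent-density-convergence}) was already done in Lemmas~\ref{lemma:tech} and~\ref{lemma:tech_intermittent}. The only minor subtlety is bookkeeping between ``summing up to $kn$'' (the form used in the technical lemmas) and ``summing between $\lfloor ns \rfloor$ and $\lfloor nt \rfloor$'' (the form needed here), but this is exactly what the second ``therefore'' conclusion of Lemma~\ref{lemma:tech} provides by taking a difference of two such sums.
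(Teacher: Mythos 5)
Your proof is correct and follows essentially the same route as the paper: set $\chi_n = \one_{A_n}$, apply Lemma~\ref{lemma:tech} (resp.\ Lemma~\ref{lemma:tech_intermittent}), and use $\lim_n n\nu(A_n) = \Pi_\alpha(J)$ for the ``in particular'' statement. The only difference is that you spell out the small bookkeeping step (replacing $(\lfloor nt\rfloor - \lfloor ns\rfloor)\nu(A_n)$ by $n(t-s)\nu(A_n)$ using $\nu(A_n)=\cO(n^{-1})$) that the paper leaves implicit.
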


\begin{proof} For the first statement, it suffices to apply Lemma~\ref{lemma:tech} or Lemma~\ref{lemma:tech_intermittent} with $\chi_n = \one_{A_n}$. The second statement immediately follows since $\lim_n n \nu(A_n) = \Pi_\alpha(J)$.
\end{proof}

\begin{cor}\label{cor:tech}
  Assume the hypothesis of Lemma~\ref{lem:tech}.

  Let $J \in \cJ$, and set $A_n = \phi^{-1}(b_n J)$. Then for $\bP$-a.e.
  $ \omega \in \Omega$, and all $0 \le s < t$,
\[
\lim_{n \to \infty} \prod_{j=\lfloor ns \rfloor +1}^{\lfloor nt \rfloor} \left( 1 - \nu^{\sigma^j \omega}(A_n)\right) = e^{-(t-s) \Pi_\alpha(J)}.
\]
\end{cor}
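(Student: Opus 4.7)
\smallskip

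\noindent\textbf{Proof proposal.} The plan is to take logarithms and reduce the product to a sum of the form already controlled by Lemma~\ref{lem:tech}. First I would record two consequences of regular variation and Condition~$U$ (or the cone bound~\eqref{eq:intermittent-densities-comparable} in the intermittent case). Since $\phi$ is regularly varying,
\[
n\,\nu(A_n)\longrightarrow \Pi_\alpha(J),
\]
hence $\nu(A_n)=O(1/n)$, and since the stationary density $h$ is bounded away from $0$ and $\infty$ on the support of $A_n$, also $m(A_n)=O(1/n)$. Under Condition~$U$ the fiberwise densities $h_\omega$ are uniformly bounded, and in the intermittent setting the same is true uniformly on $[\delta,1]\supset A_n$; thus there is a constant $C_0$ such that
\[
\sup_{\omega\in\Omega}\;\sup_{j\ge 0}\;\nu^{\sigma^j\omega}(A_n)\;\le\; C_0\,m(A_n)\;\le\;\frac{C_0}{n},
\]
and in particular, for all $n$ large enough, $\nu^{\sigma^j\omega}(A_n)\le 1/2$ for every $j$ appearing in the product.

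Given that uniform smallness, the natural next step is to pass to logarithms. Using $|\log(1-x)+x|\le x^2$ for $|x|\le 1/2$, I would write
\[
\sum_{j=\lfloor ns\rfloor+1}^{\lfloor nt\rfloor}\log\!\bigl(1-\nu^{\sigma^j\omega}(A_n)\bigr)
\;=\;-\sum_{j=\lfloor ns\rfloor+1}^{\lfloor nt\rfloor}\nu^{\sigma^j\omega}(A_n)\;+\;R_n(\omega),
\]
where
\[
|R_n(\omega)|\;\le\;\sum_{j=\lfloor ns\rfloor+1}^{\lfloor nt\rfloor}\bigl(\nu^{\sigma^j\omega}(A_n)\bigr)^2\;\le\; n(t-s)\cdot\frac{C_0^2}{n^2}\;=\;O(1/n)\longrightarrow 0.
\]
The quadratic remainder is thus negligible uniformly in $\omega$.

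Finally I would invoke Lemma~\ref{lem:tech}, which gives, for $\bP$-a.e.~$\omega$,
\[
\sum_{j=\lfloor ns\rfloor+1}^{\lfloor nt\rfloor}\nu^{\sigma^j\omega}(A_n)\;\longrightarrow\;(t-s)\,\Pi_\alpha(J).
\]
Combining the two displays yields
\[
\sum_{j=\lfloor ns\rfloor+1}^{\lfloor nt\rfloor}\log\!\bigl(1-\nu^{\sigma^j\omega}(A_n)\bigr)\;\longrightarrow\;-(t-s)\,\Pi_\alpha(J),
\]
and exponentiating gives the claimed limit. The only mildly delicate step is the \emph{uniform} smallness $\nu^{\sigma^j\omega}(A_n)=O(1/n)$ needed to justify the Taylor expansion of $\log(1-x)$; this is free under Condition~$U$ (via $\|h_\omega\|_\infty\le C$), and in the intermittent setting it is exactly where the hypothesis $A_n\subset[\delta,1]$ together with~\eqref{eq:intermittent-densities-comparable} is used. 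Everything else is bookkeeping and a direct application of Lemma~\ref{lem:tech}.
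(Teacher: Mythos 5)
Your proof is correct and follows essentially the same route as the paper's: take logarithms, use the uniform bound $\nu^{\sigma^j\omega}(A_n)=O(n^{-1})$ to control the quadratic remainder in the Taylor expansion of $\log(1-x)$, and apply Lemma~\ref{lem:tech} to the resulting linear sum. Your write-up merely spells out more explicitly the sources of the uniform $O(n^{-1})$ bound (Condition~$U$, respectively the cone estimate in the intermittent case) that the paper states in one line.
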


\begin{proof}
Since $\nu^{\omega}(A_n)$ is of order at most $n^{-1}$ uniformly in $\omega \in \Omega$, it follows that
\[
\log \left[\prod_{j=\lfloor ns \rfloor +1}^{\lfloor nt \rfloor} \left( 1 - \nu^{\sigma^j \omega}(A_n)\right) \right] = - \left( \sum_{j=\lfloor ns \rfloor +1}^{\lfloor nt \rfloor} \nu^{\sigma^j \omega}(A_n) \right) + \cO(n^{-1}).
\]
By Lemma \ref{lem:tech},
\[
\lim_{n \to \infty}  \sum_{j=\lfloor ns \rfloor}^{\lfloor nt \rfloor - 1} \nu^{\sigma^j \omega}(A_n)  = (t-s) \Pi_\alpha(J),
\]
which yields the conclusion.
\end{proof}

\begin{defin} \label{def:hitting_times} For a measurable subset $U \subset Y = [0,1]$, we define the
  hitting time of $(\omega, x) \in \Omega \times Y$ to $U$ by
\[
R_U(\omega)(x) := \inf \left\{ k \ge 1 \, : \, T_\omega^k(x) \in U \right\}.
\]
and the induced measure by $\nu$ on $U$  by
\[
\nu_{U} (A):=\frac{\nu (A\cap U)}{\nu (U)}
\]
\end{defin}

In order to establish our exponential law, we will first obtain a few estimates, based on the proof of \cite[Theorem 2.1]{HSV99}, to relate $\nu^\omega(R_{A_n}(\omega) > \lfloor nt \rfloor)$ to $\sum_{j=0}^{\lfloor nt \rfloor - 1} \nu^{\sigma^j \omega}(A_n)$ so that we are able to invoke Corollary \ref{cor:tech}.

The next lemma is basically \cite[Lemma 6]{RSV14}.

\begin{lem} \label{lem:return} For every measurable set $U \subset [0,1]$,
  we have the bound
  \begin{align*}\label{eq.return-time-bound}
    \left| \nu^{\omega} (R_U(\omega) > k) - \prod_{j=1}^{k} (1-\nu^{\sigma^j
    \omega}(U)) \right|&
  \le \sum_{j=1}^k \nu^{\sigma^j\omega} (U) \; c_{\sigma^j \omega}(k-j, U)
  \prod_{i=1}^{j-1} (1-\nu^{\sigma^i \omega}(U)) \\
&  \le \sum_{j=1}^k \nu^{\sigma^j\omega} (U) \; c_{\sigma^j \omega}(U)
  \end{align*}
where 
\[
  c_\omega(k,U):= \left|\nu_{U}^{\omega} (R_U(\omega)> k) -\nu^{\omega}
  (R_U(\omega)>k) \right|
\]
and
\[
c_\omega(U) := \sup_{k \ge 0} c_\omega(k,U).
 \]
\end{lem}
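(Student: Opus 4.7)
The plan is to establish a one-step recursion for $a_k(\omega) := \nu^\omega(R_U(\omega)>k)$ via a partition argument on the first iterate, and then iterate it. The two ingredients are the equivariance $(T_\omega)_* \nu^\omega = \nu^{\sigma\omega}$ and the definition of $c_\omega(k,U)$ as the discrepancy between the distribution of the hitting time under $\nu^\omega$ and under the induced measure $\nu_U^\omega$.

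First I would observe that $R_U(\omega)(x) > k$ if and only if $T_\omega x \notin U$ and $R_U(\sigma\omega)(T_\omega x) > k-1$. Pushing $\nu^\omega$ forward by $T_\omega$ and splitting the event $\{R_U(\sigma\omega) > k-1\}$ according to whether the point lies in $U$ gives
\[
a_k(\omega) = \nu^{\sigma\omega}(R_U(\sigma\omega) > k-1) - \nu^{\sigma\omega}(U)\, \nu_U^{\sigma\omega}(R_U(\sigma\omega) > k-1).
\]
Introducing the signed error $e_\omega(m,U) := \nu_U^\omega(R_U(\omega) > m) - \nu^\omega(R_U(\omega) > m)$, which by definition satisfies $|e_\omega(m,U)| = c_\omega(m,U)$, this rewrites as the recursion
\[
a_k(\omega) = \bigl(1 - \nu^{\sigma\omega}(U)\bigr)\, a_{k-1}(\sigma\omega) - \nu^{\sigma\omega}(U)\, e_{\sigma\omega}(k-1,U).
\]

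Iterating this identity $k$ times, starting from $a_0(\sigma^k\omega) = 1$, telescopes into
\[
a_k(\omega) = \prod_{j=1}^{k}\bigl(1 - \nu^{\sigma^j\omega}(U)\bigr) - \sum_{j=1}^{k} \nu^{\sigma^j\omega}(U)\, e_{\sigma^j\omega}(k-j,U) \prod_{i=1}^{j-1}\bigl(1 - \nu^{\sigma^i\omega}(U)\bigr).
\]
Taking absolute values and applying $|e_\omega(m,U)| = c_\omega(m,U) \le c_\omega(U)$, together with the trivial bound $\prod_{i=1}^{j-1}(1 - \nu^{\sigma^i\omega}(U)) \le 1$, yields both inequalities in the statement. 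I do not anticipate any genuine obstacle: the only care needed is the bookkeeping of indices in the telescoping step, and the argument is a direct quenched analogue of the deterministic computation in \cite[Theorem 2.1]{HSV99} / \cite[Lemma 6]{RSV14}.
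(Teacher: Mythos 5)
Your proposal is correct and follows essentially the same route as the paper: the same one-step recursion obtained from equivariance and the splitting of $\{R_U(\sigma\omega)>k-1\}$ over $U$ and $U^c$, followed by the same telescoping and absolute-value bounds. Your explicit introduction of the signed error $e_\omega(m,U)$ with $|e_\omega(m,U)|=c_\omega(m,U)$ is in fact slightly cleaner than the paper's mild notational abuse of $c_\omega(k,U)$ as a signed quantity inside the recursion.
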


\begin{proof}
Note that
$\{R_U(\omega) > k\}=[T_{\omega}^{1}]^{-1} (U^c \cap \{R_U(\sigma \omega) >
k-1\})$ and so, using the equivariance of $\{\nu^\omega\}_{\omega \in \Omega}$,
\[
  \nu^{\omega} (R_U(\omega) > k) =\nu^{\sigma\omega} (U^c \cap
  \{R_U(\sigma\omega)> k-1\} ).
\]
Hence 
\[
\nu^{\omega} (R_U(\omega) > k)=\nu^{\sigma \omega} (R_U(\sigma \omega) > k-1) -\nu^{\sigma \omega} (U \cap \{R_U(\sigma \omega) > k-1\} ).
\]

We note that
\begin{align*}
  \nu^{\omega} (R_U(\omega) > k) &=\nu^{\sigma \omega} (R_U(\sigma \omega) > k-1)
  -\nu^{\sigma \omega} (U) [\nu^{\sigma \omega} (R_U(\sigma \omega) > k-1
  )+ c_{\sigma\omega}(k-1,U)] \\
& = (1 - \nu^{\sigma \omega}(U)) \nu^{\sigma \omega}(R_U(\sigma \omega) > k-1) - \nu^{\sigma \omega}(U) c_{\sigma \omega}(k-1, U).
\end{align*}

Iterating we obtain, using the fact that for $\bP$-a.e.~$\omega$, $\nu^{\omega} (R_U(\omega)\ge1)=1$,
\[
  \nu^{\omega} (R_U(\omega) > k)= \prod_{j=1}^{k} (1-\nu^{\sigma^j
    \omega}(U)) - \sum_{j=1}^k \nu^{\sigma^j\omega} (U) c_{\sigma^j
    \omega}(k-j, U) \prod_{i=1}^{j-1} (1-\nu^{\sigma^i \omega}(U))
\]
which yields the conclusion.
\end{proof}

We will estimate now the coefficients $c_\omega(U)$.

\begin{lem}\label{th.return-time-error-estimate}
  For any measurable subset $U \subset Y$ such that $\one_U \in {\rm BV}$, we have, for all $N$
  \begin{equation}\label{eq.c-U}
    c_\omega(U) \le \nu_U^{\omega} (R_U(\omega) \le N) +
    \nu^{\omega} (R_U(\omega) \le N)+
     \frac{1}{\nu^\omega(U)} \left\|  P_{\omega}^N ([\one_{U} -\nu^{\omega} (U)] h_{\omega})\right\|_{L^1(m)}
%
  \end{equation}
  and
  \begin{equation}\label{eq.nu-R-N}
    \nu_U^{\omega} (R_U(\omega) \le N) \le \frac{1}{\nu^\omega(U)}
    \nu^{\omega} (R_U(\omega) \le N),
    \qquad \nu^{\omega} (R_U(\omega) \le N) \le \sum_{i=1}^N \nu^{\sigma^i
      \omega}(U)
  \end{equation}

\end{lem}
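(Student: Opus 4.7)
The plan is to bound $c_\omega(k,U) = |\nu^\omega(R_U>k) - \nu_U^\omega(R_U>k)|$ separately in the regimes $k\ge N$ and $k<N$, and then take the supremum. In the former regime I push the dynamics forward by $N$ steps via transfer-operator duality; in the latter I estimate trivially.

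For $k\ge N$, the cocycle identity $T_\omega^{N+j} = T_{\sigma^N\omega}^j \circ T_\omega^N$ combined with the definition of $R_U$ gives the factorization
\begin{equation*}
  \one_{\{R_U(\omega) > k\}} = \one_{\{R_U(\omega) > N\}} \cdot \big(\one_{\{R_U(\sigma^N\omega) > k-N\}} \circ T_\omega^N\big).
\end{equation*}
Applying the duality $\int g\cdot(f\circ T_\omega^N)\,dm = \int P_\omega^N(g)\cdot f\,dm$ with $g = \one_{\{R_U>N\}} h_\omega$ (for $\nu^\omega(R_U>k)$) and with $g = \one_U\one_{\{R_U>N\}} h_\omega$ (for $\nu^\omega(U\cap\{R_U>k\})$), the difference becomes
\begin{equation*}
  \nu_U^\omega(R_U>k) - \nu^\omega(R_U>k) = \int \one_{\{R_U(\sigma^N\omega) > k-N\}} \cdot \frac{1}{\nu^\omega(U)}\, P_\omega^N\big[(\one_U - \nu^\omega(U))\,\one_{\{R_U>N\}}\, h_\omega\big]\,dm.
\end{equation*}
Writing $\one_{\{R_U>N\}} = 1 - \one_{\{R_U\le N\}}$ inside $P_\omega^N$ separates this into the ``mean-zero'' piece $\nu^\omega(U)^{-1} P_\omega^N[(\one_U - \nu^\omega(U)) h_\omega]$, whose $L^1(m)$-norm is exactly the third term of \eqref{eq.c-U}, plus a residual. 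Using the $L^1(m)$-contraction of $P_\omega^N$ and the pointwise bound $|\one_U - \nu^\omega(U)| \le \one_U + \nu^\omega(U)$ controls the residual by $\nu_U^\omega(R_U\le N) + \nu^\omega(R_U\le N)$, proving \eqref{eq.c-U} for $k\ge N$.

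For $k<N$, since $R_U \ge 1$ we have $\nu^\omega(R_U>k) = 1 - \nu^\omega(R_U\le k)$ and likewise for $\nu_U^\omega$, so $c_\omega(k,U) \le \nu^\omega(R_U\le k) + \nu_U^\omega(R_U\le k) \le \nu^\omega(R_U\le N) + \nu_U^\omega(R_U\le N)$ by monotonicity. Taking the supremum over $k$ completes the proof of \eqref{eq.c-U}. The inequalities in \eqref{eq.nu-R-N} are elementary: the first is $\nu_U^\omega(R_U\le N) = \nu^\omega(U\cap\{R_U\le N\})/\nu^\omega(U) \le \nu^\omega(R_U\le N)/\nu^\omega(U)$, and the second is the union bound $\{R_U\le N\} \subset \bigcup_{i=1}^N T_\omega^{-i}(U)$ together with the equivariance $(T_\omega^i)_*\nu^\omega = \nu^{\sigma^i\omega}$.

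The only genuinely delicate point is the algebraic rearrangement that isolates $(\one_U - \nu^\omega(U))h_\omega$ inside $P_\omega^N$: this quantity has mean zero with respect to $m$, so under assumption \textbf{(Dec)} (or the polynomial-mixing counterpart used in the intermittent setting) its image under $P_\omega^N$ is small for large $N$, which is precisely the form in which the lemma will be applied; the short-return terms $\nu^\omega(R_U\le N)$ and $\nu_U^\omega(R_U\le N)$ will then be controlled via the non-periodicity of $x_0$ by choosing $N$ growing slowly with $n$.
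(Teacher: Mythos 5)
Your proof is correct and follows essentially the same route as the paper's: both push forward $N$ steps via transfer-operator duality to isolate the mean-zero quantity $(\one_U - \nu^\omega(U))h_\omega$, and both control the residual short-return contribution by $\nu_U^\omega(R_U\le N) + \nu^\omega(R_U\le N)$. The only difference is organizational — you start from the exact product identity for $\one_{\{R_U>k\}}$ and use the $L^1(m)$-contraction of $P_\omega^N$ to peel off the short-return piece, while the paper inserts the intermediate quantity $\nu^\omega\bigl(T_\omega^{-N}\{R_U(\sigma^N\omega)\le j-N\}\bigr)$ and bounds the two discrepancies via the set inclusion $\{R_U\le j\}=\{R_U\le N\}\cup T_\omega^{-N}\{R_U(\sigma^N\omega)\le j-N\}$; both yield the identical bound.
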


\begin{proof}
  The estimates~\eqref{eq.nu-R-N} follow from
  \[
    \{R_U(\omega) \le N\} = \bigcup_{i=1}^N (T^i_\omega)^{-1}(U).
  \]
  and therefore
  \[
    \nu^{\omega} (R_U(\omega) \le N) \le
    \sum_{i=1}^N \nu^{\sigma^i \omega}(U)
  \]

  For~\eqref{eq.c-U}, note that
  \[
    c_\omega(U)=\left|\nu_U^{\omega} (R_U(\omega) \le j)-\nu^{\omega}
    (R_U(\omega) \le j)\right|
  \]

If $j \le N$ then
\[
  c_\omega(U) \le \nu_U^{\omega} (R(\omega) \le N) + \nu^{\omega}
  (R(\omega) \le N)
\]

If $j > N$ we write
\begin{align*}
  \nu_U^{\omega} (R_U(\omega) \le j)&-\nu^{\omega} (R_U(\omega) \le j)
  =~\nu_U^{\omega} (R_U(\omega) \le j) - \nu_{U}^{\omega}(T_{\omega}^{-N}
  (R_U(\sigma^N \omega) \le j-N)) \\
  &+~ \nu_{U}^{\omega}(T_{\omega}^{-N} (R_U(\sigma^N \omega) \le j-N)) -
  \nu^{\omega}(T_{\omega}^{-N} (R_U(\sigma^N \omega) \le j-N)) \\
  &+~\nu^{\omega}(T_{\omega}^{-N} (R_U(\sigma^N \omega) \le
  j-N))-\nu^{\omega} (R_U(\omega) \le j) \\
 &= (a) + (b) + (c).
\end{align*}

To bound $(a)$  and $(c)$ 
note that
\[
  \{R_U(\omega) \le j\} = \{R_U(\omega) \le N\} \cup T_{\omega}^{-N}
  (\{R_U(\sigma^N \omega) \le j-N)\})
\]
so 
\begin{equation}\label{eq.bound1}
  |\nu^{\omega} (R_U(\omega) \le j) - \nu^{\omega}(T_{\omega}^{-N}
  (R_U(\sigma^N \omega) \le j-N))| \le \nu^\omega(R_U(\omega) \le N)
\end{equation}
and similarly for $\nu_U^{\omega}$.

To bound $(b)$ we use the decay of $P_\omega^k$. Setting $V = \left\{ R_U(\sigma^N \omega) \le j - N \right\}$, we have

\begin{align*}
 |\nu_{U}^{\omega}(T_{\omega}^{-N} (V)) -
  \nu^{\omega}(T_{\omega}^{-N} (V))| 
  &=\frac{1}{\nu^\omega(U)}\left| \int_Y \one_{U} \one_{V}
    \circ T_{\omega}^N h_{\omega} dm
    - \nu^\omega(U) \int_Y \one_{V} \circ T_{\omega}^N
    h_{\omega} dm \right| \\
 & = \frac{1}{\nu^\omega(U)} \left| \int_Y \one_{V}
    P_{\omega}^N ([\one_{U} -\nu^{\omega} (U)] h_{\omega}) dm\right| \\
& \le \frac{1}{\nu^\omega(U)} \left\|  P_{\omega}^N ([\one_{U} -\nu^{\omega} (U)] h_{\omega})\right\|_{L^1(m)}.
\end{align*}
%
%
%
%
\end{proof}






%
%

\subsection{Exponential law: proof of Theorem~\ref{thm:return}} \label{ssec:exp_law_unif}

We can now prove the exponential law for $A_n = \phi^{-1}(b_n J)$,
$J \in \cJ$.

\begin{proof}[Proof of Theorem~\ref{thm:return}]

Due to rounding errors when taking the integer parts, we have
\begin{multline*}
\left| \nu^{\sigma^{\lfloor ns \rfloor}\omega} \left(R_{A_n}(\sigma^{\lfloor ns \rfloor}\omega) > \lfloor n(t-s) \rfloor\right) - \nu^{\sigma^{\lfloor ns \rfloor}\omega} \left(R_{A_n}(\sigma^{\lfloor ns \rfloor}\omega) > \lfloor nt \rfloor - \lfloor ns \rfloor \right)\right| \\
\le \nu^{\sigma^{\lfloor nt \rfloor}\omega}(A_n) \le C m(A_n) \to 0,
\end{multline*}
and it is thus enough to prove the convergence of $\nu^{\sigma^{\lfloor ns \rfloor}\omega} \left(R_{A_n}(\sigma^{\lfloor ns \rfloor}\omega) > \lfloor nt \rfloor - \lfloor ns \rfloor \right)$.

By Lemmas \ref{lem:return} and \ref{th.return-time-error-estimate}, for all
$N \ge 1$, we have
\begin{equation} \label{eqn:exp_law}
\left| \nu^{\sigma^{\lfloor ns \rfloor}\omega} \left(R_{A_n}(\sigma^{\lfloor ns \rfloor}\omega) > \lfloor nt \rfloor - \lfloor ns \rfloor \right) - \prod_{j = \lfloor ns \rfloor +1}^{\lfloor nt \rfloor} (1 - \nu^{\sigma^j \omega}(A_n) )\right| \le {\rm (I)} + {\rm (II)} + {\rm (III)},
\end{equation}
with
\[
{\rm (I)} = \sum_{j = \lfloor ns \rfloor +1}^{\lfloor nt \rfloor}\nu^{\sigma^j \omega}\left(A_n \cap \left\{R_{A_n}(\sigma^j \omega) \le N \right\}\right),
\]
\[
{\rm (II)} = \sum_{j = \lfloor ns \rfloor +1}^{\lfloor nt \rfloor} \nu^{\sigma^j \omega}(A_n) \nu^{\sigma^j \omega}(R_{A_n}(\sigma^j \omega) \le N)
\]
and
\[
{\rm (III)} = \sum_{j = \lfloor ns \rfloor +1}^{\lfloor nt \rfloor} \left\| P_{\sigma^j \omega}^N \left( \left[ \one_{A_n} - \nu^{\sigma^j \omega}(A_n)\right] h_{\sigma^j \omega} \right) \right\|_{L^1(m)}.
\]

To estimate (I), we choose $\epsilon > 0$ such that $J \subset \left\{ |x| > \epsilon \right\}$ and we introduce $V_n = \left\{ | \phi | > \epsilon b_n \right\}$. For a measurable subset $V \subset Y$, we also define the shortest return to $V$ by
\[
r_\omega(V) = \inf_{x \in V} R_V(\omega)(x),
\]
and we set
\[
r(V) = \inf_{\omega \in \Omega} r_\omega(V).
\]

We have
\begin{align*}
\nu^{\sigma^j \omega}\left(A_n \cap \left\{R_{A_n}(\sigma^j \omega) \le N \right\}\right) & \le \nu^{\sigma^j \omega}\left(V_n \cap \left\{R_{V_n}(\sigma^j \omega) \le N\right\}\right) \\
& \le \sum_{i=r_{\sigma^j \omega}(V_n)}^N \nu^{\sigma^j \omega}\left(V_n \cap  (T_{\sigma^j \omega}^i)^{-1}(V_n) \right) \\
& \le \sum_{i=r_{\sigma^j \omega}(V_n)}^N \int_Y \one_{V_n} P_{\sigma^j \omega}^i(\one_{V_n} h_{\sigma^j \omega}) dm.
\end{align*}

It follows from {\bf (Dec)} that
\begin{align*}
\left|\int_Y \one_{V_n} P_{\sigma^j \omega}^i(\one_{V_n} h_{\sigma^j \omega}) dm - \nu^{\sigma^j \omega}(V_n) \nu^{\sigma^{i+j} \omega}(V_n) \right| & \le \left\| \one_{V_n} \right\|_{L^1_m} \left\| P_{\sigma^j \omega}^i \left(\left[\one_{V_n} - \nu^{\sigma^j\omega}(V_n) \right] h_{\sigma^j \omega} \right)\right\|_{L^\infty_m} \\
& \le C \theta^i m(V_n) \left\| \left[\one_{V_n} - \nu^{\sigma^j\omega}(V_n) \right] h_{\sigma^j \omega} \right\|_{\rm BV} \\
& \le C \theta^i m(V_n),
\end{align*}
as BV is a Banach algebra, and both $\| \one_{V_n} \|_{\rm BV}$ and $\|h_{\sigma^j \omega} \|_{\rm BV}$ are uniformly bounded. \footnote{Recall that, from the definition of $\phi$, it follows that $V_n$ is an open interval, and thus $\one_{V_n}$ has a uniformly bounded BV norm.}.

Consequently, 
\begin{align*}
{\rm (I)} & \le \sum_{j = \lfloor ns \rfloor +1}^{\lfloor nt \rfloor} \sum_{i=r_{\sigma^j \omega}(V_n)}^N \left[ \nu^{\sigma^j \omega}(V_n) \nu^{\sigma^{i+j} \omega}(V_n) + \cO\left( \theta^i m(V_n)\right)\right] \\
& \le C \left(m(V_n)^2 n N + m(V_n) n \theta^{r(V_n)} \right).
\end{align*}

On the other hand, we have by \eqref{eq.nu-R-N},
\begin{align*}
{\rm (II)} & \le  \sum_{j = \lfloor ns \rfloor +1}^{\lfloor nt \rfloor} \nu^{\sigma^j \omega}(A_n) \sum_{i=1}^N \nu^{\sigma^{i+j} \omega}(A_n) \\
& \le C n N m(A_n)^2,
\end{align*}
and it follows from {\bf (Dec)} that
\begin{align*}
{\rm (III)} & \le C \theta^N  \sum_{j = \lfloor ns \rfloor +1}^{\lfloor nt \rfloor} \left\| \left[ \one_{A_n} - \nu^{\sigma^j \omega}(A_n)\right] h_{\sigma^j \omega} \right\|_{\rm BV} \\
& \le C n \theta^N,
\end{align*}
since $\{h_\omega\}_{\omega \in \Omega}$ is a bounded family in BV, $A_n$ is the union of at most two intervals and thus $\| \one_{A_n }\|_{\rm BV}$ is uniformly bounded. 
We can thus bound \eqref{eqn:exp_law} by 
\[
C \left( m(V_n)^2 n N + m(V_n) n \theta^{r(V_n)} + m(A_n)^2 n N + n \theta^N \right) \le C \left( n^{-1} N + \theta^{r(V_n)} + n \theta^N \right),
\]
and, assuming for the moment that $r(V_n) \to + \infty$, we obtain the conclusion by choosing $N = N(n) = 2 \log n$ and letting $n \to \infty$.

It thus remains to show that $r(V_n) \to + \infty$. Recall that $V_n$ is the ball of centre $x_0$ and radius $b^{-1} \epsilon^{-\alpha} n^{-1}$. Let $R \ge 1$ be a positive integer. Since $x_0$ is assumed to be non-recurrent, and that the collection of maps $T_\omega^j$ for $\omega \in \Omega$ and $0 \le j < R$ is finite, we have that
\[
\delta_R := \inf_{\omega \in \Omega}  \inf_{0 \le j < R} |T_\omega^j(x_0) - x_0| >0
\]
is positive. Since all the maps $T_\omega^j$ are continuous at $x_0$ by assumption, there exists $n_R \ge 1$ such that for all $n \ge n_R$, $j < R$ and $\omega \in \Omega$,
\[
x \in V_n \Longrightarrow | T_\omega^j(x) - T_\omega^j(x_0)| < \frac{\delta_R}{2}.
\]
Increasing $n_R$ if necessary, we can assume that $b^{-1} \epsilon^{-\alpha} n^{-1} < \frac{\delta_R}{2}$ for all $n \ge n_R$.

Then, for all $n \ge n_R$, $\omega \in \Omega$, $j < R$ and $x \in V_n$, we have
\[
|T_\omega^j(x) - x_0| \ge |T_\omega^j(x_0) - x_0| - |T_\omega^j(x) - T_\omega^j(x_0)| > \frac{\delta_R}{2} > b^{-1} \epsilon^{-\alpha} n^{-1},
\]
and thus $T_\omega^j(x) \notin V_n$.

This implies that $r(V_n) > R$ for all $n \ge n_R$, which concludes the
proof as $R$ is arbitrary. \end{proof}


\begin{rem} A quenched exponential
  law for random piecewise expanding maps of the interval is proved
  in Theorem 7.1~\cite[Section 7.1]{HRY20}. Our proof follows the same standard approach. We
  are able to specify that Theorem~\ref{thm:return} holds for non-recurrent
  $x_0$, since our assumptions imply decay of correlations against $L^1$
  observables, which is known to be necessary for this purpose, see
  \cite[Section 3.1]{AFV15}. Our proof is shorter, as we consider the
  simpler setting of finitely many maps, which are all uniformly expanding. In addition we
  use the exponential law in the intermittent case of Theorem 7.2~\cite[Section 7.2]{HRY20} to
  establish the short returns condition of Lemma~\ref{lem:return_intermittent_generic} below.
\end{rem} 

\subsection{Exponential law: proof of Theorem~\ref{thm:return_intermittent}}

In order to prove the exponential law in the intermittent setting,
Theorem~\ref{thm:return_intermittent}, we need a genericity condition on
the point $x_0$ in the definition \eqref{eq:phi-star} of $\phistar$.


\begin{lem} \label{lem:return_intermittent_generic}
  If
  $\gamma_{max}<\frac{1}{3}$, for $m$-a.e.~$x_0$ and
  for $\bP$-a.e.~$\omega\in \Omega$
  \[
    \lim_{n \to \infty} \sum_{j=\lfloor sn\rfloor +1}^{\lfloor tn \rfloor} m \left(B_{c n^{-1}}(x_0) \cap \left\{ R_{B_{c n^{-1}}(x_0)}^{\sigma^j \omega}  \le \lfloor n (\log n)^{-1} \rfloor \right\} \right) = 0.
  \]
 for all $c >0$ and all $0 \le s < t$.
\end{lem}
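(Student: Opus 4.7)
The plan is to bound the expected value of the sum under $m \otimes \bP$, and then extract $m \otimes \bP$-a.e.~convergence via a Chebyshev-plus-Borel-Cantelli argument along a subsequence $n_p = \lfloor p^{1+\eta}\rfloor$, in the vein of Lemma~\ref{lemma:tech_intermittent}.

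First I would localize: for $m$-a.e.~$x_0$ there is $\delta > 0$ with $A_n := B_{cn^{-1}}(x_0) \subset [\delta,1]$ for all large $n$, and on this set \eqref{eq:intermittent-densities-comparable} makes every density $h_{\sigma^j\omega}$ uniformly comparable to constants. Then I use the union bound $\{R^{\sigma^j\omega}_{A_n} \le K_n\} \subset \bigcup_{k=1}^{K_n}(T^k_{\sigma^j\omega})^{-1}(A_n)$ with $K_n = \lfloor n/\log n\rfloor$, and Fubini over $x_0$: the constraints $|x-x_0| < cn^{-1}$ and $|T^k_{\sigma^j\omega}(x)-x_0| < cn^{-1}$ force $|T^k_{\sigma^j\omega}(x) - x| < 2cn^{-1}$, so
\[
\int_\delta^1 \sum_{j=\lfloor sn\rfloor+1}^{\lfloor tn\rfloor} m\bigl(A_n \cap \{R^{\sigma^j\omega}_{A_n} \le K_n\}\bigr)\, dx_0 \; \le \; \frac{2c}{n} \sum_{j=\lfloor sn\rfloor+1}^{\lfloor tn\rfloor} \sum_{k=1}^{K_n} m\bigl(\{x\,:\,|T^k_{\sigma^j\omega}(x)-x| < 2cn^{-1}\}\bigr).
\]

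The heart of the matter is then to estimate the near-recurrence measure. I would split at $M_n := \lfloor\log n\rfloor$: for the short range $k \le M_n$ I invoke the quenched exponential hitting-time law for intermittent maps from \cite[Section 7.2]{HRY20}, which holds for $m$-a.e.~$x_0$ and provides a bound $o(n^{-1})$ per iterate; for the moderate range $k > M_n$ the polynomial decay of correlations at rate $n^{1-1/\gamma_{min}}$ recalled just before Section~\ref{ssec:exp_law}, applied to a H\"older mollification of $\one_{A_n}$ at scale $n^{-1}$, yields $m(\{|T^k_{\sigma^j\omega}(x)-x| < 2cn^{-1}\}) \le C/n^2 + C k^{1-1/\gamma_{min}} n^\kappa$ for some $\kappa \in (0,1)$. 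The hypothesis $\gamma_{max} < 1/3$ is precisely what guarantees $1/\gamma_{min} \ge 1/\gamma_{max} > 3$, which makes $\sum_{k > M_n} k^{1-1/\gamma_{min}}$ summable with enough margin to absorb the $n^\kappa$ factor coming from the H\"older norm of the mollified indicator.

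The main obstacle will be to make these estimates hold \emph{uniformly} across the $\Theta(n)$ random shifts $\sigma^j\omega$, rather than for a single $\omega$. This is the same difficulty overcome in Lemma~\ref{lemma:tech_intermittent}: since $\omega \mapsto h_\omega$ is not H\"older in the intermittent setting, one replaces $h_\omega$ by its finite-$i$ pullback approximation $h^i_\omega = P^i_{\sigma^{-i}\omega}\one$ with $i=n^a$ for a suitable $a \in (0,1)$, uses Chebyshev together with the $\bP$-independence of distant $\omega$-coordinates to get Borel-Cantelli summability along $n_p = \lfloor p^{1+\eta}\rfloor$ with $\eta > a/(1-a)$, and then interpolates between consecutive $n_p$'s to recover the full sequence. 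Once the $\bE_{m\otimes\bP}$ bound is pushed through in this way, Borel-Cantelli delivers the claimed $m$-a.e.~$x_0$ and $\bP$-a.e.~$\omega$ statement.
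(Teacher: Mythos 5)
Your overall architecture (reduce to short returns, then upgrade a single-$\omega$ estimate to a statement uniform over the $\Theta(n)$ shifts $\sigma^j\omega$) is the right one, but two of your key steps do not go through. First, the Fubini reduction to the near-recurrence sets $\{x : |T^k_{\sigma^j\omega}(x)-x| < 2cn^{-1}\}$ requires a bound $m(\cE_k(\epsilon)) \lesssim \epsilon$ of the type the paper proves only for \emph{uniformly expanding} maps (Lemma~\ref{lem:short_returns}, whose proof uses $\inf |(T^n_\omega)'| \ge \lambda^n > 1$). For LSV maps the minimal expansion is $1$, branches of $T^k$ that linger near the neutral fixed point have derivative arbitrarily close to $1$, and the factor $1/(1-\lambda^{-k})$ in that argument blows up; no such estimate is available here, and your attempted substitute via decay of correlations is both conceptually off (correlation decay controls $m(A \cap T^{-k}A)$ for a fixed set $A$, not the diagonal quantity $\int \one_{B_\epsilon(x)}(T^kx)\,dm(x)$) and quantitatively fatal: your stated bound $C/n^2 + Ck^{1-1/\gamma_{min}}n^{\kappa}$ with $\kappa>0$, summed over $k$, leaves a term of order $n^{\kappa}$ that the polylogarithmically small tail $\sum_{k>\log n}k^{1-1/\gamma_{min}}$ cannot absorb. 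Moreover, once you have integrated out $x_0$ you can no longer invoke the $x_0$-specific exponential law of \cite{HRY20} for the short range, so the two halves of your splitting live on incompatible reductions.

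Second, the decorrelation device of Lemma~\ref{lemma:tech_intermittent} cannot deliver the a.e.-$\omega$ uniformity here. That lemma works because $f_n^i(\omega)$ depends on only $i=n^a$ coordinates of $\omega$, giving independence for shifts separated by more than $n^a$ and hence a variance gain. The function $\omega \mapsto m(V_n \cap \{R_{V_n}(\omega) \le N\})$ depends on the first $N = \lfloor n/\log n\rfloor$ coordinates, so almost all pairs $(j,j')$ with $j,j' \le n$ fail to decorrelate; the resulting variance bound is at best $O(1/\log n)$, which is not summable along any polynomial subsequence $n_p = \lfloor p^{1+\eta}\rfloor$, and Borel--Cantelli fails. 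The paper's proof avoids both problems: it first extracts, from the quenched exponential \emph{return}-time law of \cite[Theorem 7.2]{HRY20} by letting $t \to 0$ along a sequence, the single-$\omega$ estimate $m(V_n \cap \{R_{V_n}(\omega)\le N\}) = o(n^{-1})$ for $m$-a.e.~$x_0$ and $\bP$-a.e.~$\omega$; it then upgrades this to the sum over shifts not by Chebyshev on $n$-dependent functions but by applying the Birkhoff ergodic theorem to the indicators of the \emph{fixed} good sets $\Omega_k^{n_0} = \{\omega : m(V_n\cap\{R_{V_n}(\omega)\le N\}) \le 2^{-k}/n \ \text{for all } n\ge n_0\}$, using the trivial bound $m(V_n)$ on the small fraction of bad shifts. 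You would need to import this (or an equivalent) device to close your argument.
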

 
\begin{proof}
  Let $N = \lfloor n (\log n)^{-1} \rfloor$ an $V_n = B_{c n^{-1}}(x_0)$. First, we remark that for $m$-a.e.~$x_0$ and $\bP$-a.e.~$\omega$,
\begin{equation} \label{eqn:short_return_inter}
m \left(V_n \cap \left\{R_{V_n}(\omega) \le N \right\} \right) = o(n^{-1}).
\end{equation}
This is a consequence of~\cite[Theorem 7.2]{HRY20}. Their
  result is stated for two intermittent LSV maps both with
  $\gamma <\frac{1}{3}$ but generalizes immediately to a finite collection
  of maps with a uniform bound of $\gamma_{max}<\frac{1}{3}$. The
  exponential law for return times to nested balls imples that for a fixed $t$, for $m$-a.e $x_0$ and $\bP$-a.e.~$\omega$
  \[
\lim_{n \to \infty} \frac{1}{\nu^\omega(V_n)} \nu^\omega \left(V_n \cap \left\{R_{V_n}(\omega) \le nt \right\} \right) = 1-e^{-t}.
\]
which shows in particular, since   $\left\{R_{V_n}(\omega) \le N \right\} \subset  \left\{R_{V_n}(\omega) \le nt \right\}$ for all $n$ large enough, that for all $t>0$, $m$-a.e $x_0$ and $\bP$-a.e.~$\omega$
\begin{equation} \label{eqn:limsup_returntimes}
\limsup_{n \to \infty} \frac{1}{\nu^\omega(V_n)} \nu^\omega \left(V_n \cap \left\{R_{V_n}(\omega) \le N \right\} \right)  \le 1 - e^{-t}.
\end{equation}

Using \eqref{eq:intermittent-densities-comparable}, taking the limit $t \to 0$ proves \eqref{eqn:short_return_inter}. Note that, even though the set of full measure of $x_0$ and $\omega$ such that \eqref{eqn:limsup_returntimes} holds may depend on $t$, it is enough to consider only a sequence $t_k \to 0$. 

Now, for $k \ge 0$ and $n_0 \ge 1$, we introduce the set
\[
\Omega_k^{n_0} = \left\{ \omega \in \Omega \, : \, m\left(V_n \cap \left\{R_{V_n}(\omega) \le N \right\} \right) \le \frac{2^{-k}}{n} \text{ for all } n \ge n_0 \right\}.
\]

According to \eqref{eqn:short_return_inter}, we have for all $k \ge 0$,
\[
\lim_{n_0 \to \infty} \bP(\Omega_k^{n_0}) = \bP\left( \bigcup_{n_0 \ge 1} \Omega_k^{n_0} \right) = 1.
\]

By the Birkhoff ergodic theorem, for al $k \ge 0$, $n_0 \ge 1$ and $\bP$-a.e.~$\omega$, 
\[
\lim_{n \to \infty} \frac{1}{n} \sum_{j=0}^{n-1} \mathds{1}_{\Omega_k^{n_0}}(\sigma^j \omega) = \bP(\Omega_k^{n_0}),
\]
which implies that for all $0 \le s < t$,
\[
\lim_{n \to \infty} \frac{1}{\left(\lfloor nt \rfloor - \lfloor ns \rfloor\right)} \sum_{j=\lfloor ns \rfloor +1}^{\lfloor nt \rfloor} \mathds{1}_{\Omega_k^{n_0}}(\sigma^j \omega) = \bP(\Omega_k^{n_0}).
\]

Let $n_0 = n_0(\omega, k)$ such that $\bP(\Omega_k^{n_0}) \ge 1 - 2^{-k}$, and for all $n \ge n_0$,
\[
\frac{1}{\left(\lfloor nt \rfloor - \lfloor ns \rfloor \right)} \sum_{j=\lfloor ns \rfloor +1}^{\lfloor nt \rfloor} \mathds{1}_{\Omega_k^{n_0}}(\sigma^j \omega) \ge \bP(\Omega_k^{n_0}) - 2^{-k}.
\]

Then, for all $n \ge n_0(\omega, k)$ we have
\[
\frac{1}{\left( \lfloor nt \rfloor - \lfloor ns \rfloor \right)} \sum_{j=\lfloor ns \rfloor +1}^{\lfloor nt \rfloor} \mathds{1}_{(\Omega_k^{n_0})^c}(\sigma^j \omega) \le 2^{-(k-1)}.
\]

Consequently,
\begin{align*}
\sum_{\lfloor ns \rfloor +1}^{\lfloor nt \rfloor} m\left(V_n \cap \left\{R_{V_n}(\omega) \le N \right\} \right)  \le \left(\lfloor nt \rfloor - \lfloor ns \rfloor \right) \frac{2^{-k}}{n} +   \left(\lfloor nt \rfloor - \lfloor ns \rfloor \right) 2^{-(k-1)} m(V_n) .
\end{align*}
This proves that 
\[
\limsup_{n \to \infty} \sum_{\lfloor ns \rfloor +1}^{\lfloor nt \rfloor} m\left(V_n \cap \left\{R_{V_n}(\omega) \le N \right\} \right) \le C \, 2^{-k},
\]
and the result follows by taking the limit $k \to \infty$.

Note that the set of $x_0$ and $\omega$ for which the lemma holds depends a priori on $c>0$, but it is enough to consider a countable and dense set of $c$, since for $c < c'$,
\[
\left\{ B_{c n^{-1}}(x_0) \cap \left\{ R_{B_{c n^{-1}}(x_0)}^\omega \le N \right\} \right\} \subset \left\{ B_{c' n^{-1}}(x_0) \cap \left\{ R_{B_{c' n^{-1}}(x_0)}^\omega \le N \right\} \right\}.
\]
\end{proof}

%
%

The exponential law for random intermittent maps follows from Lemma~\ref{lem:return_intermittent_generic}:

\begin{proof}[Proof of Theorem~\ref{thm:return_intermittent}]
  We consider the three terms in~\eqref{eqn:exp_law} with $N = \lfloor n (\log n)^{-1} \rfloor$.

  Let $V_n = \left\{ | \phi | > \epsilon b_n\right\}$ where $\epsilon >0$ is such that $A_n \subset V_n$ for all $n\ge 1$. Since $V_n$ is a ball of centre $x_0$ and radius $b^{-1} \epsilon^{-\alpha} n^{-1}$, and since $V_n \subset [\delta, 1]$,  the term 
  \[ {\rm (I)} = \sum_{j = \lfloor ns \rfloor +1}^{\lfloor nt
      \rfloor}\nu^{\sigma^j \omega}\left(A_n \cap \left\{R_{A_n}(\sigma^j
        \omega) \le N \right\}\right) \le C \sum_{j = \lfloor ns \rfloor +1}^{\lfloor nt
      \rfloor} m\left(V_n \cap \left\{R_{V_n}(\sigma^j
        \omega) \le N \right\}\right)
  \]
  tends to zero  by Lemma~\ref{lem:return_intermittent_generic} for $m$-a.e~ $x_0$.

The term 
\[
  {\rm (II)} = \sum_{j = \lfloor ns \rfloor +1}^{\lfloor nt \rfloor}
  \nu^{\sigma^j \omega}(A_n) \nu^{\sigma^j \omega}(R_{A_n}(\sigma^j \omega)
  \le N)
  \le CnNm(A_n)^2
\]
also tends to zero since $N=o(n)$. Lastly we consider 
\[
  {\rm (III)} = \sum_{j = \lfloor ns \rfloor +1}^{\lfloor nt \rfloor} \left\| P_{\sigma^j \omega}^N \left( \left[ \one_{A_n} - \nu^{\sigma^j \omega}(A_n)\right] h_{\sigma^j \omega} \right) \right\|_{L^1(m)}.
\]
    
We approximate $\one_{A_n}$ by a $C^1$ function $g$ such that
$\|g\|_{C^1}\le n^{\tau}$, $g=\one_{A_n}$ on $A_n$ and
$\|g-\one_{A_n}\|_{L^1}\le n^{-\tau}$ (recall $A_n$ is two intervals
of length roughly $\frac{1}{n}$ so a simple smoothing at the endpoints of
the intervals allows us to find such a function $g$). Later we will specify
$\tau >1$ will suffice. By~\cite[Lemma 3.4]{NPT} with $h = h_\omega$ and $\varphi = g - m(g h_\omega)$, for all $\omega$,
    \begin{align*}
      \left\| P_{\omega}^N ([g-m(g h_\omega)] h_{\omega})\right\|_{L^1}&\le C n^\tau N^{1-\frac{1}{\gamma_{max}}} (\log N)^{\frac{1}{\gamma_{max}}} \\
& \le C n^{\tau +1 - \frac{1}{\gamma_{max}}} (\log n)^{\frac{2}{\gamma_{max}} - 1}.
    \end{align*}
    Using the decomposition
    $\one_{A_n} -\nu^{\omega} (A_n)=(\one_{A_n}-g)
    -(\nu^{\omega} (A_n)-m(gh_\omega))+(g-m(gh_\omega))$ we estimate, leaving out the log term,
 
    \[
   {\rm (III)} \le C \left[n^{1-\tau}+n^{\tau
       +2-\frac{1}{\gamma_{max}}}\right]
    \]
    where the value of $C$ may change line to line. Taking
    $\gamma_{max}<\frac{1}{3}$ and $1<\tau <\frac{1}{\gamma_{max}}-2$
    suffices.
\end{proof}
 
\section{Point process results}

We now proceed to the proof of the Poisson convergence. In
Section~\ref{sec:annealed} we will consider an annealed version of our
results.



\subsection{Uniformly expanding maps: proof of Theorem~\ref{thm:poisson_expanding}}\label{ssec:cv_poisson}

Recall Theorem~\ref{thm:poisson_expanding}: under the conditions of
Section~\ref{sec:expanding_iid}, in particular {\bf (LY)}, {\bf (Min)} and {\bf (Dec)}, if
$x_0 \notin \cD$ is non-recurrent, then for $\bP$-a.e.
$\omega \in \Omega$
\[
  N_n^\omega \dto N_{(\alpha)}
\]
under the probability measure  $\nu^\omega$.

Our proof of Theorem~\ref{thm:poisson_expanding} uses the existence of a
spectral gap for the associated transfer operators $P^n_{\omega}$, and
breaks down in the setting of intermittent maps. The use of the
spectral gap is encapsulated in the following lemma.

\begin{lem}\label{th.bound-Pn}
  Assume {\bf (LY)}. Then there exists $C >0$ such that for all
  $\omega \in \Omega$, all $f, f_n \in {\rm BV}$ with
  \[
    \sup_{j \ge 1} \| f_j \|_{L^\infty(m)} \le 1 \: \text{ and } \: \sup_{j
      \ge 1} \|f_j \|_{\rm BV} < \infty,
  \]
  we have
  \[
    \sup_{n \ge 0} \left\| P_\omega^n \left( f \cdot \prod_{j=1}^n f_j
        \circ T_\omega^j \right)\right\|_{\rm BV} \le C \|f\|_{\rm BV}
    \left( \sup_{j \ge 1} \| f_j \|_{\rm BV} \right)
  \]
\end{lem}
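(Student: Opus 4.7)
The starting point is the adjoint identity $P_\omega(u\cdot v\circ T_\omega)=v\cdot P_\omega u$, which gives the recursion
\[
  H_{k+1}=f_{k+1}\cdot P_{\sigma^k\omega}(H_k),\qquad H_0=f,
\]
where $H_n:=P_\omega^n(f\prod_{j=1}^n f_j\circ T_\omega^j)$. Iterating this identity using the same duality yields, for any $r\ge1$,
\[
  H_{k+r}=P_{\sigma^k\omega}^r\!\bigl(H_k\cdot \Phi_k^r\bigr),\qquad \Phi_k^r:=\prod_{j=1}^r f_{k+j}\circ T_{\sigma^k\omega}^j.
\]
This identity is the workhorse of the proof: it transfers the multiplicative structure entirely onto a single application of the iterated transfer operator $P_{\sigma^k\omega}^r$, to which the Lasota--Yorke inequality can be applied.

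\textbf{$L^1$ estimate.} Since $\|f_j\|_{L^\infty(m)}\le1$ uniformly and $P$ is an $L^1$-contraction, I would first note that $\|H_n\|_{L^1(m)}\le \|f\|_{L^1(m)}\le\|f\|_{\mathrm{BV}}$. This handles the $L^1$-part of the BV norm and, crucially, remains uniformly bounded along the recursion.

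\textbf{Variation estimate.} For the variation I plan to apply the iterated form of (LY), namely $\|P_\omega^r g\|_{\mathrm{BV}}\le C\lambda^r\|g\|_{\mathrm{BV}}+C\|g\|_{L^1(m)}$, to $g=H_k\Phi_k^r$. The bound $\|\Phi_k^r\|_\infty\le1$ together with the product rule $\mathrm{var}(uv)\le\mathrm{var}(u)\|v\|_\infty+\|u\|_\infty\mathrm{var}(v)$ gives $\mathrm{var}(\Phi_k^r)\le\sum_{j=1}^r\mathrm{var}(f_{k+j}\circ T_{\sigma^k\omega}^j)\le K_r\cdot B$, where $B:=\sup_j\|f_j\|_{\mathrm{BV}}$ and $K_r$ is a constant depending only on $r$ and on the (finite) number of monotonicity intervals of the iterates $T_{\sigma^k\omega}^j$, $j\le r$. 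Consequently
\[
  \|H_k\Phi_k^r\|_{\mathrm{BV}}\le\|H_k\|_{\mathrm{BV}}\bigl(1+K_rB\bigr),
\]
so that the iterated (LY) yields
\[
  \|H_{k+r}\|_{\mathrm{BV}}\le C\lambda^r\bigl(1+K_rB\bigr)\,\|H_k\|_{\mathrm{BV}}+C\|f\|_{L^1(m)}.
\]

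\textbf{Closing the recursion and main obstacle.} The plan is to choose $r_0$ so that $C\lambda^{r_0}(1+K_{r_0}B)\le1/2$; iterating along blocks of length $r_0$ then yields $\sup_n\|H_n\|_{\mathrm{BV}}\le C'\|f\|_{\mathrm{BV}}\cdot B$, where the dependence on $B$ comes in through both the chosen $r_0$ and the naive block-wise bound $\|H_k\|_{\mathrm{BV}}\le(CM)^k B^k\|f\|_{\mathrm{BV}}$ for $0\le k<r_0$ obtained from the Banach algebra property of BV and the single-step (LY) bound $\|P_\omega\|_{\mathrm{BV}\to\mathrm{BV}}\le M$. The main obstacle is precisely ensuring that the decay rate $\lambda^r$ from the iterated (LY) dominates the complexity growth $K_r$ of the pullbacks $f_j\circ T^j$; here I expect to invoke (Dec) to absorb a decomposition $H_k=c_k h_{\sigma^k\omega}+\widetilde H_k$ with $\widetilde H_k\in\mathrm{BV}_0$, so that the dangerous piece is acted on by $P_\omega^r$ restricted to $\mathrm{BV}_0$, where the decay $C\theta^r$ is available for any $r$ regardless of the size of $B$.
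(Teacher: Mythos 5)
Your starting identities are sound and match the paper's setup: the duality $P_\omega^n(g\cdot h\circ T_\omega^n)=P_\omega^n(g)\,h$ does give $H_{k+r}=P_{\sigma^k\omega}^r(H_k\Phi_k^r)$, the $L^1$ contraction handles the $L^1$ part of the norm, and $\var(\Phi_k^r)\le K_r B$ with $K_r$ controlled by $\sup_\omega\#\cA_\omega^r$ is exactly the paper's Step~1. The gap is in how you feed this into {\bf (LY)}. By packaging the block as $\|H_k\Phi_k^r\|_{\rm BV}\le\|H_k\|_{\rm BV}(1+K_rB)$ you allow the variation of the accumulated function $H_k$ to be multiplied by $1+K_rB$, so closing the recursion requires $C\lambda^{r_0}(1+K_{r_0}B)\le 1/2$. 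This cannot be arranged: each monotonicity branch of $T_\omega^r$ has length at most $\lambda_{\exp}^{-r}$ (with $\lambda_{\exp}>1$ the uniform expansion), so $K_r\ge\lambda_{\exp}^{r}$ grows exponentially at a rate that is not dominated by the decay $\lambda^r$ obtained by iterating {\bf (LY)} (for $\beta$-maps, for instance, $\lambda^rK_r$ stays bounded away from $0$). Even granting such an $r_0$, it would depend on $B=\sup_j\|f_j\|_{\rm BV}$, and your block-wise bound $(CM)^kB^k\|f\|_{\rm BV}$ for $k<r_0(B)$ then produces a super-linear dependence on $B$, whereas the lemma asserts a bound linear in $\sup_j\|f_j\|_{\rm BV}$ with $C$ independent of the $f_j$. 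Finally, your fallback via {\bf (Dec)} is not available (the lemma assumes only {\bf (LY)}), and in any case it does not address the actual difficulty, which is the size of $\var(H_k\Phi_k^r)$ \emph{before} $P_{\sigma^k\omega}^r$ is applied.

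The repair, which is what the paper's Step~2 accomplishes, is to arrange that $\var(H_k)$ is never multiplied by anything larger than $1$. Use the product rule with the two uniform $L^\infty$ bounds: $\var(H_k\Phi_k^r)\le\var(H_k)\,\|\Phi_k^r\|_{L^\infty(m)}+\|H_k\|_{L^\infty(m)}\var(\Phi_k^r)\le\var(H_k)+C\|f\|_{\rm BV}\,K_rB$, where $\|H_k\|_{L^\infty(m)}\le C\|f\|_{\rm BV}$ follows from positivity of $P_\omega^k$ (so $|H_k|\le P_\omega^k|f|$ pointwise, since $\prod_j|f_j|\le 1$) together with the uniform BV bound $\|P_\omega^k\|_{{\rm BV}\to{\rm BV}}\le C$ from {\bf (LY)}. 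The variation of each new block then enters only \emph{additively}, with a coefficient bounded uniformly in $k$ and $n$; a single fixed $r$ with $C\lambda^r\le 1/2$ (depending only on the system) closes the geometric series and yields a bound $C\|f\|_{\rm BV}\,(1+\sup_j\|f_j\|_{\rm BV})$ uniform in $n$. The paper implements precisely this, one factor at a time for the one-step system (its inductive claim \eqref{eqn:claim}) and then by regrouping into $r$-blocks in its Steps~3--4; the decisive ingredient your argument is missing is the uniform $L^\infty$ control of $P_\omega^k\bigl(f\prod_{j\le k}f_j\circ T_\omega^j\bigr)$.
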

  %
  %

\begin{proof} We proceed in four steps.

{\bf Step 1. } We define
\[
g_\omega^n = \prod_{j=0}^n f_j \circ T_\omega^j,
\]
where we have set $f_0 = \one$. We observe that for all $n \ge 0$, there exists $C_n > 0$ such that for all $\omega \in \Omega$,
\begin{equation}\label{eqn:g_n}
  \|g_\omega^n \|_{L^\infty(m)} \le \left(
    \sup_{j \ge 1} \| f_j \|_{L^\infty(m)} \right)^{n+1} \le 1 \: \text{
    and } \: \|g_\omega^n\|_{\rm BV} \le C_n \left( \sup_{j \ge 1}
    \|f_j\|_{\rm BV} \right).
\end{equation}
The first estimate is immediate, and the second follows, because
\begin{align*}
\var(g_\omega^{n+1}) &\le \var(g_\omega^n) \|f_{n+1} \circ T_\omega^{n+1}\|_{L^\infty(m)} + \|g_\omega^n\|_{L^\infty(m)} \var(f_{n+1}\circ T_\omega^{n+1}) \\
& \le  \var(g_\omega^n) + \var(f_{n+1}\circ T_\omega^{n+1}) \\
& = \var(g_\omega^n) + \sum_{I \in \cA_\omega^{n+1}} \var_I(f_{n+1}\circ T_\omega^{n+1}) \\
& = \var(g_\omega^n) + \sum_{I \in \cA_\omega^{n+1}} \var_{T_\omega^{n+1}(I)}(f_{n+1}) \\
& \le \var(g_\omega^n) + \left(\#  \cA_\omega^{n+1}\right) \var(f_{n+1}),
\end{align*}
and so we can define by induction $C_{n+1} = C_n + \sup_{\omega \in \Omega} \# \cA_\omega^{n+1}$ which is finite, as there are only finitely many maps in $\cS$.

{\bf Step 2. } We first prove the lemma in the case where $r=1$ in the condition {\bf (LY)}. Before, we claim that for $f \in {\rm BV}$ and sequences $(f_j) \subset {\rm BV}$ as in the statement, we have
\begin{multline} \label{eqn:claim}
\var\left(P_\omega^n(f g_\omega^n)\right) \le \sum_{j=0}^n \rho^j  \| P_\omega^{n-j}(f g_\omega^{n-j-1}) \|_{L^\infty(m)} \|f_{n-j}\|_{\rm BV} \\ + D \sum_{j=0}^{n-1} \rho^j \|P_\omega^{n-1-j}(f g_{\omega}^{n-1-j})\|_{L^1(m)} \|f_{n-j}\|_{L^\infty(m)}.
\end{multline}

This implies the lemma when $r=1$, since
\[
\| P_\omega^{n-j}(f g_\omega^{n-j-1}) \|_{L^\infty(m)} \le \|g_\omega^{n-j-1}\|_{L^\infty(m)} \|P_\omega^{n-j} |f| \|_{L^\infty(m)} \le C \|f\|_{\rm BV},
\]
and
\[
 \|P_\omega^{n-j}(f g_{\omega}^{n-j})\|_{L^1(m)} \le \|f g_{\omega}^{n-j} \|_{L^1(m)}\le  \|f\|_{L^\infty(m)} \|g_\omega^{n-j}\|_{L^1(m)} \le \|f\|_{\rm BV}.
\]

We prove the claim by induction on $n \ge 0$. It is immediate for $n=0$, and for the induction step, we have, using {\bf (LY)},
\begin{align*}
\var &(P_\omega^{n+1} (f g_\omega^{n+1})) \\ & = \var(P_\omega^{n+1}(f g_\omega^n f_{n+1} \circ T_\omega^{n+1})) = \var(P_\omega^{n+1}(f g_\omega^n) f_{n+1}) \\
& \le \var(P_\omega^{n+1}(f g_\omega^n)) \|f_{n+1}\|_{L^\infty(m)} + \|P_\omega^{n+1} (f g_\omega^n) \|_{L^\infty(m)} \var(f_{n+1}) \\
& \le \left( \rho \var(P_\omega^{n}(f g_\omega^n)) + D \|P_\omega^n (f g_\omega^n) \|_{L^1(m)} \right) \|f_{n+1}\|_{L^\infty(m)} + \|P_\omega^{n+1} (f g_\omega^n) \|_{L^\infty(m)} \var(f_{n+1}) \\
& \le \rho \var(P_\omega^{n}(f g_\omega^n)) + D \|P_\omega^n (f g_\omega^n) \|_{L^1(m)} \|f_{n+1}\|_{L^\infty(m)} + \|P_\omega^{n+1} (f g_\omega^n) \|_{L^\infty(m)}  \|f_{n+1}\|_{\rm BV},
\end{align*}
which proves \eqref{eqn:claim} for $n+1$, assuming it holds for $n$.

{\bf Step 3. } Now, we consider the general case $r \ge 1$ and we assume
that $n$ is of the particular form $n = pr$, with $p \ge 0$. We note that
the random system defined with
$\cT = \left\{ T_\omega^r \right\}_{\omega \in \Omega}$ satisfies the
condition {\bf (LY)} with $r = 1$. Consequently, by the second step and
\eqref{eqn:g_n}, we have
\begin{align*}
\left\| P_\omega^n (f g_\omega^n)\right\|_{\rm BV} & = \left\| P_{\sigma^{r-1} \omega}^r \circ \ldots \circ P_\omega^r \left(f \prod_{j=1}^p g_{\sigma^{jr}\omega}^r \circ T_\omega^{jr} \right) \right\|_{\rm BV} \\
& \le C \|f\|_{\rm BV} \left( \sup_{j \ge 1} \| g_{\sigma^{jr} \omega}^r
  \|_{\rm BV}\right) 
 \le C C_r \|f\|_{\rm BV} \left(\sup_{j \ge 1} \|f_j\|_{\rm BV} \right).
\end{align*}

{\bf Step 4. } Finally, if $n = pr + q$, with $p \ge 0$ and $q \in \{0, \ldots, r-1\}$,  as an immediate consequence of {\bf (LY)}, we obtain
\begin{align*}
 \|P_\omega^n(f g_\omega^n) \|_{\rm BV} & = \|P_{\sigma^{pr}\omega}^q P_\omega^{pr}(f g_\omega^{pr} g_{\sigma^{pr}\omega}^q \circ T_\omega^{pr})\|_{\rm BV} \\
& = \|P_{\sigma^{pr}\omega}^q (P_\omega^{pr}(f g_\omega^{pr})g_{\sigma^{pr} \omega}^q) \|_{\rm BV}
 \le C \| P_\omega^{pr}(f g_\omega^{pr})g_{\sigma^{pr} \omega}^q\|_{\rm BV}.
\end{align*}

But, from Step 3, we have
\begin{align*}
\|  P_\omega^{pr}(f g_\omega^{pr})g_{\sigma^{pr} \omega}^q \|_{L^1(m)} & \le \| g_{\sigma^{pr} \omega}^q \|_{L^\infty(m)} \| P_\omega^{pr} (f g_\omega^{pr})\|_{L^1(m)} \\
& \le  \| P_\omega^{pr} (f g_\omega^{pr})\|_{L^1(m)} 
 \le C \|f\|_{\rm BV} \left(\sup_{j \ge 1} \|f_j \|_{\rm BV}\right),
\end{align*}

and, using \eqref{eqn:g_n},

\begin{align*}
\var( P_\omega^{pr}(f g_\omega^{pr})g_{\sigma^{pr} \omega}^q) & \le \|P_\omega^{pr}(f g_\omega^{pr})\|_{L^\infty(m)} \var(g_{\sigma^{pr}\omega}^q) + \var(P_\omega^{pr}(f g_\omega^{pr})) \| g_{\sigma^{pr}\omega}^q \|_{L^\infty(m)} \\
& \le \left[ C_q \|g_\omega^{pr}\|_{L^\infty(m)} \|P_\omega^{pr}  |f|\|_{L^\infty(m)} + C \|f\|_{\rm BV}\right] \left( \sup_{j \ge 1 } \| f_j\|_{\rm BV}\right) \\
& \le C \left( 1 + \max_{q = 0, \ldots, r-1} C_q \right) \|f\|_{\rm BV} \left( \sup_{j \ge 1 } \| f_j\|_{\rm BV}\right),
\end{align*}
which concludes the proof of the lemma.
\end{proof}

\begin{proof}[Proof of Theorem~\ref{thm:poisson_expanding}]

We denote by $\cR$ the family of finite unions of rectangles $R$ of the form $R = (s,t] \times J$ with $J \in \cJ$. By Kallenberg's theorem, see \cite[Theorem 4.7]{Kal75} or \cite[Proposition 3.22]{Res87}, $N_n^\omega \dto N_{(\alpha)}$ if for any $R \in \cR$, 

\[(a)~\lim_{n \to \infty} \nu^\omega(N_n^\omega(R) = 0) = \bP(N_{(\alpha)}(R) = 0),\]
and
\[(b)~\lim_{n \to \infty} \bE_{\nu^\omega}N_n^\omega(R) = \bE N_{(\alpha)}(R).\]

We first prove (b). We write 
\[
R =  \bigcup_{i=1}^k R_i,
\]
with $R_i = (s_i, t_i] \times J_i$ disjoint.

Then
\[
\bE N_{(\alpha)}(R) = \sum_{i=1}^k (t_i - s_i) \Pi_\alpha(J_i)
\]
and
\begin{align*}
\bE_{\nu^\omega}N_n^\omega(R) = \sum_{i= 1}^k \bE_{\nu^\omega}N_n^\omega((s_i, t_i] \times J_i) &= \sum_{i=1}^k \sum_{n s_i < j \le n t_i} \bE_{\nu^\omega}(\one_{\phi_{x_0}^{-1}(b_n J_i)} \circ T_\omega^{j-1}) \\ &= \sum_{i=1}^k \sum_{n s_i < j \le n t_i} \nu^{\sigma^{j-1}\omega}(\phistar^{-1}(b_n J_i)) \\
&= \sum_{i=1}^k \sum_{j=\lfloor ns_i \rfloor}^{\lfloor nt_i \rfloor - 1} \nu^{\sigma^j \omega}(\phistar^{-1}(b_n J_i)).
\end{align*}

By Lemma \ref{lem:tech}, for $\bP$-a.e.~$\omega \in \Omega$, we have 
\[
\lim_{n \to \infty} \sum_{i=1}^k \sum_{j=\lfloor ns_i \rfloor}^{\lfloor nt_i \rfloor - 1} \nu^{\sigma^j \omega}(\phi_{x_0}^{-1}(b_n J_i)) = (t_i - s_i) \Pi_\alpha(J_i),
\]
which proves (b).

We next establish (a). We will use induction on the number of ``time'' intervals
  $(s_i,t_i] \subset (0,\infty]$. Let $R=(s_1,t_1]\times J_1$ where
  $J_1\in \mathcal{J}$. Define
  \[
    A_n =\phistar^{-1} (b_n J_1).
  \]
  Since 
  \begin{align*}
    \{N_n^{\omega} (R)=0\}
    & = \{ x: T_{\omega}^j (x)\not \in A_n, ns_1<j+1 \le nt_1 \} \\
&= \left\{ 1_{A_n^c}\circ T_{\omega}^{\lfloor ns_1 \rfloor} \cdot 1_{A_n^c}\circ
      T_{\omega}^{\lfloor ns_1 \rfloor+1} \cdot \ldots \cdot 1_{A_n^c}\circ
      T_{\omega}^{\lfloor nt_1 \rfloor-1} \not= 0\right\} \\
&    = \left\{ x: \left(\prod_{j=0}^{\lfloor n t_1 \rfloor  -1 - \lfloor n s_1 \rfloor}1_{A_n^c}\circ
        T_{\sigma^{\lfloor n s_1 \rfloor}\omega}^{j}\right) \circ T_{\omega}^{\lfloor n s_1 \rfloor}(x) \not= 0\right\},
  \end{align*}
  we have that, 
  \begin{multline} \label{eqn:rounding}
    \left|\nu^{\omega} (N_n^{\omega} (R)=0) - \nu^{\sigma^{\lfloor ns_1 \rfloor} \omega}
      \left(R_{A_n}(\sigma^{\lfloor ns_1 \rfloor} \omega) > \lfloor n(t_1-s_1) \rfloor \right)\right| \\
 \le \nu^{\sigma^{\lfloor n s_1 \rfloor} \omega}(R_{A_n}(\sigma^{\lfloor n s_1 \rfloor}\omega) = 0) 
    = \nu^{\sigma^{\lfloor n s_1 \rfloor}\omega}(A_n) \le C m(A_n)\to 0,
  \end{multline}
  because, due to rounding when taking integer parts,
  $\lfloor n t_1 \rfloor - \lfloor n s_1 \rfloor -1$ is either equal to
  $\lfloor n( t_1 - s_1) \rfloor -1$ or to $\lfloor n(t_1 - s_1) \rfloor$.
  By Theorem~\ref{thm:return},
  \[
    \nu^{\sigma^{\lfloor ns_1\rfloor} \omega} (R_{A_n}(\sigma^{\lfloor ns_1 \rfloor} \omega) >
    \lfloor n(t_1-s_1) \rfloor)\to e^{-(t_1-s_1) \Pi_{\alpha} (J)}
  \]
  as desired.

  Now let $R=\cup_{j=1}^k (s_i, t_i]\times J_i$ with
  $0\le s_1 < t_1<\ldots < s_k < t_k$ and $J_i \in \mathcal{J}$.
  Furthermore, define $s_i'=s_i-s_1$ and $t_i'=t_i-s_1$.

  Observe that, accounting for the rounding errors when taking integer parts as for \eqref{eqn:rounding}, we get
  \begin{multline}\label{eq.rounding}
    \left|\nu^{\omega} \left(N_n^{\omega} \left(\bigcup_{i=1}^k (s_i,t_i]\times
        J_i\right)=0\right) -\nu^{\sigma^{\lfloor n s_1 \rfloor}\omega} \left(N_n^{\sigma^{\lfloor n
            s_1 \rfloor}\omega} \left(\bigcup_{i=1}^k (s'_i,t'_i]\times
        J_i\right)=0\right)\right| \\ \le 2C \sum_{i=1}^k m (\phistar^{-1}(b_n J_i)) \to 0
  \end{multline}
  so, after replacing $\omega$ by $\sigma^{\lfloor n
    s_1 \rfloor}\omega$, we can assume that $s_1=0$. Let
\begin{align*}
  &R_1=(0,t_1] \times J_1 \\
&R_2=\bigcup_{i=2}^k (s_i,t_i]\times J_i\\
  &R'_2=\bigcup_{i=2}^k (s_i-s_2,t_i-s_2]\times J_i
\end{align*}


Then, with $A_n =\phistar^{-1} (b_n J_1)$,
\begin{equation} \label{eq.mixing2}
  \left|\nu^{\eta} \left(N_n^{\eta} \left(R_1\cup R_2\right)=0\right)-\nu^{\eta}
    \left[\left\{R_{A_n}(\eta)> \lfloor nt_1 \rfloor\right\}\cap T_{\eta}^{-\lfloor ns_2\rfloor}
      \left(N_n^{\sigma^{ \lfloor ns_2\rfloor}\eta}(R'_2)=0\right)\right]\right|\to 0
\end{equation}
as $n\to \infty$, uniformly in $\eta \in \Omega$, as in~\eqref{eq.rounding}. Moreover,
as we check below,
\begin{multline}\label{eq.mixing}
\Big\vert\nu^{\eta} \left[\{R_{A_n}(\eta)> \lfloor nt_1 \rfloor\} \cap T_{\eta}^{-\lfloor ns_2 \rfloor} \left(N_n^{\sigma^{\lfloor ns_2\rfloor}\eta}(R'_2)=0\right)\right] \\
- \nu^{\eta} (R_{A_n}(\eta)> \lfloor nt_1 \rfloor)\cdot \nu^{\eta} (N_n^{\eta}(R_2)=0) \Big\vert\to 0
\end{multline}
as $n\to\infty$, uniformly in $\eta \in \Omega$. Therefore, setting $\eta = \sigma^{\lfloor n s_2 \rfloor} \omega$ in \eqref{eq.mixing2} and \eqref{eq.mixing}, we have, by
Theorem~\ref{thm:return},
\[
  \lim_{n\to\infty} \left| \nu^{\sigma^{\lfloor n s_2 \rfloor} \omega} (N_n^{\sigma^{\lfloor n s_2 \rfloor} \omega}(R_1 \cup R_2)=0) - e^{-t_1
      \Pi_{\alpha} (J_1)}\nu^{\sigma^{\lfloor n s_2 \rfloor} \omega} (N_n^{\sigma^{\lfloor n s_2 \rfloor} \omega}(R_2)=0) \right| = 0
\]
which gives the induction step
in the proof of (a).

We prove now \eqref{eq.mixing}. Our proof uses the spectral gap for
$P^{n}_{\omega}$ and breaks down for random intermittent maps.


Similarly to~\eqref{eq.rounding},
\[
  \left| \nu^\eta(N_n^{\eta} (R_2)=0) - \nu^\eta(T_{\eta}^{-\lfloor ns_2 \rfloor}
    (N_n^{\sigma^{\lfloor ns_2 \rfloor}\eta}(R'_2)=0)) \right| \to 0 \text{ as
    $n\to\infty$, uniformly in $\eta$.}
\]
We have, using the notation
\[
  U=\left\{R_{A_n}(\eta)> \lfloor nt_1 \rfloor\right\}, \qquad
  V=\left\{N_n^{\sigma^{\lfloor ns_2 \rfloor}\eta}(R'_2)=0\right\},
\]
that
\[
  \begin{aligned}\nonumber
     \Big\vert \nu^{\eta} \left( U \cap T_{\eta}^{-\lfloor ns_2 \rfloor} (V)\right) & - \nu^{\eta} (U)
    \nu^{\eta}\left(T_{\eta}^{-\lfloor ns_2 \rfloor}(V)\right)\Big\vert\\
& = \left|\int P_\eta^{ \lfloor n
      s_2 \rfloor}\left((\one_U-\nu^\eta(U))h_\eta\right) \one_V d m\right|
    \\
    & \le C \left\| P_{\eta}^{\lfloor n s_2 \rfloor}\left((\one_U - \nu^\eta(U))h_\eta\right)\right\|_{BV} \\
    &= \left\| P_{\sigma^{\lfloor n t_1 \rfloor}\eta}^{\lfloor n s_2\rfloor - \lfloor n t_1 \rfloor}P_{\eta}^{\lfloor n t_1\rfloor}\left((\one_U
    - \nu^\eta(U))h_\eta\right)\right\|_{\rm BV}
    \\
    & \le C\theta^{\lfloor n s_2\rfloor-\lfloor n t_1\rfloor} \left\| P_{\eta}^{\lfloor n t_1 \rfloor}\left(( \one_{U} - \nu^\eta(U))h_\eta\right)\right\|_{\rm BV}
  \end{aligned}
\]
where the last inequality follows from the decay, uniform in $\eta$, of
$\{P_{\eta}^k\}_{k}$ in BV (condition {\bf (Dec)}).


But
\begin{equation}\label{eq.bound-Pn}
  \sup_\eta \sup_n \left\|P_{\eta}^{\lfloor nt_1 \rfloor} \left((\one_{\left\{R_{A_n}(\eta)> \lfloor nt_1 \rfloor\right\}} -
  \nu^\eta({R_{A_n}(\eta)> \lfloor nt_1 \rfloor}))h_\eta\right)
 \right \|_{\rm BV} < \infty,
\end{equation}
which proves~\eqref{eq.mixing}. This follows from Lemma~\ref{th.bound-Pn}
below applied to $f = h_\eta$ and $f_j = \one_{A_n^c}$, because
\begin{equation*}
  \one_{\left\{R_{A_n}(\eta)> \lfloor nt_1 \rfloor \right\}} =\prod_{j=1}^{\lfloor nt_1 \rfloor}\one_{A_n^c} \circ T_{\eta}^j,
\end{equation*}
and both $\|h_\eta\|_{\rm BV}$ and $\| \one_{A_n^c}\|_{\rm BV}$ are uniformly bounded.
Note that for the stationary case the estimate~\eqref{eq.bound-Pn} is used
in the proof of~\cite[Theorem 4.4]{TK-dynamical}, which refers
to~\cite[Proposition 4]{ADSZ04}. \end{proof}

 \subsection{Intermittent maps: proof of Theorem~\ref{intermittent_poisson}}

We prove a weaker form of convergence in the setting of intermittent maps, which 
suffices to establish stable limit laws but not functional limit laws. 

 In the setting of intermittent maps,  we will show that for  $\bP$-a.e.~$\omega$,
  \[
    N^{\omega}_n ((0,1]\times \cdot ) \dto N_{(\alpha)} ((0,1] \times \cdot
    )
  \]

\begin{proof}[Proof of Theorem~\ref{intermittent_poisson}]

  We will show that for $\bP$-a.e.~$\omega \in \Omega$,
  the assumptions of Kallenberg's theorem \cite[Theorem 4.7]{Kal75}
  hold.

    Recall that $\mathcal{J}$ denotes the set of all finite unions of
    intervals of the form $(x,y]$ where $x <y$ and $0\not \in [x,y]$.

    By Kallenberg's theorem \cite[Theorem 4.7]{Kal75},
    $N^{\omega}_n[(0,1]\times \cdot) \to^{d} N_{(\alpha)} ((0,1] \times
    \cdot )$ if for all $J \in \mathcal{J}$,
  \[
    (a) \lim_{n\to \infty} \nu^{\omega} (N_n^{\omega} ((0,1] \times J)=0)=\bP
    (N_{(\alpha)} ((0,1]\times J)=0)
  \]
  and
  \[
    (b) \lim_{n\to \infty} \bE_{\nu^{\omega}} N^{\omega}_n ((0,1]\times J)=
    \bE[N_{(\alpha)} ((0,1]\times J)]
  \]

  We prove first $(b)$ following \cite[page 12]{TK-dynamical}.
  Write
  \[
    J=\bigcup_{i=1}^k J_i
  \]
  with $J_i=(x_i,y_i]$ disjoint.
  
  Then
  \[
    \bE N_{(\alpha)} ((0,1]\times J)=\sum_{i=1}^k \Pi_{\alpha}
    (J_i)=\Pi_{\alpha} (J)
  \]
  and
  \[
    \bE_{\nu^{\omega}} N_n^{\omega} ((0,1]\times J)= \sum_{i=1}^k
    \sum_{j=1}^{n} \bE_{\nu^{\omega}} [ \one_{(\phistar^{-1} (b_n
      J_i))}\circ T_{\omega}^{j-1}] = \sum_{j=1}^{n} \bE_{\nu^{\omega}} [
    \one_{(\phistar^{-1} (b_n J))}\circ T_{\omega}^{j-1}]
  \]
  We check that
  \[
    \lim_{n\to \infty} \sum_{j=1}^{n} \bE_{\nu^{\omega}} \left(
      \one_{\{\phistar^{-1} (b_n J)\}}\circ T_{\omega}^j\right) =
    \Pi_{\alpha} (J)
  \]
  for $J=\cup_{i=1}^k J_i$.

  Write $A_n:=\phistar^{-1} (b_n J)$.
  Then
  \[
    \bE_{\nu^{\omega}} [\one_{(\phi_{x_0}^{-1} (b_n J))}\circ T_{\omega}^j ] =\nu^{\sigma^j\omega}(A_n)
  \]
  hence
  \[
    \lim_{n\to \infty} \sum_{j=1}^{n} \bE_{\nu^{\omega}} [ \one_{(\phistar^{-1} (b_n
      J_i))}\circ T_{\omega}^j (x)] = \Pi_{\alpha} (J)
  \]
  by Lemma~\ref{lemma:tech_intermittent}.
  
  Now we prove (a), i.e. 
  \[
    \lim_{n\to \infty} \nu^{\omega} (N_n^{\omega} ((0,1] \times
    J)=0)=P(N_{(\alpha)} ((0,1]\times J)=0)
  \]
  for all $J \in \mathcal{J}$.

  Let $J \in \mathcal{J}$ and denote as above
  $A_n := \phistar^{-1} (b_n J)\subset X =[0,1]$. Then
  \[
    \{N_{n}^{\omega} ((0,1]\times J)=0\}=\{ x: T_{\omega}^{j} (x) \not \in
    A_n, 0 < j+1 \le n\} = \{R_{A_n}(\omega) > n-1\}\cap A^c_n
  \]
  Hence
  \[
    | \nu^{\omega} (N_n^\omega ((0,1]\times J)=0) -\nu^{\omega}
    (R_{A_n}(\omega) > n)|    \le C m(A_n) \to 0
  \]
  and by Theorem~\ref{thm:return_intermittent}, for $m$-a.e.~$x_0$
  \[
    \nu^{ \omega} (R_{A_n}(\omega) > n) \to e^{- \Pi_{\alpha} (J)}.
  \]
  This proves (a).
\end{proof}


\section{Stable laws and functional limit laws}\label{ssec:proof}

\subsection{Uniformly expanding maps: proof of Theorem~\ref{thm:expanding}} \label{ssec:proof_unif}

In this section, we prove  Theorem~\ref{thm:expanding}, under the conditions given in Section~\ref{sec:expanding_iid}, in particular {\bf (LY)}, {\bf (Dec)} and {\bf (Min)}.

For this purpose, we consider first some technical lemmas regarding short returns.  For $\omega \in \Omega$, $n \ge 1$ and $\epsilon >0$, let
\[
\cE_n^\omega(\epsilon)= \left\{ x \in [0,1] \, : \, |T_\omega^n(x) - x | \le \epsilon \right\}.
\]

\begin{lem} \label{lem:short_returns} There exists $C >0$ such that for all $\omega \in \Omega$, $n \ge 1$ and $\epsilon >0$,
\[
m(\cE_n^\omega(\epsilon)) \le C \epsilon.
\]
\end{lem}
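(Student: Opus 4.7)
\textbf{Plan.} The set $\cE_n^\omega(\epsilon)$ is organized naturally by the partition $\cA_\omega^n$ of monotonicity of $T_\omega^n$. On each branch $I \in \cA_\omega^n$, the map $T_\omega^n|_I$ is a strictly monotone $C^2$ diffeomorphism onto its image with $|(T_\omega^n)'| \ge \lambda^n$. The key observation is that the function
\[
f_I(x) := T_\omega^n(x) - x, \qquad x \in I,
\]
is therefore strictly monotone on $I$ with $|f_I'(x)| \ge \lambda^n - 1 \ge \lambda - 1 > 0$, so the set $I \cap \cE_n^\omega(\epsilon) = \{x \in I : |f_I(x)| \le \epsilon\}$ is an interval of length at most $\tfrac{2\epsilon}{\lambda^n - 1} \le \tfrac{2\epsilon}{\lambda - 1}$.

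The next step is to sum these per-branch estimates uniformly in $n$, which is delicate because $\#\cA_\omega^n$ can grow faster than $\lambda^n$. I would distinguish \emph{full} branches (those with $T_\omega^n(I) = [0,1]$) from \emph{non-full} branches. For a full branch, the expansion $|(T_\omega^n)'| \ge \lambda^n$ forces $m(I) \le \lambda^{-n}$, so the number of full branches is at most $\lambda^n$; their total contribution is therefore bounded by $\lambda^n \cdot \tfrac{2\epsilon}{\lambda^n - 1} \le \tfrac{2\lambda}{\lambda - 1}\,\epsilon$, uniformly in $n$.

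For the non-full branches, which lie near the discontinuities of $T_\omega^n$, I would change variables $y = T_\omega^n(x)$ and use the bounded distortion that follows from {\bf (LY)}, giving $|\psi_I'(y)| \le c\, m(I)/m(T_\omega^n(I))$ for $y \in T_\omega^n(I)$, where $\psi_I$ is the local inverse. Combined with the bound $\big|\tfrac{d}{dy}(y-\psi_I(y))\big| \ge 1-\lambda^{-n}$, this yields a contribution proportional to $m(I)$, and summing over non-full branches together with $\sum_I m(I) \le 1$ produces $O(\epsilon)$.

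The hardest part will be the clean treatment of the non-full branches, whose number and arrangement depend delicately on the random iteration and the discontinuities accumulated from the maps $T_{\sigma^j \omega}$, $j<n$. As a robust fallback I would use the spectral gap from {\bf (Dec)}: partition $[0,1]$ into cells $J_k$ of length $\epsilon$, observe that $\cE_n^\omega(\epsilon) \cap J_k \subset \{x \in J_k : T_\omega^n(x) \in J_k^{(\epsilon)}\}$ where $J_k^{(\epsilon)}$ denotes the $\epsilon$-fattening of $J_k$, and bound
\[
m(\cE_n^\omega(\epsilon) \cap J_k) \le \int_{J_k^{(\epsilon)}} P_\omega^n \one_{J_k}\, dm.
\]
Decomposing $\one_{J_k} = \epsilon \one + (\one_{J_k} - \epsilon \one)$, the first piece contributes $O(\epsilon^2)$ per cell since $\|P_\omega^n\one\|_\infty$ is uniformly bounded by iterated {\bf (LY)}, while the mean-zero remainder satisfies $\|P_\omega^n(\one_{J_k} - \epsilon\one)\|_{\rm BV} \le C\theta^n$ by {\bf (Dec)} and thus contributes $O(\epsilon\theta^n)$ per cell. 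Summing over the $O(1/\epsilon)$ cells yields $m(\cE_n^\omega(\epsilon)) \le C(\epsilon + \theta^n)$, which combined with the direct per-branch estimate (which is $O(\epsilon)$ in any fixed range of $n$) delivers the uniform bound $C\epsilon$ for all $n \ge 1$.
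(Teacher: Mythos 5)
Your starting point---reduce to one branch $I\in\cA_\omega^n$ of $T_\omega^n$ and note that $x\mapsto T_\omega^n(x)-x$ is strictly monotone there---is exactly the paper's, and it is the right one. The divergence comes in how you sum over branches, and your two attempts both have gaps.

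Your per-branch bound $m(I\cap\cE_n^\omega(\epsilon))\le\frac{2\epsilon}{\lambda^n-1}$ is uniform over branches, which is why you are forced into the full/non-full split and then a distortion argument for the non-full pieces that you yourself flag as incomplete. The paper instead applies the mean value theorem on $I$ directly and obtains the \emph{branch-dependent} bound
\[
m\bigl(I\cap\cE_n^\omega(\epsilon)\bigr)\le \frac{2\epsilon}{1-\lambda^{-n}}\,\sup_I\frac{1}{|(T_\omega^n)'|},
\]
which is summable in one stroke because of the uniform distortion estimate $\sum_{I\in\cA_\omega^n}\sup_I\tfrac{1}{|(T_\omega^n)'|}\le C$ for a $C$ independent of $n$ and $\omega$ (a standard consequence of the $C^2$, piecewise-expanding structure; the paper cites \cite{ANV15} and \cite{AR16}). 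Notice that this single estimate makes no distinction between full and non-full branches, and it is precisely the ingredient your first argument is reaching for but does not nail down. The obstruction to your cruder bound is that $\#\cA_\omega^n$ typically grows like $M^n$ with $M>\lambda$, so $\#\cA_\omega^n\cdot\frac{2\epsilon}{\lambda^n-1}$ is not $O(\epsilon)$ uniformly in $n$.

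The spectral-gap fallback does not close the gap. Your computation is correct as far as it goes and yields $m(\cE_n^\omega(\epsilon))\le C(\epsilon+\theta^n)$, but this is only $O(\epsilon)$ when $n\gtrsim\log(1/\epsilon)$. You propose to cover the remaining $n$ with the direct per-branch estimate, calling it ``$O(\epsilon)$ in any fixed range of $n$''---but the range of $n$ for which $\theta^n>\epsilon$ is not fixed: it grows like $\log(1/\epsilon)$. Over that range the direct estimate gives at best $(M/\lambda)^{n}\epsilon$, which for $n\sim\log(1/\epsilon)/\log(1/\theta)$ is $\epsilon^{1-\log(M/\lambda)/\log(1/\theta)}$, and this blows up as $\epsilon\to 0$ whenever $M/\lambda>1/\theta$. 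So neither leg of the fallback controls the intermediate $n$'s, and the conclusion ``delivers the uniform bound $C\epsilon$ for all $n\ge 1$'' is not justified. In short: the distortion bound is not an optional refinement here, it is the mechanism that makes the sum over branches converge uniformly in $n$, and there is no clean way to replace it by the BV decay of $P_\omega^n$ alone.
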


\begin{proof}
We follow the proof of \cite[Lemma 3.4]{HNT12}, conveniently adapted to our setting of random non-Markov maps. Recall that $\cA_\omega^n$ is the partition of monotonicity associated to the map $T_\omega^n$. Consider $I \in \cA_\omega^n$. Since $\inf_I  |(T_\omega^n)'| \ge \lambda^n > 1$, there exists at most one solution $x_I^\pm \in I$ to the equation 
\begin{equation} \label{eqn:equation}
T_\omega^n(x_I^\pm) = x_I^\pm \pm \epsilon,
\end{equation}
and since there is no sign change of $(T_\omega^n)'$ on $I$, we have 
\begin{equation} \label{eqn:inclusion}
\cE_n^\omega(\epsilon) \cap I \subset [x_I^-, x_I^+].
\end{equation}
We have
\[
T_\omega^n(x_I^+) - T_\omega^n(x_I^-) = x_I^+ - x_I^- + 2 \epsilon,
\]
and by the mean value theorem, 
\[
\left| T_\omega^n(x_I^+) - T_\omega^n(x_I^-) \right| = \left| (T_\omega^n)'(c) \right| \, \left| x_I^+ - x_I^- \right|, \: \text{ for some } c \in I.
\]
Consequently,
\begin{equation} \label{eqn:estimate}
\left| x_I^+ - x_I^- \right| \le \left( \sup_I \frac{1}{|(T_\omega^n)'|} \right) \left[ \left| x_I^+ - x_I^-\right| + 2 \epsilon \right] \le \lambda^{-n} \left| x_I^+ - x_I^-\right| + 2 \epsilon \sup_I \frac{1}{|(T_\omega^n)'|}.
\end{equation}
Note that if there is no solutions to \eqref{eqn:equation}, then the estimate \eqref{eqn:estimate} is actually improved.
Rearranging \eqref{eqn:estimate} and summing over $I\in \cA_\omega^n$, we obtain thanks to \eqref{eqn:inclusion}
\[
m(\cE_n^\omega(\epsilon)) \le \sum_{I \in \cA_\omega^n} \left| x_I^+ - x_I^-\right| \le \frac{2 \epsilon}{1  - \lambda^{-n}} \sum_{I \in \cA_\omega^n} \sup_I \frac{1}{|(T_\omega^n)'|} \le C \epsilon. 
\]
The fact that 
\begin{equation} \label{eqn:distortion}
\sum_{I \in \cA_\omega^n} \sup_I \frac{1}{|(T_\omega^n)'|} \le C
\end{equation}
for a constant $C >0$ independent from $\omega$ and $n$ follows from a standard distortion argument for one-dimensional maps that can be found in the proof of part 3 of \cite[Lemma 8.5]{ANV15} (see also \cite[Lemma 7]{AR16}), where finitely many piecewise $C^2$ uniformly expanding maps with finitely many discontinuities are also considered. Since it follows from {\bf (LY)} that $\| P_\omega^n f \|_{\rm BV} \le C \|f\|_{\rm BV}$ for some uniform $C >0$, we do not have to average \eqref{eqn:distortion} over $\omega$ as in \cite{ANV15}, but instead we can simply have an estimate that holds uniformly in $\omega$.
\end{proof}

Recall that, for a measurable subset $U$, $R_U^\omega(x) \ge 1$ is the hitting time of $(\omega, x)$ to $U$ defined by \eqref{eqn:hitting_times}.

\begin{lem} \label{lem:maximal}
Let $a>0$, $\frac 2 3 < \psi < 1$ and $0 < \kappa < 3 \psi - 2$. Then there exist sequences $(\gamma_1(n))_{n \ge 1}$ and $(\gamma_2(n))_{n \ge 1}$ with $\gamma_1(n) = \cO(n^{- \kappa})$ and $\gamma_2(n) = o(1)$, and for all $\omega \in \Omega$, a sequence of measurable subsets $(A_n^\omega)_{n \ge 1}$ of $[0,1]$ with $m(A_n^\omega) \le \gamma_1(n)$ and such that for all $x_0 \notin A_n^\omega$,
\[
(\log n) \sum_{i=0}^{n-1} m\left( B_{n^{-\psi}}(x_0) \cap \left\{R_{ B_{n^{-\psi}}(x_0)}^{\sigma^i \omega} \le \lfloor a \log n \rfloor \right\}\right) \le \gamma_2(n).
\]
\end{lem}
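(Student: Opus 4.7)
The plan is to reduce the short-return event inside the displayed sum to the ``almost-fixed-point'' sets $\mathcal{E}_k^\omega(\cdot)$ controlled by Lemma~\ref{lem:short_returns}, integrate over $x_0$ by Fubini, and then apply Markov's inequality to define the bad set $A_n^\omega$. Write $K := \lfloor a \log n \rfloor$, $\epsilon := n^{-\psi}$, and let $F_n(x_0, \omega)$ denote the left-hand side of the displayed inequality in the statement. The key pointwise observation is: if $x \in B_\epsilon(x_0)$ and $T_{\sigma^i\omega}^k(x) \in B_\epsilon(x_0)$ for some $1 \le k \le K$, then $|T_{\sigma^i\omega}^k(x) - x| \le 2\epsilon$, so $x \in \mathcal{E}_k^{\sigma^i\omega}(2\epsilon)$. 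Consequently
\[
B_\epsilon(x_0) \cap \{R^{\sigma^i\omega}_{B_\epsilon(x_0)} \le K\} \;\subset\; \bigcup_{k=1}^K \bigl(B_\epsilon(x_0) \cap \mathcal{E}_k^{\sigma^i\omega}(2\epsilon)\bigr).
\]

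Averaging in $x_0$ by Fubini, for every $i, k, \omega$,
\[
\int_0^1 m\bigl(B_\epsilon(x_0) \cap \mathcal{E}_k^{\sigma^i\omega}(2\epsilon)\bigr)\, dx_0 = \int \one_{\mathcal{E}_k^{\sigma^i\omega}(2\epsilon)}(x)\, m(B_\epsilon(x))\, dx \le 2\epsilon \cdot m\bigl(\mathcal{E}_k^{\sigma^i\omega}(2\epsilon)\bigr) \le C \epsilon^2,
\]
where the last inequality is Lemma~\ref{lem:short_returns}, whose constant $C$ is uniform in $\omega$ and $k$. Summing over $0 \le i \le n-1$ and $1 \le k \le K$ and multiplying by $\log n$,
\[
\int_0^1 F_n(x_0, \omega)\, dx_0 \;\le\; C a (\log n)^2 \, n^{1 - 2\psi}.
\]

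Since $\psi < 1$ forces $3\psi - 2 < 2\psi - 1$, the assumption $\kappa < 3\psi - 2$ gives $2\psi - 1 - \kappa > 0$; fix $\delta := (2\psi - 1 - \kappa)/2 > 0$ and set $\gamma_2(n) := n^{-\delta}$, so $\gamma_2(n) = o(1)$. Defining $A_n^\omega := \{x_0 \in [0,1] : F_n(x_0, \omega) > \gamma_2(n)\}$, Markov's inequality yields
\[
m(A_n^\omega) \;\le\; \frac{\int_0^1 F_n(x_0, \omega)\, dx_0}{\gamma_2(n)} \;\le\; C a (\log n)^2 \, n^{1 - 2\psi + \delta} \;=\; C a (\log n)^2 \, n^{-\kappa - \delta} \;=:\; \gamma_1(n),
\]
so $\gamma_1(n) = \mathcal{O}(n^{-\kappa})$, the logarithmic factor being absorbed by the strict power gain $n^{-\delta}$. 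Both $\gamma_1$ and $\gamma_2$ depend only on $n$, while $A_n^\omega$ depends on $\omega$ only through $F_n$, as required.

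The substantive input is Lemma~\ref{lem:short_returns}, which uses uniform expansion and the distortion bound $\sum_{I \in \cA_\omega^n} \sup_I |(T_\omega^n)'|^{-1} \le C$ uniformly in $\omega$; once that is in hand, the rest is routine Fubini and Markov, and the only real care lies in the exponent arithmetic. The stated range $\kappa < 3\psi - 2$ is comfortably inside the range $\kappa < 2\psi - 1$ that the argument naturally produces, leaving a safety margin $\delta$ with which to absorb logarithmic factors.
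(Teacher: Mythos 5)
Your proof is correct, but it takes a genuinely different and more elementary route than the paper's. Both arguments begin with the same reduction: if $x$ and $T^k_{\sigma^i\omega}(x)$ both lie in $B_{n^{-\psi}}(x_0)$ then $x\in\cE_k^{\sigma^i\omega}(2n^{-\psi})$, so everything rests on Lemma~\ref{lem:short_returns}. From there the paper introduces the Hardy--Littlewood maximal function $M_n^\omega$, invokes the weak-$(1,1)$ maximal inequality, and defines the bad set $A_n^\omega$ through an auxiliary family $F_n^\omega$ together with a two-parameter optimization over $\rho$ and $\xi$; it is precisely this maximal-function route that produces the threshold $\kappa<3\psi-2$ (and hence, downstream in Lemma~\ref{lem:tech_short}, the requirement $\psi>3/4$). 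You instead compute $\int_0^1 F_n(x_0,\omega)\,dx_0$ directly by Fubini, getting the bound $C a(\log n)^2 n^{1-2\psi}$, and then define $A_n^\omega$ by Markov's inequality at level $\gamma_2(n)=n^{-\delta}$. Your exponent arithmetic is right ($1-2\psi+\delta=-\kappa-\delta$ with $\delta=(2\psi-1-\kappa)/2>0$ since $\kappa<3\psi-2<2\psi-1$), the logarithms are absorbed by the strict power gain, and $A_n^\omega$ is measurable since $x_0\mapsto m(B_\epsilon(x_0)\cap S)$ is continuous. What your approach buys is simplicity and a strictly larger admissible range, $\kappa<2\psi-1$ with only $\psi>1/2$ needed; what the paper's approach buys is, arguably, nothing extra here --- the maximal-function machinery is heavier and yields a weaker range --- though the averaged/Markov construction makes the bad set depend on the whole sum over $i$ rather than on a level set of a maximal function, which is immaterial for the application.
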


\begin{proof}
Let
\[
E_n^\omega = \left\{ x \in [0,1] \, : \, |T_\omega^j(x) - x | \le 2 n^{- \psi} \text{ for some } 0 < j \le  \lfloor a \log n \rfloor \right\}.
\]

Since $ B_{n^{-\psi}}(x_0) \cap \left\{R_{ B_{n^{-\psi}}(x_0)}^{\sigma^i \omega} \le \lfloor a \log n \rfloor \right\} \subset B_{n^{-\psi}}(x_0) \cap E_n^{\sigma^i \omega}$, it is enough to consider
\[
(\log n) \sum_{i=0}^{n-1} m\left( B_{n^{-\psi}}(x_0) \cap E_n^{\sigma^i \omega}\right).
\]

According to Lemma~\ref{lem:short_returns}, we have
\[
m(E_n^\omega) \le \sum_{j=1}^{\lfloor a \log n \rfloor } m\left( \cE_j^\omega(2n^{-\psi})\right) \le C \frac{\log n}{n^\psi}.
\]

We introduce the maximal function
\[
M_n^\omega(x_0) = \sup_{t > 0} \frac{1}{2t} \int_{x_0 - t}^{x_0+t} \left( \sum_{i=0}^{n-1} \mathds{1}_{E_n^{\sigma^i \omega}}(z) \right) dz = \sup_{t > 0} \frac{1}{2t} \sum_{i=0}^{n-1} m\left( B_t(x_0) \cap E_n^{\sigma^i \omega}\right)
\]

By \cite[Equation (5) page 138]{Rud}, for all $\lambda > 0$, we have
\begin{equation} \label{eqn:maximal}
m(M_n^\omega > \lambda) \le \frac{C}{\lambda} \left\| \sum_{i=0}^{n-1} \mathds{1}_{E_n^{\sigma^i \omega}} \right\|_{L^1_m} \le \frac{C}{\lambda} \sum_{i=0}^{n-1} m(E_n^{\sigma^i \omega}) \le \frac{C}{\lambda} \frac{\log n}{n^{\psi - 1}}
\end{equation}

Let $\rho>0$ and $\xi >0$ to be determined later. We define
\[
F_n^\omega = \left\{ x_0 \in [0,1] \, : \, m\left( B_{n^{-\psi}}(x_0) \cap E_n^\omega \right) \ge 2n^{-\psi(1+\rho)} \right\}, 
\]
so that we have
\[
\sum_{i=0}^{n-1} m\left( B_{n^{-\psi}}(x_0) \cap E_n^{\sigma^i \omega}\right) \ge \left( \sum_{i=0}^{n-1} \mathds{1}_{F_n^{\sigma^i \omega}}(x_0)\right) 2 n^{-\psi(1 + \rho)}.
\]

By definition of the maximal function $M_n^\omega$, this implies that
\[
M_n^\omega(x_0) \ge n^{-\psi \rho} \left( \sum_{i=0}^{n-1} \mathds{1}_{F_n^{\sigma^i \omega}}(x_0) \right),
\]
from which it follows, by \eqref{eqn:maximal} with $\lambda = ( \log n) n^{\xi - \psi \rho}$,
\[
m \left(A_n^\omega \right) \le m \left( M_n^\omega > (\log n) n^{\xi - \psi \rho} \right) \le C n^{-(\xi + (1 - \rho)\psi -1)} =: \gamma_1(n),
\]
where 
\[
A_n^\omega = \left\{ \left( \sum_{i=0}^{n-1} \mathds{1}_{F_n^{\sigma^i \omega}} \right) > (\log n) n^\xi \right\}.
\]

If $x_0 \notin A_n^\omega$, then
\begin{align*}
(\log n) \sum_{i=0}^{n-1} m\left( B_{n^{-\psi}}(x_0) \cap E_n^{\sigma^i \omega}\right) & \le (\log n) \left( \sum_{i=0}^{n-1} \mathds{1}_{F_n^{\sigma^i \omega}}(x_0)\right) m(B_{n^{-\psi}}(x_0)) + 2 (\log n) n^{1-\psi(1+\rho)} \\
& \le C (\log n) \left( (\log n) n^{-(\psi - \xi)} + n^{-(\psi(1+\rho) - 1)} \right) =: \gamma_2(n).
\end{align*}

Since $\frac{2}{3} < \psi < 1$ and $0 < \kappa < 3 \psi - 2$, it is possible to choose $\rho >0$ and $\xi>0$ such that $\kappa = \xi + (1-\rho)\psi - 1$, $\psi > \xi$ and $\psi(1+\rho) >1$ \footnote{For instance, take $\xi = \psi - \delta$ and $\rho = \psi^{-1} - 1 + \delta \psi^{-1}$ with $\delta = \frac{3 \psi - 2 - \kappa}{2}$.}, which concludes the proof.
\end{proof}

\begin{lem} \label{lem:tech_short}
Suppose that $a >0$ and $\frac 3 4 < \psi < 1$. Then for $m$-a.e.~$x_0 \in [0,1]$ and $\bP$-a.e.~$\omega \in \Omega$ and , we have
\[
\lim_{n \to \infty} (\log n) \sum_{i=0}^{n-1} m\left( B_{n^{-\psi}}(x_0) \cap \left\{R_{ B_{n^{-\psi}}(x_0)}^{\sigma^i \omega} \le \lfloor a \log n \rfloor \right\}\right) = 0.
\]
\end{lem}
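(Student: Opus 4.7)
The plan is to combine Lemma~\ref{lem:maximal} with a Borel--Cantelli argument along a suitable subsequence, together with a monotonicity-based interpolation. Since $\psi > 3/4$, the interval $\bigl(1/(3\psi-2),\, 1/(1-\psi)\bigr)$ is non-empty; I fix some $\beta$ in it, together with some $\kappa$ satisfying $1/\beta < \kappa < 3\psi - 2$, and set $n_k := \lfloor k^\beta \rfloor$. Applying Lemma~\ref{lem:maximal} with the parameter $a$ replaced by $2a$ produces, for each $\omega$, a set $A_n^\omega \subset [0,1]$ with $m(A_n^\omega) = \cO(n^{-\kappa})$ uniformly in $\omega$ and a sequence $\gamma_2(n) = o(1)$ such that for every $x_0 \notin A_n^\omega$,
\[
(\log n) \sum_{i=0}^{n-1} m\bigl( B_{n^{-\psi}}(x_0) \cap \{R^{\sigma^i \omega}_{B_{n^{-\psi}}(x_0)} \le \lfloor 2a \log n \rfloor\}\bigr) \le \gamma_2(n).
\]

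The first step is a Fubini/Borel--Cantelli argument along $n_k$. Since
\[
\int_\Omega m(A_{n_k}^\omega)\,d\bP(\omega) \le C n_k^{-\kappa} \le C k^{-\beta\kappa}
\]
is summable (by the choice $\beta\kappa > 1$), we deduce that for $(m \times \bP)$-a.e.\ $(x_0, \omega)$ one has $x_0 \notin A_{n_k}^\omega$ for all sufficiently large $k$, and the displayed bound therefore holds at $n = n_k$ with right-hand side $\gamma_2(n_k) \to 0$.

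The second step is interpolation. Fix $n_k \le n \le n_{k+1}$: the inclusion $B_{n^{-\psi}}(x_0) \subset B_{n_k^{-\psi}}(x_0)$ is immediate, and $\log n_{k+1}/\log n_k \to 1$ forces $\lfloor a \log n \rfloor \le \lfloor 2a \log n_k \rfloor$ for $k$ large. Since return times to a larger set are smaller, these two facts combine into
\[
B_{n^{-\psi}}(x_0) \cap \{R^{\sigma^i \omega}_{B_{n^{-\psi}}(x_0)} \le \lfloor a \log n\rfloor\} \subset B_{n_k^{-\psi}}(x_0) \cap \{R^{\sigma^i \omega}_{B_{n_k^{-\psi}}(x_0)} \le \lfloor 2a \log n_k\rfloor\}.
\]
I then split $\sum_{i=0}^{n-1} \le \sum_{i=0}^{n_k-1} + \sum_{i=n_k}^{n_{k+1}-1}$. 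After multiplication by $\log n \le \log n_{k+1} \sim \log n_k$, the first piece is controlled by Lemma~\ref{lem:maximal} applied at $n_k$ (with parameter $2a$), and the second is bounded crudely by $(\log n_{k+1})(n_{k+1} - n_k)\cdot 2 n_k^{-\psi} = \cO\bigl((\log k)\, k^{\beta(1-\psi)-1}\bigr)$, which vanishes because $\beta(1-\psi) < 1$.

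The main difficulty is the simultaneous tightness of the two constraints on $\beta$: summability along the subsequence requires $\beta > 1/(3\psi - 2)$, while decay of the interpolation remainder requires $\beta < 1/(1-\psi)$. Compatibility of these bounds holds if and only if $1-\psi < 3\psi - 2$, i.e.\ $\psi > 3/4$, which is exactly the standing hypothesis and makes the argument just close.
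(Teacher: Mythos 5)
Your proposal is correct, and its skeleton is the same as the paper's: invoke Lemma~\ref{lem:maximal}, run Borel--Cantelli along a polynomial subsequence $n_k=\lfloor k^\beta\rfloor$ using $m(A_{n}^\omega)=\cO(n^{-\kappa})$ with $\kappa<3\psi-2$ and $\beta\kappa>1$, and then pass from the subsequence to the full sequence; the constraint bookkeeping ($\beta>1/(3\psi-2)$ versus $\beta<1/(1-\psi)$, compatible exactly when $\psi>3/4$) is identical to the paper's conditions $\kappa\zeta>1$ and $(1-\psi)\zeta<1$. Where you genuinely diverge is the interpolation step. The paper controls $\sup_{n_k\le n<n_{k+1}}$ by estimating the \emph{difference} between the quantity at $n$ and at $n_k$, which it splits into five terms ${\rm (I)}$--${\rm (V)}$ (change of $\log$, extra indices, shrinking ball in the base set, shrinking ball in the return target, and change of the time cutoff), each bounded by $\cO((\log k)^2k^{-(1-(1-\psi)\zeta)})$. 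You instead \emph{dominate} the quantity at $n$ by the quantity at $n_k$ via the monotone inclusion obtained from $B_{n^{-\psi}}(x_0)\subset B_{n_k^{-\psi}}(x_0)$ and $\lfloor a\log n\rfloor\le\lfloor 2a\log n_k\rfloor$, at the cost of running Lemma~\ref{lem:maximal} with parameter $2a$ (harmless, since that lemma holds for every $a>0$), plus a crude bound on the tail $\sum_{i=n_k}^{n-1}$. This is cleaner and shorter: it replaces four of the paper's five error terms by a single set inclusion, using the elementary fact that hitting times to a larger set are no larger. Both arguments are valid; yours buys brevity, while the paper's difference estimate would also apply in situations where one cannot simply enlarge the target set (e.g.\ if the summand were not monotone in the ball radius or the time cutoff).
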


\begin{proof}
Let $0 < \kappa < 3 \psi - 2$ to be determined later. Consider the sets $(A_n^\omega)_{n \ge 1}$ given by Lemma~\ref{lem:maximal}, with $m(A_n^\omega) \le \gamma_1(n) = \cO(n^{- \kappa})$. Since $\kappa < 1$,  we need to consider a subsequence $(n_k)_{k \ge 1}$ such that $\sum_{k \ge 1} \gamma_1(n_k) < \infty$. For such a subsequence, by the Borel-Cantelli lemma, for $m$-a.e.~$x_0$, there exists $K = K(x_0, \omega)$ such that for all $k \ge K$, $x_0 \notin A_{n_k}^\omega$. Since $\lim_{k \to \infty} \gamma_2(n_k) = 0$, this implies 
\[
\lim_{k \to \infty} (\log n_k) \sum_{i=0}^{n_k - 1} m\left( B_{n_k^{-\psi}}(x_0) \cap \left\{R_{ B_{n_k^{-\psi}}(x_0)}^{\sigma^i \omega} \le \lfloor a \log n_k \rfloor \right\}\right) = 0.
\]
We take $n_k = \lfloor k^\zeta \rfloor$, for some $\zeta >0$ to be determined later. In order to have $\sum_{k \ge 1} \gamma_1(n_k) < \infty$, we need to require that $\kappa \zeta> 1$.
Set $U_n^\omega(x_0) =  B_{n^{-\psi}}(x_0) \cap \left\{R_{ B_{n^{-\psi}}(x_0)}^\omega \le \lfloor a \log n \rfloor \right\}$. To obtain the convergence to $0$ of the whole sequence, we need to prove that
\begin{equation} \label{eqn:subsequence_BC}
\lim_{k \to \infty} \sup_{n_k \le n < n_{k+1}} \left| (\log n) \sum_{i=0}^{n-1} m(U_n^{\sigma^i \omega}(x_0)) - (\log n_k ) \sum_{i=0}^{n_k - 1} m(U_{n_k}^{\sigma^i \omega}(x_0)) \right| = 0.
\end{equation}
For this purpose, we estimate 
\[
\left| (\log n) \sum_{i=0}^{n-1} m(U_n^{\sigma^i \omega}(x_0)) - (\log n_k ) \sum_{i=0}^{n_k - 1} m(U_{n_k}^{\sigma^i \omega}(x_0)) \right| \le {\rm (I)} + {\rm (II)} + {\rm (III)} + {\rm (IV)} + {\rm (V)}.
\]
where
\[
{\rm (I)} = \left| \log n - \log n_k\right| \sum_{i=0}^{n-1} m(U_n^{\sigma^i \omega}(x_0)), \: \:
{\rm (II)} = (\log n_k) \sum_{i=n_k}^{n-1} m(U_n^{\sigma^i \omega}(x_0)),
\]
\[
{\rm (III)} = (\log n_k) \sum_{i=0}^{n_k - 1} \left| m\left(B_{n^{-\psi}}(x_0) \cap \left\{R_{ B_{n^{-\psi}}(x_0)}^{\sigma^i \omega} \le\lfloor  a \log n \rfloor \right\}\right) - m\left(B_{n_k^{-\psi}}(x_0) \cap \left\{R_{ B_{n^{-\psi}}(x_0)}^{\sigma^i \omega} \le \lfloor a \log n \rfloor \right\}\right)\right|,
\]
\[
{\rm (IV)} = (\log n_k) \sum_{i=0}^{n_k - 1} \left|m\left(B_{n_k^{-\psi}}(x_0) \cap \left\{R_{ B_{n^{-\psi}}(x_0)}^{\sigma^i \omega} \le \lfloor a \log n \rfloor \right\}\right) - m\left(B_{n_k^{-\psi}}(x_0) \cap \left\{R_{ B_{n_k^{-\psi}}(x_0)}^{\sigma^i \omega} \le \lfloor a \log n \rfloor \right\}\right) \right|,
\]
\[
{\rm (V)} = (\log n_k) \sum_{i=0}^{n_k - 1} \left| m\left( B_{n_k^{-\psi}}(x_0) \cap \left\{R_{ B_{n_k^{-\psi}}(x_0)}^{\sigma^i \omega} \le \lfloor a \log n \rfloor \right\} \right) - m\left( B_{n_k^{-\psi}}(x_0) \cap \left\{R_{ B_{n_k^{-\psi}}(x_0)}^{\sigma^i \omega} \le \lfloor a \log n_k \rfloor \right\} \right) \right|.
\]
Before proceeding to estimate each term, we note that $|n_{k+1} - n_k| = \cO(k^{-(1- \zeta )})$, $|n_{k+1}^{- \psi} - n_k^{-\psi}| = \cO(k^{-(1 + \zeta \psi )})$, $\left| \log n_{k+1} - \log n_k \right| = \cO(k^{-1})$ and $m(U_n^\omega(x_0)) \le m(B_{n^{-\psi}}(x_0)) = \cO(k^{-\zeta \psi})$.

From these observations, it follows
\[
{\rm (I)} \le C \left| \log n_{k+1} - \log n_k \right| n_{k+1}  k^{- \zeta \psi} \le C k^{-(1-(1 - \psi) \zeta)},
\]
\[
{\rm (II)} \le C (\log n_k) |n_{k+1} - n_k| k^{-\zeta \psi} \le C (\log k) k^{-(1-(1 - \psi) \zeta)},
\]
\[
{\rm (III)} \le C (\log n_k) n_k m(B_{n_k^{-\psi}}(x_0) \setminus B_{n^{-\psi}}(x_0)) \le C (\log n_k) n_k | n_{k+1}^{-\psi} - n_k^{-\psi}| \le C (\log k) k^{-(1 - (1 - \psi) \zeta)},
\]
\begin{align*}
{\rm (IV)} & \le C (\log n_k) \sum_{i=0}^{n_k - 1} m \left(B_{n_k^{-\psi}}(x_0) \cap \left\{ R_{B_{n_k^{-\psi}}(x_0) \setminus B_{n^{-\psi}}(x_0)}^{\sigma^i \omega} \le \lfloor a \log n \rfloor \right\} \right) \\
& \le C (\log n_k) \sum_{i=0}^{n_k - 1} a (\log n) m\left(B_{n_k^{-\psi}}(x_0) \setminus B_{n^{-\psi}}(x_0)\right) \\
& \le C (\log k)^2 k^{-(1 - (1 - \psi) \zeta)}
\end{align*}
and
\begin{align*}
{\rm (V)} & \le C (\log n_k) \sum_{i=0}^{n_k -1} m\left( B_{n_k^{-\psi}}(x_0) \cap \left\{ \lfloor a \log n_k \rfloor < R_{ B_{n_k^{-\psi}}(x_0)}^{\sigma^i \omega} \le \lfloor a \log n \rfloor \right\}\right) \\
& \le C (\log n_k) \sum_{i=0}^{n_k-1 } a \left| \log n_{k+1} - \log n_k \right| m(B_{n_k^{-\psi}}(x_0)) \\
& \le C (\log k) k^{-(1 - (1 - \psi) \zeta)}.
\end{align*}

To obtain \eqref{eqn:subsequence_BC}, it is thus sufficient to choose $\kappa >0$ and $\zeta>0$ such that $\kappa < 3 \psi - 2$, $\kappa \zeta > 1$ and $(1 - \psi) \zeta < 1$, which is possible if $\psi > \frac 3 4$.
\end{proof}
%
%
%
We can now prove the functional convergence to a L\'evy stable process for i.i.d.~uniformly expanding maps.

\begin{proof}[Proof of Theorem~\ref{thm:expanding}]
  We apply Theorem \ref{thm:iid_main}. By Theorem
  \ref{thm:poisson_expanding}, we have $N_n^\omega \dto N_{(\alpha)}$ under
  the probability $\nu^\omega$ for $\bP$-a.e.~$\omega \in \Omega$. It thus
  remains to check that equation \eqref{eqn:ergodicity_iid} holds for $m$-a.e.~$x_0$ when $\alpha \in [1,2)$ to complete the proof. For this purpose,
  we will use a reverse martingale argument from \cite{NTV18} (see also
  \cite[Proposition 13]{AR16}). Because of \eqref{eqn:density_bounded}, it
  is enough to work on the probability space $([0,1], \nu^\omega)$ for
  $\bP$-a.e.~$\omega \in \Omega$. Let $\mathcal{B}$ denote the
  $\sigma$-algebra of Borel sets on $[0,1]$ and
\[
\mathcal{B}_{\omega, k} =(T_{\omega}^k)^{-1}(\mathcal{B})
\]
To simplify notation a bit let 
\[
f_{\omega,j, n} (x)=\phi_{x_0} (x) \one_{\left\{ | \phi_{x_0} | \le \epsilon b_n \right\}}(x)-\bE_{\nu^{\sigma^j \omega}}( \phi_{x_0} \one_{\left\{ | \phi_{x_0} | \le \epsilon b_n\right\}}).
\]
From \eqref{eqn:density_bounded}, it follows that $\bE_m(|f_{\omega, j, n}|) \le C \epsilon b_n$, and from the explicit definition of $\phi$, we can estimate the total variation of $f_{\omega,j,n}$ and obtain the existence of $C > 0$, independent of $\omega$, $\epsilon$, $n$ and $j$, such that
\begin{equation} \label{eqn:bounds_fj}
\|f_{\omega, j,n}\|_{\rm BV} \le C \epsilon b_n.
\end{equation}
We define 
\[
S_{\omega, k, n}:=\sum_{j=0}^{k-1} f_{\omega, j, n}\circ T_{\omega}^j
\]
and
\begin{equation}\label{H_n}
H_{\omega, k, n} \circ T_{\omega}^{n} :=\bE_{\nu^\omega} (S_{\omega, k, n}| \mathcal{B}_{\omega, k})
\end{equation}
Hence $H_{\omega, 1, n}=0$ and an explicit formula for $H_{\omega, k, n}$ is 
\[
H_{\omega, k, n}=\frac{1}{ h_{\sigma^k \omega}} \sum_{j=0}^{k-1} P_{\sigma^j \omega}^{k-j}(f_{\omega, j, n} h_{\sigma^j \omega}).
\]
From the explicit formula, the exponential decay in the BV norm of $P_{\sigma^j \omega}^{n-j}$ from {\bf (Dec)}, \eqref{eqn:density_bounded} and \eqref{eqn:bounds_fj}, we see that $\|H_{\omega, k, n}\|_{\rm BV}\le C \epsilon b_n$, where the 
constant $C$ may be taken as constant over $\omega\in \Omega$.
If we define
\[
M_{\omega, k, n}=S_{\omega, k, n}-H_{\omega, k, n}\circ T_{\omega}^{k}
\]
then the sequence $\{ M_{\omega, k, n}\}_{k \ge 1}$ is a reverse martingale difference for the  decreasing filtration $\mathcal{B}_{\omega, k}=(T_{\omega}^n)^{-1}( \mathcal{B})$
as 
\[
\bE_{\nu^\omega} (M_{\omega, k, n} |\mathcal{B}_{\omega, k})=0
\]
The martingale reverse differences are
\[
M_{\omega, k+1, n}-M_{\omega, k, n}=\psi_{\omega, k, n} \circ T_{\omega}^k
\]
where
\[
\psi_{\omega, k, n}:=f_{\omega, k, n} +H_{\omega, k, n}-H_{\omega, k+1, n}\circ T_{\sigma^{k+1} \omega}.
\]
We see from the $L^{\infty}$ bounds on $\| H_{\omega, k, n}\|_{\infty} \le C b_n \epsilon$  and the telescoping sum that 
\begin{equation} \label{eqn:telescop}
\left|\sum_{j=0}^{k-1} \psi_{\omega, j, n}\circ T_{\omega}^j -\sum_{j=0}^{k-1} f_{\omega, j, n}\circ T_{\omega}^j \right|\le C\epsilon b_n.
\end{equation}
By Doob's martingale maximal  inequality
\[
\nu^\omega \left\{ \max_{1\le k \le n}  \left| \sum_{j=0}^{k-1} \psi_{\omega, j, n}\circ T_{\omega}^j \right|\ge b_n \delta \right\} \le \frac{1}{b_n^2 \delta^2} \bE_{\nu^\omega} \left|\sum_{j=0}^{n-1} \psi_{\omega, j, n} \circ T_{\omega}^j \right|^2.
\]

Note that 
\[
\sum_{j=0}^{n-1} \bE_{\nu^\omega} \left[\psi^2_{\omega, j, n} \circ T_{\omega}^j\right]=\bE_{\nu^\omega} \left[\sum_{j=0}^{n-1} \psi_{\omega, j, n} \circ T_{\omega}^j\right]^{2}
\]
by pairwise orthogonality of martingale reverse differences.

As in \cite[Lemma 6]{HNTV17}
\[
\bE_{\nu^\omega} [(S_{\omega, n, n})^2]=\sum_{j=0}^{n-1} \bE_{\nu^\omega} [\psi^2_{\omega, j, n} \circ T_{\omega}^j]+ \bE_{\nu^\omega}[H_{\omega, 1, n}^2]- \bE_{\nu^\omega}[H_{\omega, n, n}^2\circ T_{\omega}^{n}].
\]
So we see that

\begin{equation} \label{eqn:bound_martingale}
\nu^\omega \left\{ \max_{1\le k \le n} \left| \sum_{j=0}^{k-1} \psi_{\omega, j, n}\circ T_{\omega}^j \right|\ge b_n \delta \right\} \le \frac{1}{b_n^2 \delta^2} \bE_{\nu^\omega} [(S_{\omega, n, n})^2] +2 \frac{C^2 \epsilon^2 }{\delta^2}
\end{equation}
where we have used $\| H^2_{\omega, j, n}\|_{\infty} \le C^2 b_n^2 \epsilon^2$.

Now we estimate
\begin{equation} \label{eqn:moment_Sn}
\bE_{\nu^\omega} [(S_{\omega, n, n})^2]\le \sum_{j=0}^{n-1} \bE_{\nu^\omega}[f^2_{\omega, j, n} \circ T_{\omega}^j]+2\sum_{i=0}^{n-1} \sum_{i <  j} \bE_{ \nu^\omega} [f_{\omega, j, n} \circ T_{\omega}^j \cdot f_{\omega, i, n} \circ T_{\omega}^i].
\end{equation}

Using the equivariance of the measures $\{ \nu^\omega\}_{\omega \in \Omega}$ and \eqref{eqn:density_bounded}, we have
\begin{equation} \label{eqn:moments2}
\sum_{j=0}^{n-1} \bE_{\nu^\omega}[f^2_{\omega, j, n} \circ T_{\omega}^j]  \le C n \bE_\nu(\phi_{x_0}^2 \one_{\left\{  | \phi_{x_0} | \le \epsilon b_n \right\}})  \sim C \epsilon^{2 - \alpha} b_n^2,
\end{equation}
by Proposition \ref{prop:karamata} and that
\begin{equation*}  
  \lim_{n \to \infty} n \, \nu( | \phistar | > \lambda b_n) =
  \lambda^{-\alpha} \text{\quad for $\lambda >0$,}
\end{equation*}
since $\phistar$ is regularly varying.

On the other hand, we are going to show that for $m$-a.e.~$x_0$
\begin{align} \label{eqn:crossed_moments}
\lim_{\epsilon \to 0} \limsup_{n \to \infty} \frac{1}{b_n^2}\sum_{i=0}^{n-1} \sum_{i< j} \bE_{\nu^\omega} [f_{\omega, j, n} \circ T_{\omega}^j \cdot f_{\omega, i, n} \circ T^i_{\omega}] = 0.
\end{align}

The first observation is that, due to condition {\bf (Dec)}, $$\bE_{\nu^\omega} [f_{\omega, j, n} \circ T_{\omega}^j \cdot f_{\omega, i, n} \circ T^i_{\omega}]\le C \theta^{j-i} \|f_{\omega, i, n}\|_{\rm BV} \|f_{\omega, j, n}\|_{L^1_m} \le C \epsilon^2 b_n^2 \theta^{j-i}$$ where $\theta<1$. Hence there exists $a > 0$ independently of $n$ and $\epsilon$ such that 
$$\sum_{j-i> \lfloor a \log n \rfloor}\bE_{\nu^\omega} [f_{\omega, j, n} \circ T_{\omega}^j \cdot f_{\omega, i, n} \circ T^i_{\omega}]\le C \epsilon^2 n^{-2} b_n^2$$ and it is enough to prove that for $\epsilon > 0$,
\[
\sum_{i=0}^{n-1}  \sum_{j = i+1}^{i + \lfloor a \log n \rfloor} \bE_{\nu^\omega} [f_{\omega, j, n} \circ T_{\omega}^j \cdot f_{\omega, i, n} \circ T^i_{\omega}] = o(b_n^2) = o(n^{\frac 2 \alpha}).
\]

By construction, the term $\bE_{\nu^\omega}[f_{\omega, i, n} \circ T_\omega^i \cdot f_{\omega, j, n} \circ T_\omega^j]$ is a covariance, and since $\phi$ is positive, we can bound this quantity by $\bE_{\nu^\omega}[f \circ T_\omega^i \cdot f \circ T_\omega^j] = \bE_{\nu^{\sigma^i \omega}}[f_n \cdot f_n \circ T_{\sigma^i \omega}^{j-i}]$ where $f_n = \phi_{x_0} \mathds{1}_{\left\{| \phi_{x_0}| \le \epsilon b_n\right\}}$.
Then, since the densities are uniformly bounded by \eqref{eqn:density_bounded}, we are left to estimate 
\begin{equation} \label{eqn:sum_covariance}
\sum_{i=0}^{n-1} \sum_{j= i+1}^{i + \lfloor a \log n \rfloor} \bE_m[f_n \cdot f_n \circ T_{\sigma^i \omega}^{j-i}].
\end{equation}

Let $\frac 3 4 < \psi < 1$ and $U_n = B_{n^{-\psi}}(x_0)$.  We bound \eqref{eqn:sum_covariance} by ${\rm (I)} + {\rm (II)} + {\rm (III)}$, where
\[
{\rm (I)} = \sum_{i=0}^{n-1} \sum_{j= i+1}^{i + \lfloor a \log n \rfloor} \int_{U_n \cap (T_{\sigma^i \omega}^{j-i})^{-1}(U_n)} f_n \cdot f_n \circ T_{\sigma^i \omega}^{j-i} dm,
\]
\[
{\rm (II)} = \sum_{i=0}^{n-1} \sum_{j= i+1}^{i + \lfloor a \log n \rfloor} \int_{U_n \cap (T_{\sigma^i \omega}^{j-i})^{-1}(U_n^c)} f_n \cdot f_n \circ T_{\sigma^i \omega}^{j-i} dm
\]
and
\[
{\rm (III)} = \sum_{i=0}^{n-1} \sum_{j= i+1}^{i + \lfloor a \log n \rfloor} \int_{U_n^c} f_n \cdot f_n \circ T_{\sigma^i \omega}^{j-i} dm.
\]

Since $\|f_n\|_\infty \le \epsilon b_n$, it follows that
\begin{align*}
{\rm (I)} &\le \epsilon^2 b_n^2 \sum_{i=0}^{n-1} \sum_{j=i +1}^{i + \lfloor a \log n \rfloor }m\left(U_n \cap (T_{\sigma^i \omega}^{j-i})^{-1}(U_n)\right) \\
& \le a \epsilon^2 b_n^2 (\log n) \sum_{i=0}^{n-1} m\left(U_n \cap \left\{ R_{U_n}^{\sigma^i \omega} \le a \log n \right\} \right),
\end{align*}
which by Lemma~\ref{lem:tech_short} is a $o(b_n^2)$ as $n \to \infty$ for $m$-a.e.~$x_0$.

To estimate (II) and (III), we will use H\"{o}lder's inequality. We first observe by a direct computation that
\begin{equation} \label{eqn:computation}
\int_{U_n^c} \phi_{x_0}^2 dm = \cO(n^{\psi \left(\frac{2}{\alpha} - 1 \right)}).
\end{equation}

We consider (III) first. Let $ A = U_n^c$. We have
\begin{align} \label{eqn:term_III}
\int_{U_n^c} f_n \cdot f_n \circ T_{\sigma^i \omega}^{j-i} dm \le \int_A \phi_{x_0} \cdot f_n \circ T_{\sigma^i \omega}^{j-i} dm & \le \left( \int_A \phi_{x_0}^2 dm \right)^{\frac 1 2} \left( \int f_n^2 \circ T_{\sigma^i \omega}^{j-i} dm \right)^{\frac 1 2} \\
& \le C \left( \int_A \phi_{x_0}^2 dm \right)^{\frac 1 2} \left( \int f_n^2  dm \right)^{\frac 1 2}.
\end{align}
By \eqref{eqn:computation}, $\left( \int_A \phi_{x_0}^2 dm \right)^{\frac 1 2} \le C n^{\frac{\psi}{2} \left(\frac{2}{\alpha} - 1 \right)}$ and by Proposition~\ref{prop:karamata}, $\left( \int f_n^2  dm \right)^{\frac 1 2} \le C n^{\frac{1}{\alpha} - \frac 1 2}$. Hence we may bound
\eqref{eqn:term_III} by $C n^{\left(1+\psi\right)\left(\frac{1}{\alpha} - \frac 1 2\right)}$.

To bound (II), let  $B= U_n \cap (T_{\sigma^i \omega}^{j-i})^{-1}(U_n^c)$. Then, 
\begin{align} \label{eqn:term_II}
\int_{U_n \cap (T_{\sigma^i \omega}^{j-i})^{-1}(U_n^c)} f_n \cdot f_n \circ T_{\sigma^i \omega}^{j-i} dm \le \int_B f_n \cdot \phi_{x_0} \circ T_{\sigma^i \omega}^{j-i} dm \le  \left( \int f_n^2 dm \right)^{\frac 1 2} \left( \int_B \phi_{x_0}^2 \circ T_{\sigma^i \omega}^{j-i}  dm \right)^{\frac 1 2}.
\end{align}
As before $\left( \int f_n^2  dm \right)^{\frac 1 2} \le C n^{\frac{1}{\alpha} - \frac 1 2}$ and
\begin{align*}
\left( \int_B \phi_{x_0}^2 \circ T_{\sigma^i \omega}^{j-i}  dm \right)^{\frac 1 2} & \le \left( \int \phi_{x_0}^2 \circ T_{\sigma^i \omega}^{j-i}  \mathds{1}_{(T_{\sigma^i \omega}^{j-i})^{-1}(U_n^c)} dm \right)^{\frac 1 2} \le C \left( \int_{U_n^c} \phi_{x_0}^2 dm \right)^{\frac 1 2} \le C n^{\frac{\psi}{2} \left(\frac{2}{\alpha} - 1 \right)}
\end{align*}
by \eqref{eqn:computation}, and so \eqref{eqn:term_II} is bounded by $C n^{\left(1+\psi\right)\left(\frac{1}{\alpha} - \frac 1 2\right)}$.

It follows that ${\rm (II)} + {\rm (III)} \le C (\log n) n^{1 + \left(1+\psi\right)\left(\frac{1}{\alpha} - \frac 1 2\right)} = o(n^{\frac 2 \alpha})$, since $ \psi < 1$. This proves that \eqref{eqn:sum_covariance} is a $o(b_n^2)$ and concludes the proof of \eqref{eqn:crossed_moments}.

Finally, from \eqref{eqn:moment_Sn}, \eqref{eqn:moments2} and \eqref{eqn:crossed_moments}, we obtain
\begin{align} \label{eqn:limsup}
\lim_{\epsilon \to 0} \limsup_{n \to \infty} \frac{1}{b_n^2} \bE_{\nu^\omega}[(S_{\omega, n, n})^2] = 0,
\end{align}
which gives the result by taking the limit first in $n$ and then in $\epsilon$ in \eqref{eqn:bound_martingale}.
\end{proof}

%

\subsection{Intermittent maps: proof of Theorem~\ref{thm:intermittent}}

We prove convergence to a stable law in the setting of  intermittent maps when $\alpha \in (0,1)$.

\begin{proof}[Proof of Theorem~\ref{thm:intermittent}]

We apply Proposition~\ref{prop:quenched_stable_usual}. By
Theorem~\ref{intermittent_poisson}, it remains to
prove~\eqref{eqn:seq_levy_01}, since $\alpha \in (0,1)$. We will need an
estimate for
$\bE_{\nu^{ \omega}}( | \phi_{x_0}| \mathds{1}_{\left\{  \phi_{x_0}  \le
    \epsilon b_n\right\}})$ which is independent of $\omega$. For this
purpose, we introduce the absolutely continuous probability measure
$\nu_{\rm max}$ whose density is given by
$h_{\rm max}(x) = \kappa x^{- \gamma_{\rm max}}$. Since all densities
$h_\omega$ belong to the cone $L$, we have that
$h_\omega \le \frac{a}{\kappa} h_{\rm max}$ for all $\omega$. Thus,
\[
  \frac{1}{b_n} \sum_{j=0}^{n-1} \bE_{\nu^{\sigma^j \omega}}( \phi_{x_0}
  \mathds{1}_{\left\{ | \phi_{x_0} | \le \epsilon b_n\right\}}) \le
  \frac{n}{b_n} \frac{a}{\kappa} \bE_{\nu_{\rm max}}(  \phi_{x_0}
  \one_{\left\{ | \phi_{x_0} | \le \epsilon b_n\right\}}).
\]
We can easily verify that $\phi_{x_0}$ is regularly varying of index
$\alpha$ with respect to $\nu_{\rm max}$, with scaling sequence equal to
$(b_n)_{n \ge 1}$ up to a multiplicative constant factor. Consequently, by
Proposition \ref{prop:karamata}, we have that, for some constant $c>0$,
\[
  \bE_{\nu_{\rm max}}( \phi_{x_0} \mathds{1}_{\left\{ | \phi_{x_0} | \le
      \epsilon b_n\right\}}) \sim c \epsilon^{1-\alpha} n^{\frac{1}{\alpha}
    - 1},
\]
which implies \eqref{eqn:seq_levy_01}.  \end{proof}

\section{The annealed case}\label{sec:annealed}

In this section, we consider the annealed counterparts of our results. Even
though the annealed versions do not seem to follow immediately from the
quenched version, it is easy to obtain them from our proofs in the quenched
case. We take $\phi_{x_0} (x)=d(x,x_0)^{-\frac{1}{\alpha}}$ as before we consider
the convergence on the measure space $\Omega \times [0,1]$ with respect to
$\nu_F (d\omega, dx)=\bP (d \omega) \nu^{\omega} (dx)$. We give precise
annealed results in the case of Theorems~\ref{thm:expanding} and~\ref{thm:intermittent}, where we
consider
\[
X^a_n(\omega, x)(t) := \frac{1}{b_n} \left[ \sum_{j=0}^{\lfloor nt \rfloor - 1} \phi_{x_0}(T_\omega^j x) - t c_n\right], \: t \ge 0,
\]
viewed as a random process defined on the probability space
$(\Omega \times [0,1], \nu)$.

\begin{thm}\label{thm:annealed}
  Under the same assumptions as Theorem \ref{thm:expanding}, the random
  process $X^a_n (t)$ converges in the $J_1$ topology to the L\'evy
  $\alpha$-stable process $X_{(\alpha)}(t)$ under the probability measure
  $\nu$.
\end{thm}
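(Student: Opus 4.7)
The plan is to deduce the annealed convergence directly from the quenched convergence established in Theorem~\ref{thm:expanding} via a bounded-convergence argument. Since $\bD[0,\infty)$ endowed with the $J_1$ topology is a Polish space, weak convergence on it is characterized by convergence of expectations against all bounded continuous functionals $F : \bD[0,\infty) \to \bR$. It therefore suffices to show $\bE_{\nu_F}[F(X_n^a)] \to \bE[F(X_{(\alpha)})]$ for every such $F$.

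First, I would exploit the disintegration $\nu_F(d\omega, dx) = \bP(d\omega)\, \nu^\omega(dx)$ together with the identity $X_n^a(\omega, x)(t) = X_n^\omega(x)(t)$ (which is immediate from the definitions) to write
\[
  \bE_{\nu_F}[F(X_n^a)] = \int_\Omega \bE_{\nu^\omega}[F(X_n^\omega)] \, d\bP(\omega).
\]
Next, by Theorem~\ref{thm:expanding}, for $\bP$-a.e.~$\omega \in \Omega$ the quenched process $X_n^\omega$ converges in distribution to $X_{(\alpha)}$ in the $J_1$ topology under $\nu^\omega$. Since $F$ is bounded and continuous on $\bD[0,\infty)$, this yields $\bE_{\nu^\omega}[F(X_n^\omega)] \to \bE[F(X_{(\alpha)})]$ for $\bP$-a.e.~$\omega$. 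The uniform bound $|\bE_{\nu^\omega}[F(X_n^\omega)]| \le \|F\|_\infty$ then allows me to invoke Lebesgue's bounded convergence theorem on $(\Omega, \bP)$, giving
\[
  \bE_{\nu_F}[F(X_n^a)] \longrightarrow \bE[F(X_{(\alpha)})] \qquad \text{as } n \to \infty,
\]
which is the desired annealed functional convergence.

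There is no substantial obstacle in this argument; the entire nontrivial content has already been absorbed into the quenched Theorem~\ref{thm:expanding}. The only point worth emphasizing is that this passage from quenched to annealed limits is completely general and requires no further mixing or decorrelation hypotheses: the integration of a pointwise convergent, uniformly bounded family of quenched expectations against $\bP$ automatically produces the annealed limit. The same bounded-convergence scheme applies \emph{mutatis mutandis} to obtain annealed counterparts of Theorem~\ref{thm:intermittent} and of the Poisson process convergence results of Section~\ref{ssec:cv_poisson}, and also to transfer the conclusion from $\nu_F$ to any measure absolutely continuous with respect to it (in particular Lebesgue measure on $\Omega \times [0,1]$) via the Eagleson-type results alluded to in the introduction.
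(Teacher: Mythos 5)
Your proof is correct, and it takes a genuinely different and more direct route than the paper. The paper re-applies \cite[Theorem 1.2]{TK-dynamical} to the skew product $(\Omega \times [0,1], F, \nu_F)$, verifying the two hypotheses separately: condition (a) is obtained by integrating the quenched Laplace-functional convergence of the point processes from Theorem~\ref{thm:poisson_expanding} over $\bP$ and invoking dominated convergence, while condition (b) is obtained by integrating the martingale and variance estimates in the proof of Theorem~\ref{thm:expanding} (which are uniform in $\omega$). You instead integrate the quenched weak limit itself: since $\bD[0,\infty)$ with the $J_1$ topology is Polish, weak convergence is tested against bounded continuous $F:\bD[0,\infty)\to\bR$, and the identity $X_n^a(\omega,x)=X_n^\omega(x)$ together with the disintegration $\nu_F=\bP\otimes\nu^\omega$ and bounded convergence immediately gives $\bE_{\nu_F}[F(X_n^a)]\to\bE[F(X_{(\alpha)})]$. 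This works precisely because the quenched limit law and the normalizing constants $b_n$, $c_n$ are nonrandom (the same for $\bP$-a.e.\ $\omega$); were the quenched centering $\omega$-dependent, as in the quenched CLT, the implication would fail. Your argument is shorter and makes the quenched-to-annealed passage transparent as a general principle, at the cost of invoking the abstract Portmanteau characterization of weak convergence rather than re-checking the Tyran-Kami\'nska criteria directly. Note that the paper's opening remark that ``the annealed versions do not seem to follow immediately from the quenched version'' is, in light of your argument, overly cautious here: the implication is immediate once one observes that the constants and the limit do not depend on $\omega$.
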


\begin{proof}
  We apply \cite[Theorem 1.2]{TK-dynamical} to the skew-product system
  $(\Omega \times [0,1], F, \nu)$ and the observable $\phi_{x_0}$ naturally
  extended to $\Omega \times [0,1]$. Recall that $\nu$ is given by the
  disintegration $\nu (d \omega, dx) = \bP(d \omega) \nu^\omega(dx)$.

We have to prove that
\begin{enumerate}[(a)]
\item $N_n \dto N_{(\alpha)}$,
\item if $\alpha \in [1, 2)$, for all $\delta > 0$,
\[
  \lim_{\epsilon \to 0} \limsup_{n \to \infty} \nu \left( (\omega, x) \, :
    \, \max_{1 \le k \le n} \left| \frac{1}{b_n} \sum_{j=0}^{k-1} \left[
        \phi_{x_0}(T_\omega^{j}x) \one_{\left\{ | \phi_{x_0} \circ T_\omega^j| \le
            \epsilon b_n \right\}}(x) - \bE_{\nu}( \phi_{x_0} \one_{\left\{ |
            \phi_{x_0} | \le \epsilon b_n\right\}})\right] \right| \ge \delta
  \right) =0,
\]
\end{enumerate}

where
\[
N_n(\omega, x)(B):= N_n^\omega(x)(B) = \# \left\{ j \ge 1 \, : \, \left( \frac{j}{n}, \frac{\phi_{x_0}(T_\omega^{j-1}(x))}{b_n}\right) \in B \right\}, \; n \ge 1.
\]

To prove (a), we take
$f \in C_K^+((0, \infty) \times (\bR \setminus \{ 0 \}))$ arbitrary. Then,
by Theorem~\ref{thm:poisson_expanding}, we have for $\bP$-a.e.~$\omega$
\[
\lim_{n \to \infty} \bE_{\nu^\omega}(e^{- N_n^\omega(f)}) = \bE(e^{- N(f)}).
\]
Integrating with respect to $\bP$ and using the dominated convergence theorem yields 
\[
\lim_{n \to \infty} \bE_{\nu}(e^{-N_n(f)}) = \bE(e^{-N(f)}),
\]
which proves (a). 

To prove (b), we simply have to integrate with respect to $\bP$ in the estimates in the proof of Theorem \ref{thm:expanding}, which hold uniformly in $\omega \in \Omega$, and then to take the limits as $n \to \infty$ and $\epsilon \to 0$.
\end{proof}

Similarly, we have:

\begin{thm} \label{thm:annealed_inter}
Under the same assumptions as Theorem~\ref{thm:intermittent}, $X_n^a(1) \dto X_{(\alpha)}(1)$ under the probability measure $\nu$.
\end{thm}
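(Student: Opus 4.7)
My plan is to apply Theorem~\ref{thm:seq_levy_was-TK-thm1.3} to the skew-product system $(\Omega \times [0,1], F, \nu_F)$ with the observable $\Phi(\omega, x) = \phi_{x_0}(x)$, viewed as a deterministic (autonomous) dynamical system so that $\Phi \circ F^j(\omega, x) = \phi_{x_0}(T_\omega^j x)$. Since $\alpha \in (0,1)$ we have $c_n = 0$ and $c_\alpha(\epsilon) = 0$, so the theorem requires just two inputs: the annealed Poisson convergence on $(0,1] \times \cdot$, and the small-jumps control~\eqref{eqn:seq_levy_SLT} for the measure $\nu_F$.

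For the Poisson convergence, I would argue exactly as in the proof of Theorem~\ref{thm:annealed}. By Theorem~\ref{intermittent_poisson}, for $m$-a.e.~$x_0$ and $\bP$-a.e.~$\omega \in \Omega$ one has $N_n^\omega((0,1] \times \cdot) \dto N_{(\alpha)}((0,1] \times \cdot)$ under $\nu^\omega$. Taking any $f \in C_K^+((0,1] \times (\bR \setminus \{0\}))$, this translates into pointwise convergence of the Laplace functionals $\bE_{\nu^\omega}[e^{-N_n^\omega(f)}] \to \bE[e^{-N_{(\alpha)}(f)}]$ for $\bP$-a.e.~$\omega$. The integrand is bounded by $1$, so the dominated convergence theorem yields
\[
  \lim_{n \to \infty} \bE_{\nu_F}[e^{-N_n(f)}] = \bE[e^{-N_{(\alpha)}(f)}],
\]
from which the weak convergence $N_n((0,1] \times \cdot) \dto N_{(\alpha)}((0,1] \times \cdot)$ under $\nu_F$ follows.

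For~\eqref{eqn:seq_levy_SLT} with $c_n = 0 = c_\alpha(\epsilon)$, since $\phi_{x_0} \ge 0$, Markov's inequality reduces matters to bounding
\[
  \frac{1}{b_n} \bE_{\nu_F}\!\left[ \sum_{j=0}^{n-1} \phi_{x_0}(T_\omega^j x)\,\one_{\{\phi_{x_0}(T_\omega^j x) \le \epsilon b_n\}} \right].
\]
Using the equivariance $(T_\omega^j)_*\nu^\omega = \nu^{\sigma^j \omega}$ to pull each term back to time zero, then using $\sigma$-invariance of $\bP$ to average the index $j$ away, this expectation collapses to
\[
  \frac{n}{b_n}\,\bE_\nu\!\bigl(\phi_{x_0}\,\one_{\{\phi_{x_0} \le \epsilon b_n\}}\bigr),
\]
where $\nu$ is the annealed stationary measure. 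Karamata's estimate Proposition~\ref{prop:karamata}(b), combined with $n\,\nu(\phi_{x_0} > \epsilon b_n) \to \epsilon^{-\alpha}$ from~\eqref{eqn:tail2}, shows that this quantity tends to $\frac{\alpha}{1-\alpha}\epsilon^{1-\alpha}$, which vanishes as $\epsilon \to 0$.

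There is no real obstacle here: the argument is strictly easier than in the quenched case because averaging against $\bP$ and using $\sigma$-invariance immediately reduces every quenched estimate to an annealed estimate on $(\nu, \phi_{x_0})$, and for $\alpha < 1$ Proposition~\ref{prop:karamata}(b) supplies the required bound without any martingale or decorrelation work. The only delicate point, verifying the Poisson convergence at the level of $\nu_F$, is handled by the Laplace-functional/dominated-convergence argument borrowed verbatim from the proof of Theorem~\ref{thm:annealed}.
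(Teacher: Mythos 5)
Your proposal is correct and is essentially the paper's own argument: the paper proves this theorem by checking the hypotheses of the Tyran-Kami\'nska stable-law criterion for the skew product $(\Omega\times[0,1],F,\nu_F)$, obtaining the annealed marginal Poisson convergence from Theorem~\ref{intermittent_poisson} via Laplace functionals and dominated convergence exactly as in Theorem~\ref{thm:annealed}, and handling the small jumps by integrating the quenched estimates over $\bP$. Your small-jumps step, which uses the exact identity $\bE_{\nu_F}[g\circ F^j]=\bE_\nu[g]$ (equivariance plus $\sigma$-invariance) and then Karamata for the annealed measure $\nu$, is a slightly more direct version of the paper's route through the dominating density $h_{\rm max}$ in the proof of Theorem~\ref{thm:intermittent}, but it is the same computation in substance.
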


\begin{proof}
We can proceed as for Theorem~\ref{thm:annealed} in order to check the assumptions of~\cite[Theorem 1.3]{TK-dynamical} for the skew-product system
  $(\Omega \times [0,1], F, \nu)$ and the observable $\phi_{x_0}$.
\end{proof}

\section{Appendix}

The observation that our distributional limit theorems hold for any measures $\mu \ll \nu^{\omega}$ follows
from  Theorem 1, Corollary 1 and Corollary 3 of Zweim\"uller's work~\cite{Zweimuller}. 

Let 
\[
S_n (x)=\frac{1}{b_n} [\sum_{j=0}^{n-1} \phi\circ T_{\omega}^j (x)-a_n ].
\]
and suppose 

\[
S_n \rightarrow_{\nu_{\omega}} Y
\]
where $Y$ is a L\'evy random variable.


We consider first the setup of intermittent maps. We will show
that for any measure $\nu$ with density $h$ i.e. $d\nu=h dm$ in the cone
$L$, in particular Lebesgue measure $m$
with $h=1$,
\[
S_n \rightarrow_{\nu} Y
\]

We focus on $m$. According to \cite[Theorem 1]{Zweimuller} it is enough to show
that
\[
\int \psi (S_n) d\nu_{\omega} -\int \psi (S_n) dm \to 0.
\]
for any $\psi: \bR \rightarrow \bR$ which is bounded and uniformly Lipschitz. 

Fix such a $\psi$ and  consider
\[
\int \psi (\frac{1}{b_n} [\sum_{j=0}^{n-1} \phi\circ T_{\omega}^j (x)-a_n ]) (h_{\omega} -1)dm
\]
\[
\le \int \psi (\frac{1}{b_n} [\sum_{j=0}^{n-1} \phi\circ T_{\sigma^k \omega}^j (x)-a_n ]) P_{\omega}^k (h_{\omega} -1)dm
\]
\[
\le \|\psi\|_{\infty} \|P_{\omega}^k (h_{\omega} -1)\|_{L^{1}(m)}.
\]
Since $\|P_{\omega}^k (h_{\omega} -1)\|_{L^{1}_m}\to 0$ in case of
Example~\ref{intermittent} and maps satisfying (LY), (Dec) and (Min) the
assertion is proved. By~\cite[Corollary 3]{Zweimuller}, the proof for
continuous time distributional limits follows immediately.

\bibliographystyle{alpha} \bibliography{references_combine}

\end{document}